\newcommand{\add}{\mathrm{add}}
\newcommand{\dbmod}{D^b(\text{mod})}
\newcommand{\Z}{\mathbb{Z}}
\newcommand{\N}{\mathbb{N}}
\DeclareMathOperator{\Hom}{Hom} \DeclareMathOperator{\End}{End}
\DeclareMathOperator{\Ext}{Ext} \DeclareMathOperator{\Tor}{Tor}
\DeclareMathOperator{\Ker}{Ker} \DeclareMathOperator{\Img}{Im}
\DeclareMathOperator{\im}{Im}
 \DeclareMathOperator{\pd}{proj.
dim} 
\DeclareMathOperator{\gldim}{gl. dim}
 \DeclareMathOperator{\tr}{tr}
\newcommand{\Acal}{\ensuremath{\mathcal{A}}}
\newcommand{\Dcal}{\ensuremath{\mathcal{D}}}
\newcommand{\Scal}{\ensuremath{\mathcal{S}}}
\newcommand{\Ccal}{\ensuremath{\mathcal{C}}}
\newcommand{\Bcal}{\ensuremath{\mathcal{B}}}
\newcommand{\cc}{\Ccal}
\newcommand{\ca}{\Acal}
\newcommand{\cb}{\Bcal}
\newcommand{\cs}{\Scal}
\newcommand{\cd}{\Dcal}
\newcommand{\ck}{\ensuremath{\mathcal{K}}}
\newcommand{\cl}{\ensuremath{\mathcal{L}}}
\newcommand{\cx}{\ensuremath{\mathcal{X}}}
\newcommand{\cy}{\ensuremath{\mathcal{Y}}}
\newcommand{\cz}{\ensuremath{\mathcal{Z}}}
\newcommand{\Mod}{\mathrm{Mod}}
\renewcommand{\mod}{\mathrm{mod}}
\newcommand{\Tria}{\mathrm{Tria\,}}
\newcommand{\tria}{\mathrm{tria\,}}
\newcommand{\Cone}{\mathrm{Cone}}
\newcommand{\Proj}{\mathrm{Proj}}
\newcommand{\proj}{\mathrm{proj}}
\newcommand{\acyc}{\mathrm{acyc}}
\theoremstyle{plain}
\newtheorem{thm}{Theorem}[section]
\newtheorem{prop}[thm]{Proposition}
\newtheorem{proposition}[thm]{Proposition}
\newtheorem{lemma}[thm]{Lemma}
\newtheorem{cor}[thm]{Corollary}
\newtheorem{corollary}[thm]{Corollary}
\newtheorem{ex}[thm]{Example}
\newtheorem*{ex*}{Example}
\theoremstyle{definition}
\newtheorem{remark}[thm]{Remark}
\theoremstyle{remark}
\newtheorem*{rem}{Remark}
\newcommand{\ra}{\rightarrow}
\newcommand{\ten}{\otimes}
\newcommand{\lten}{\overset{\mathbf{L}}{\ten}}
\newcommand{\rhom}{\mathbf{R}\mathrm{Hom}}
\newcommand{\RHom}{\mathbf{R}\mathrm{Hom}}
\newcommand{\mcx}{\mathcal{X}}
\newcommand{\confer}{\emph{cf.}}
\numberwithin{equation}{section}
\begin{document}

\renewcommand{\thefootnote}{\fnsymbol{footnote}}

\begin{center}

{\bf Ladders and simplicity of derived module categories}

\bigskip

{\sc Lidia Angeleri  H\" ugel\footnote{LAH acknowledges partial
support by the Unversity of Padova through Project CPDA105885/10
"Differential graded categories", by DGI MICIIN
MTM2011-28992-C02-01, and by the Comissionat Per Universitats i
Recerca de la Generalitat de Catalunya through Project 2009 SGR
1389.}, Steffen Koenig, Qunhua Liu\footnote{QL acknowledges partial support by Natural Science Foundation of China 11301272 and of Jiangsu Province BK20130899.}, Dong Yang\footnote{DY
acknowledges support by the DFG priority programme SPP 1388
YA297/1-1 and KO1281/9-1.}}
\bigskip
\end{center}

\address{Lidia Angeleri  H\" ugel
\\ Dipartimento di Informatica - Settore Matematica
\\ Universit\`a degli Studi di Verona
\\ Strada Le Grazie 15 - Ca' Vignal 2
\\ I - 37134 Verona, Italy}
\email{lidia.angeleri@univr.it}

\address{Steffen Koenig\\
Institute of Algebra and Number Theory,
University of Stuttgart \\ Pfaffenwaldring 57 \\ 70569 Stuttgart,
Germany} \email{skoenig@mathematik.uni-stuttgart.de}

\address{Qunhua Liu \\ Institute of Mathematics, School of Mathematical Sciences, Nanjing Normal University \\  Nanjing 210023,
P.R.China
\newline
\indent Institute of Algebra and Number Theory,
University of Stuttgart \\ Pfaffenwaldring 57 \\ 70569 Stuttgart,
Germany}
\email{05402@njnu.edu.cn}

\address{Dong Yang \\ Department of Mathematics, Nanjing University \\ Nanjing 210093, P. R. China
\newline
\indent Institute of Algebra and Number Theory,
University of Stuttgart \\ Pfaffenwaldring 57 \\ 70569 Stuttgart,
Germany}\email{yangdong@nju.edu.cn}

\date{\today}

\begin{quote}
{\footnotesize {\bf Abstract.} Recollements of derived module categories are investigated, using a new
technique, ladders of recollements, which are maximal mutation sequences.
The position in the ladder is shown to control whether a recollement
restricts from unbounded to another level of derived category. Ladders
also turn out to control derived simplicity on all levels. An algebra is
derived simple if its derived category cannot be deconstructed, that is,
if it is not the middle term of a non-trivial recollement whose outer
terms are again derived categories of algebras. Derived simplicity on
each level is characterised in terms of heights of ladders.

These results are complemented by providing new classes of examples of
derived simple rings, in particular indecomposable commutative rings, as
well as by a finite dimensionsal counterexample to the Jordan-H\"older
problem for derived module categories. Moreover, recollements are used to
compute homological and K-theoretic invariants.}
\end{quote}

\smallskip

\vspace{1ex}

\begin{quote}
{\footnotesize {\bf Keywords:} derived module categories,
recollements, ladders, lifting and restricting recollements, derived simple
algebras, homological dimensions, commutative noetherian rings.}
\end{quote}

\vspace{2ex}

\tableofcontents

\section{Introduction}

Derived categories, introduced by Grothendieck and Verdier, have been
playing an increasingly important role in various areas of mathematics,
including representation theory, algebraic geometry, microlocal analysis and
mathematical physics. Major topics of current interest are substructures of
derived categories, such as bounded $t$-structures, which form the
`skeleton'
of Bridgeland's stability manifold, as well as comparisons of
derived categories. One way to compare derived categories and their
invariants, such as Grothendieck groups and Hochschild cohomologies, is by
derived equivalences, for instance by tilting. Another way has been introduced
by Beilinson, Bernstein and Deligne, who defined the concept of
a recollement \cite{BeilinsonBernsteinDeligne82} of a triangulated category.
A recollement (`gluing')
of a derived category by another two derived categories is a
diagram of six functors between these categories, generalising Grothendieck's
six functors. Beilinson, Bernstein and Deligne used recollements to define
the category of perverse sheaves over a stratified topological space as the
heart of a $t$-structure that is obtained by `gluing' standard $t$-structures
on the strata.

Viewing a recollement as a short exact sequence of categories, deconstructing
the middle term into smaller and possibly less complicated outer terms,
suggests to use recollements for computing homological invariants inductively
along sequences of recollements. The induction start then has to be based on
investigating `derived simple' algebras that cannot be deconstructed further.
This raises a number of questions to which we are going to provide (positive
or negative) answers.
\begin{itemize}
\item[(1)]
{ Which invariants can be computed inductively along recollements?}
\item[(2)]
{ Does the concept of derived simplicity depend on the choice of (unbounded,
bounded, ...) derived category?}
\item[(3)]
{ When do recollements lift or restrict between different levels of derived
categories?}
\item[(4)]
{ Which algebras are derived simple?}
\item[(5)]
{ Which algebras satisfy a derived version of the Jordan--H\"older
theorem?}
\end{itemize}

\smallskip

With respect to inductively computing invariants along recollements, Happel
showed in~\cite{Happel93} that conjectures on homological dimensions, such
as the finitistic dimension conjecture,
can be transfered to smaller algebras. It is known that a recollement of derived module categories
induces long exact sequences on algebraic K-theory \cite{ThomasonTrobaugh90,Yao92,NeemanRanicki04,Schlichting06}, on Hochschild homologies and cyclic homologies~\cite{Keller98}, and on Hochschild cohomologies~\cite{KoenigNagase09} and \cite{Han11}.
In~\cite{LiuVitoria11}, the third named author and Vit\'oria showed that for a
finite-dimensional piecewise hereditary algebra  over a
 field any bounded $t$-structure can be
obtained by `gluing' via recollements from standard $t$-structures on
the derived category of vector spaces.

Complementing and extending results in the literature, we  show
the following results in Proposition \ref{p:finiteness-of-global-dimension},
Lemma \ref{l:finite-sides} (b) and Proposition \ref{p:rank-and-recollement}.
\smallskip

\noindent{\bf Theorem I.} \label{t:main-1}
{\it
Let $A,B,C$ be rings. 
Suppose that $\cd(\Mod A)$ admits a recollement by $\cd(\Mod B)$ and $\cd(\Mod C)$.
\begin{itemize}
\item[(a)] The global dimension of $A$ is finite if and only if those of $B$ and $C$ are finite.
\item[(b)] If $A$ is finite-dimensional over a field, then so are $B$ and $C$ over the same field. Moreover, the ranks of the Grothendieck groups of $\mod B$ and $\mod C$ sum up to that of $\mod A$.
\end{itemize}
}

The following result on higher K-groups,
 Theorem \ref{t:splitting-K-groups}, is motivated by the work of Chen
and Xi \cite{ChenXi12a}, which also has been extended by these authors in parallel and independent work \cite{ChenXi12b}.

\smallskip

\noindent{\bf Theorem II.} \label{t:main-1-K}
{\it
Let $A,B,C$ be finite-dimensional algebras over a field.
Suppose that $\cd^-(\Mod A)$ admits a recollement by $\cd^-(\Mod B)$ and $\cd^-(\Mod C)$. Then we have an isomorphism of K-groups $\mathbb{K}_*(A)\cong \mathbb{K}_*(B)\oplus \mathbb{K}_*(C)$.
}

\smallskip

Concerning the question (2), Example
\ref{ex:Dminussimple-neq-Dsimple} provides a finite-dimensional algebra that
is $\cd^-(\Mod)$-simple but not $\cd(\Mod)$-simple, that is, it has  non-trivial recollements at $\cd(\Mod)$-level, but not at $\cd^-(\Mod)$-level, and Example~\ref{e:three-recollements-and-derived-simple} provides a finite-dimensional algebra that is $K^b(\proj)$-simple but not $\cd^-(\Mod)$-simple. So, the concept of being
derived simple strongly depends on the choice of derived categories. We
will clarify the connection between the different choices by characterising
derived simplicity on each level in terms of recollements on the unbounded level.
Our main tool here is the concept of ladders, which are collections of
`adjacent' recollements, whose number is called `height' of
the ladder (see Section~\ref{s:ladder}).
The following result combines Theorems \ref{t:dminus-simple-and-ladder},
\ref{t:kbproj-simplicity-and-ladder} and \ref{t:dbmod-simplicity-and-ladder}.
\smallskip

\noindent{\bf Theorem III.}\label{t:main-2}
{\it Let $A$ be a finite-dimensional algebra over a field. Then
\begin{itemize}
\item[(a)] $A$ is $\cd(\Mod)$-simple if and only if all non-trivial ladders of $\cd(\Mod A)$ have height $0$.
\item[(b)] $A$ is $\cd^-(\Mod)$-simple if and only if all non-trivial ladders of $\cd(\Mod A)$ have height $\leq 1$.
\item[(c)] $A$ is $\cd^b(\Mod)$-simple if and only if $A$ is $K^b(\proj)$-simple if and only if $A$ is $\cd^b(\mod)$-simple if and only if all non-trivial ladders of $\cd(\Mod A)$ have height $\leq 2$.
\end{itemize}
}

\smallskip

The proof of this theorem relies on our answer to the question (3) above. While lifting is easily seen not to be problematic, restricting
recollements is in general not possible, see Example~\ref{e:three-recollements}. Therefore, we provide several criteria for restrictions,
see Section  \ref{s:lifting-and-restricting-recollements}. These criteria
are in terms of particular objects and in terms of two of the six functors occurring in the recollement.

\smallskip

A stratification is a sequence of recollements  deconstructing a derived module category into `simple factors'. These simple factors can be viewed as
`composition factors' of the given derived module category. The derived Jordan--H\"older theorem in Question (5) asks whether a finite stratification exists and whether any two stratifications have the same set of simple factors (up to equivalence).
The validity of the derived Jordan--H\"older theorem has been disproved by Chen
and Xi~\cite{CX1,CX2} for a certain family of
infinite-dimensional algebras, proved by the first three authors~\cite{AKL2,AKL3} for hereditary artin algebras and
finite-dimensional piecewise hereditary algebras,  and proved by the third and fourth authors~\cite{LY} for finite group algebras. 
In this paper, we give a finite-dimensional counterexample.

\smallskip

Concerning the problem of classifying derived simple algebras, classes
of such algebras were recently discovered in \cite{LY,LiuYang14,HanQin16,LiLiping13}.
Here we add another large class of examples by proving that indecomposable
commutative rings always are derived simple. In particular, this result
implies the validity of the Jordan--H\"older Theorem for derived module
categories over commutative noetherian rings.

\smallskip

Finally, we point out that Section~\ref{corrigendum} corrects a mistake in \cite{AKL3}.

\medskip

The paper is structured as follows. In Section~\ref{s:derived-categories-and-recollements} we collect some preliminaries on recollements of triangulated and derived categories, derived functors, derived simple algebras and stratifications. Moreover, we prove Theorem I(a) and the first statement of Theorem I(b). In Section~\ref{s:ladder} we recall the definition of a ladder and discuss when a ladder can be extended upwards or downwards. In Section~\ref{s:lifting-and-restricting-recollements} we study the lifting and restricting problem between different levels of derived categories; we give criteria when a recollement on $\cd(\Mod)$ can be restricted to a recollement on $\cd^-(\Mod)$, $\cd^b(\Mod)$, $K^b(\proj)$ and $\cd^b(\mod)$ respectively. Section~\ref{s:derived-simplicity} is devoted to studying the dependence of derived simplicity on the choice of (unbounded, bounded, ...) derived categories, in particular,  proving Theorem III.  Further, we show that indecomposable commutative algebras are derived simple for any choice of derived categories. As a consequence, we recover some results on silting and tilting over commutative algebras in the literature \cite{CM,PT,StovicekPospisil12}. In Section~\ref{s:k-theory} we study algebraic K-theory of finite-dimensional algebras and prove Theorem II. In the final Section~\ref{JH} we discuss the derived Jordan--H\"older theorem.

\medskip

\noindent{\it Acknowledgements:} We would like to thank Xiao-Wu Chen, Martin Kalck, Bernhard Keller,
Henning Krause, Changchang Xi and Guodong Zhou for helpful conversations, and in particular  we thank Changchang Xi for generously sharing his insight on algebraic K-theory and Yang Han for pointing out an error in an earlier version.

\section{Derived categories and recollements}\label{s:derived-categories-and-recollements}

In this section, the basic definitions are given and various
technical results are established that will be crucial in the later
sections. Subsection \ref{tria} contains some preliminaries on
triangulated categories, while  Subsection \ref{der}  focusses on
derived module categories. In Subsection~\ref{sss:left-and-right-adjoints} we study the problem when triangle functors of derived module categories restrict to subcategories.  In  \ref{ss:recollement-of-der-cat} we
collect techniques to investigate  functors and objects  involved in
recollements  of derived module categories, and in \ref{reminder} we
recall some important constructions of such recollements. The notions of  derived simple ring and stratification are reviewed in
\ref{dersimple} and \ref{strat}. {In \ref{sss:finiteness-of-global-dim} we show that given a recollement of derived module categories the middle algebra has finite global dimension if and only if so do the two outer algebras.}
\smallskip

Let $k$ be a commutative ring. We will view rings as k-algebras over a suitable commutative ring k. When $k$ is a field, let $D=\Hom_k(?,k)$ be the $k$-dual.

\subsection{Triangulated categories}\label{tria}

Let $\cc$ be a triangulated $k$-category with shift functor $[1]$.
An object $X$ of $\cc$ is \emph{exceptional} if $\Hom_{\cc}(X,
X[n])=0$ unless $n=0$. Let $\cs$ be a set of objects of $\cc$. We
denote by $\tria\cs$ the smallest triangulated subcategory of $\cc$
containing $\cs$ and closed under taking direct summands, and by
$\cs^\perp$ the \emph{right perpendicular category} of $\cs$, i.e.
\[\cs^\perp=\{X\in\cc|\Hom_\cc(Y, X[n])=0 \text{ for all }
Y\in\cs\text{ and all } n\in\mathbb{Z}\}.\] $\cs$ is called a set of
\emph{generators} of $\cc$ if $\cc=\tria\cs$.

Assume further that $\cc$ has all (set-indexed) infinite direct
sums. An object $X$ of $\cc$ is \emph{compact} if the functor
$\Hom_\cc(X,?)$ commutes with taking direct sums. For a set $\cs$ of
objects of $\cc$, we denote by $\Tria\cs$ the smallest triangulated
subcategory of $\cc$ containing $\cs$ and closed under taking direct
sums. $\cs$ is called a set of \emph{compact generators} of $\cc$ if
all objects in $\cs$ are compact and $\cc=\Tria\cs$. In this case
$\cc$ is said to be \emph{compactly generated} by $\cs$.

\subsubsection{\sc Recollements}\label{2.1.1}\label{sss:recollement}

A \emph{recollement}~\cite{BeilinsonBernsteinDeligne82} of
triangulated $k$-categories is a diagram
\begin{eqnarray}\xymatrix@!=3pc{\mathcal{C}'
\ar[r]|{i_*=i_!} &\mathcal{C} \ar@<+2.5ex>[l]|{i^!}
\ar@<-2.5ex>[l]|{i^*} \ar[r]|{j^!=j^*} &
\mathcal{C}''\ar@<+2.5ex>[l]|{j_*}
\ar@<-2.5ex>[l]|{j_!}}\label{eq:recollement}\end{eqnarray}of
triangulated categories and triangle functors such that
\begin{enumerate}
\item $(i^\ast,i_\ast)$,\,$(i_!,i^!)$,\,$(j_!,j^!)$ ,\,$(j^\ast,j_\ast)$
are adjoint pairs;

\item  $i_\ast,\,j_\ast,\,j_!$  are full embeddings;

\item  $i^!\circ j_\ast=0$ (and thus also $j^!\circ i_!=0$ and
$i^\ast\circ j_!=0$);

\item  for each $C\in \mathcal{C}$ there are triangles
\[
\xymatrix@R=0.5pc{
i_! i^!(C)\ar[r]& C\ar[r]& j_\ast j^\ast (C)\ar[r]& i_!i^!(C)[1]\\
j_! j^! (C)\ar[r]& C\ar[r]& i_\ast i^\ast(C)\ar[r]& j_!j^!(C)[1]
}
\]
where the maps are given by adjunctions.
\end{enumerate}
Thanks to (1) and (3), the two triangles (often called the \emph{canonical triangles}) in (4) are unique up to  unique isomorphisms.

We say that this is a recollement of $\cc$ by $\cc'$ and $\cc''$. We say the recollement is trivial if one of the triangulated
categories $\Ccal'$ and $\Ccal''$ is trivial, or equivalently, one
of the full embeddings $i_*$, $j_!$ and $j_*$ is a triangle
equivalence.

\subsubsection{\sc TTF triples}\label{2.1.2}\label{sss:TTF} An equivalent language for studying recollements is that of TTF triples. We are going to extend
\cite[4.2]{NZ}, which covers the case of compactly generated triangulated
categories.

Let $\cc$ be a triangulated $k$-category with shift functor $[1]$. A
\emph{$t$-structure} on $\cc$ is a pair of full subcategories
$(\cc^{\leq 0},\cc^{\geq 0})$ which are closed under isomorphisms and which satisfy the following conditions
\begin{itemize}
\item[--] $\cc^{\leq 0}[1]\subseteq \cc^{\leq 0}$ and $\cc^{\geq
0}[-1]\subseteq\cc^{\geq 0}$,
\item[--] $\Hom(X,Y)=0$ for $X\in\cc^{\leq 0}$ and $Y\in\cc^{\geq
0}[-1]$,
\item[--] for any object $X$ of $\cc$ there is a triangle
\[\xymatrix{X'\ar[r] &X\ar[r] & X''\ar[r] & X'[1]}\]
with $X'\in\cc^{\leq 0}$ and $X''\in\cc^{\geq 0}[-1]$.
\end{itemize}

A \emph{TTF triple} of $\cc$ is a triple $(\cx,\cy,\cz)$ of full
subcategories of $\cc$ such that $(\cx,\cy)$ and $(\cy,\cz)$ are
$t$-structures on $\cc$. It follows that $\cx$, $\cy$ and $\cz$ are
actually triangulated subcategories of $\cc$.
Given a recollement of the form (\ref{eq:recollement}), one easily
checks that $$(\Img(j_!), \Img(i_*), \Img(j_*))$$ is a TTF triple of
$\cc$, which we shall call the {\it associated TTF triple}.
We say that two recollements are equivalent if the TTF
triples associated to them coincide. The following result is
essentially included in~\cite[Section
1.4.4]{BeilinsonBernsteinDeligne82}, see also~\cite[Section
9.2]{Neeman99}, \cite[Section 4.2]{NZ}.

\begin{proposition}\label{p:recollement-corresponds-to-TTF}
Let $\cc$ be a triangulated $k$-category. There is a one-to-one
correspondence between equivalence classes of recollements of $\cc$
and TTF triples of $\cc$.
\end{proposition}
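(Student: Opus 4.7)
The strategy is to exhibit explicit maps in both directions between equivalence classes of recollements and TTF triples, and then check they are mutually inverse.

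For the direction from recollements to TTF triples, the plan is to use the two canonical triangles directly. Starting from a recollement of the form \eqref{eq:recollement}, set $\cx=\Img(j_!)$, $\cy=\Img(i_*)$, $\cz=\Img(j_*)$. One first checks that $\cx$ and $\cz$ are closed under isomorphisms and appropriate shifts using that $j_!,i_*,j_*$ are triangle functors. The Hom-vanishing part of the $t$-structure axioms for both $(\cx,\cy)$ and $(\cy,\cz)$ reduces, via the adjunctions $(j_!,j^!)$ and $(j^*,j_*)$ together with axiom (3), to $j^!\circ i_!=0$ and $i^*\circ j_!=0$, both of which follow from $i^!\circ j_*=0$ by passing to adjoints. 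The triangle axiom of a $t$-structure is exactly the content of the two canonical triangles in axiom (4) once one reads them, for the first $t$-structure, as the decomposition $j_!j^!(C)\to C\to i_*i^*(C)\to$ with left term in $\cx$ and right term in $\cy$, and symmetrically for the second.

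For the direction from TTF triples to recollements, given $(\cx,\cy,\cz)$ the plan is to define $i_*$, $j_!$, $j_*$ as the three inclusion functors and then produce the remaining four functors as adjoints coming from the two $t$-structures. Concretely, because $(\cx,\cy)$ is a $t$-structure, the inclusion $i_*\colon\cy\hookrightarrow\cc$ admits a left adjoint $i^*$ (the $\cy$-truncation) and the inclusion $j_!\colon\cx\hookrightarrow\cc$ admits a right adjoint $j^!$; similarly $(\cy,\cz)$ supplies a right adjoint $i^!$ to $i_*$ and a left adjoint $j^*$ to $j_*$. The crucial identification needed for the notation of a recollement is that $\cx$ and $\cz$ can be taken as a common ``outer'' category $\cc''$ with $j^!=j^*$. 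For this I would show that both inclusions $\cx\hookrightarrow\cc$ and $\cz\hookrightarrow\cc$ induce triangle equivalences with the Verdier quotient $\cc/\cy$, since each is fully faithful with essential image perpendicular to $\cy$ on the appropriate side, and the composition of either with the quotient functor is an equivalence. Transporting $j_*$ along the resulting equivalence $\cx\simeq\cz$, one obtains the six functors on a single pair $(\cy,\cc'')$ with $j^!=j^*$. The axioms (1)--(4) of a recollement are then verified: adjointness is by construction; the full embedding property of $j_!,j_*$ is part of the $t$-structure; $i^!\circ j_*=0$ is immediate from the orthogonality $\cz\subseteq\cy^\perp$; and the canonical triangles are the $t$-structure truncation triangles.

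Finally, I would verify that the two constructions are mutually inverse on equivalence classes. One direction is tautological: starting with a TTF triple, building the recollement as above, and then taking $(\Img(j_!),\Img(i_*),\Img(j_*))$ recovers $(\cx,\cy,\cz)$ since the inclusions are, by definition, fully faithful with those essential images. In the other direction, given a recollement, the recollement constructed from its associated TTF triple has the same essential images for $i_*,j_!,j_*$, and hence is equivalent to the original one in the sense of \ref{sss:TTF}, which is all that is required.

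The main obstacle is the identification step $\cx\simeq\cz$ needed to express $j_!$ and $j_*$ as two embeddings of a single category $\cc''$ with $j^!=j^*$; everything else is a direct unpacking of the $t$-structure axioms and adjunctions. This step is where one uses non-trivially that the middle term $\cy$ of a TTF triple is both a left and a right aisle, so that $\cx$ and $\cz$ are canonically equivalent via the Verdier quotient $\cc/\cy$.
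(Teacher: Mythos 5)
Your proposal is correct and takes essentially the same route as the paper, which does not write the argument out in full but cites the literature and isolates Lemma~\ref{l:complete-ses-to-t-str} as the key point; your identification of $\cx$ and $\cz$ through the Verdier quotient $\cc/\cy$ is precisely the content of that lemma applied to the two $t$-structures of the TTF triple. The remaining steps (canonical triangles giving the $t$-structures, adjunctions plus axiom (3) giving orthogonality, and the tautological check that the constructions are mutually inverse on equivalence classes) match the standard argument the paper is referring to.
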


The key point of the proof is the following lemma.

\begin{lemma}\label{l:complete-ses-to-t-str} Let
$0\rightarrow\cc'\stackrel{F}{\rightarrow}\cc\stackrel{G}{\rightarrow}\cc''\rightarrow
0$ be a short exact sequence of triangulated $k$-categories, i.e.
$F$ is fully faithful and $G$ induces a triangle equivalence
$\cc/\Img F\cong\cc''$ (possibly up to direct summands). 
Then the following holds.
\begin{itemize}
\item[(a)] $F$ has a left adjoint if and only if so does $G$. In this case, if $G'$ denotes the left adjoint of $G$, then $(\Img G',\Img F)$ is a
$t$-structure on $\cc$.

\item[(b)] $F$ has a right adjoint if and only if so does $G$. In this case, if $G''$ denotes the right adjoint of $G$, then $(\Img F,\Img G'')$ is a
$t$-structure on $\cc$.
\end{itemize}
\end{lemma}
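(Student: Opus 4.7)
The plan is to establish both directions of the equivalence. Since the right-adjoint case is obtained from the left-adjoint case by a dual argument (reversing arrows, swapping units with counits, and interchanging ${}^{\perp}\Img F$ with $(\Img F)^{\perp}$), I shall only sketch the left-adjoint case. The underlying idea is to use a left adjoint of $F$ to manufacture a stable $t$-structure $({}^{\perp}\Img F,\Img F)$ on $\cc$, and then to transport the resulting equivalence ${}^{\perp}\Img F\simeq\cc/\Img F\simeq\cc''$ back along $G$ in order to define its left adjoint.

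Assume $F$ has a left adjoint $F^{\ast}$. For every $X\in\cc$ I embed the unit $\eta_{X}:X\to FF^{\ast}(X)$ in a functorial triangle
\[K(X)\to X\xrightarrow{\eta_{X}}FF^{\ast}(X)\to K(X)[1].\]
The first key step is to verify $K(X)\in{}^{\perp}\Img F$: applying $\Hom_{\cc}(-,F(Z)[n])$ and using full faithfulness of $F$ together with the adjunction isomorphism $\Hom_{\cc}(X,F(Z)[n])\cong\Hom_{\cc'}(F^{\ast}(X),Z[n])$, the map $\eta_{X}^{\ast}$ becomes the identity under these canonical identifications, so the long exact sequence forces $\Hom_{\cc}(K(X),F(Z)[n])=0$ for every $Z\in\cc'$ and $n\in\Z$. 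Consequently $({}^{\perp}\Img F,\Img F)$ is a stable $t$-structure on $\cc$, the three axioms being supplied respectively by shift-closure of both pieces, the orthogonality built into the definition, and the displayed triangle.

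Next I show that the quotient $\pi:\cc\to\cc/\Img F$ restricts to a triangle equivalence on ${}^{\perp}\Img F$. Essential surjectivity is immediate from $\pi(X)\cong\pi(K(X))$. Full faithfulness is the standard calculus-of-fractions argument for semi-orthogonal decompositions: for $U,V\in{}^{\perp}\Img F$ and a roof $U\xleftarrow{s}Z\to V$ with $\Cone(s)\in\Img F$, the vanishing of $\Hom_{\cc}(\Cone(s)[m],V)$ for all $m\in\Z$ allows $s$ to be replaced by the identity on $U$. Pre-composing the inverse of $\cc/\Img F\simeq\cc''$ (available up to direct summands by hypothesis on $G$) with a quasi-inverse of $\pi|_{{}^{\perp}\Img F}$ yields a functor $G^{\ast}:\cc''\to{}^{\perp}\Img F\hookrightarrow\cc$ with $\Img G^{\ast}={}^{\perp}\Img F$, so that $(\Img G^{\ast},\Img F)$ coincides with the $t$-structure just obtained. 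Adjointness then follows from
\[\Hom_{\cc}(G^{\ast}(X''),W)\cong\Hom_{\cc}(G^{\ast}(X''),K(W))\cong\Hom_{\cc''}(GG^{\ast}(X''),G(K(W)))\cong\Hom_{\cc''}(X'',G(W)),\]
the three isomorphisms using, respectively, the triangle for $W$ together with $G^{\ast}(X'')\in{}^{\perp}\Img F$; the full faithfulness of $\pi|_{{}^{\perp}\Img F}$; and the identifications $GG^{\ast}\cong\mathrm{id}_{\cc''}$ and $G(K(W))\cong G(W)$.

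The converse implication is obtained by an analogous argument in the opposite direction: given a left adjoint $G^{\ast}$ of $G$, the counit triangle $G^{\ast}G(Y)\to Y\to C(Y)\to G^{\ast}G(Y)[1]$ has $C(Y)\in\Img F$ (equivalently $G(C(Y))=0$, which follows from the triangle identities combined with $\ker G=\Img F$ up to summands), so one can functorially write $C(Y)=FF^{\ast}(Y)$ and check by a Hom-computation dual to the one above that the resulting $F^{\ast}$ is left adjoint to $F$, producing in the process the same $t$-structure $(\Img G^{\ast},\Img F)$. The principal technical obstacle that I expect to occupy most of the work is the bookkeeping required by the ``up to direct summands'' clause in the equivalence $\cc/\Img F\simeq\cc''$, which threads through both directions; the calculus-of-fractions verification of full faithfulness of $\pi|_{{}^{\perp}\Img F}$ is routine but somewhat delicate. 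The right adjoint case of the lemma proceeds entirely by duality, with $({}^{\perp}\Img F,\Img F)$ replaced by $(\Img F,(\Img F)^{\perp})$ and units replaced by counits.
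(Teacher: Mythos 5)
The paper does not actually prove Lemma~\ref{l:complete-ses-to-t-str}: it is stated as the key point behind Proposition~\ref{p:recollement-corresponds-to-TTF} and attributed to \cite[Section 1.4.4]{BeilinsonBernsteinDeligne82}, \cite[Section 9.2]{Neeman99} and \cite[Section 4.2]{NZ}, so there is no in-paper argument to compare with. Your strategy (build the stable $t$-structure $({}^{\perp}\Img F,\Img F)$ from the unit triangle of $F^{\ast}\dashv F$, identify ${}^{\perp}\Img F$ with the Verdier quotient, and transport along $G$) is exactly the standard route from those sources, and the computation of the unit triangle, the adjunction chain at the end, and the identification of the resulting $t$-structure are all fine.

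Two points need attention. First, in the full-faithfulness step for $\pi|_{{}^{\perp}\Img F}$ you invoke the vanishing of $\Hom_{\cc}(\Cone(s)[m],V)$; but $\Cone(s)\in\Img F$ and $V\in{}^{\perp}\Img F$, so this is the orthogonality $\Hom(\Img F,{}^{\perp}\Img F)=0$, which fails in general (it is precisely why a TTF triple has three distinct terms). The correct argument uses the orthogonality you do have, $\Hom_{\cc}(U,\Cone(s)[m])=0$, to split the triangle $Z\to U\to\Cone(s)$, so that $s$ is a split epimorphism and the roof is equivalent to one with denominator $\mathrm{id}_U$; the step is standard and true, but your justification as written is not. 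Second, the ``up to direct summands'' issue you defer is not mere bookkeeping: your $G^{\ast}$ is defined as $\iota\circ(\pi|_{{}^{\perp}\Img F})^{-1}\circ\bar G^{-1}$, and $\bar G^{-1}$ simply does not exist when $\bar G\colon\cc/\Img F\to\cc''$ is only an equivalence up to summands, so $G^{\ast}$ is undefined on objects of $\cc''$ that are merely summands of objects in the image. Worse, the implication can genuinely fail at this point without an implicit idempotent-completeness hypothesis (take $\cc'=0$ and $G$ the inclusion of $\cc$ into its idempotent completion: $F=0$ has both adjoints, while a left adjoint of $G$ would force the defining idempotents to split in $\cc$). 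So the one step you explicitly postpone is the one that carries the real content of the lemma in the stated generality, and the proof is incomplete until it is addressed (e.g.\ by restricting to the idempotent-complete setting in which the paper applies the lemma).
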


When $\Ccal$ is compactly generated by one object - for example,
$\Ccal$ is the derived module category of some (dg)-$k$-algebra -
a TTF triple is uniquely determined by its first term,
see \cite[4.4.14]{NZ}. In other words, the associated recollement is
uniquely determined by $\Img(j_!)$. Following \cite{NZ}, if
$\Ccal''$ is compactly generated by one object $T$, we say the {\it
recollement is generated by} $j_!(T)$.

\smallskip

The following result shows that recollements preserve direct sum
decompositions, which generalises a result implicitly appearing in
the proof of~\cite[Corollary 3.4]{LY}.

\begin{lemma}\label{l:commutating-direct-sum-decomposition-and-recollements}
Assume that a recollement of the form (\ref{eq:recollement}) is
given. Suppose that $\cc=\cc_1\oplus\ldots\oplus\cc_s$ is a direct
sum decomposition of triangulated categories. Then there are direct
sum decompositions $\cc'=\cc'_1\oplus\ldots\oplus\cc'_s$ and
$\cc''=\cc''_1\oplus\ldots\oplus\cc''_s$ such that the given
recollement restricts to recollements
$$\xymatrix@!=3pc{\mathcal{C}_i' \ar[r]|{i_*=i_!} &\mathcal{C}_i \ar@<+2.5ex>[l]|{i^!}
\ar@<-2.5ex>[l]|{i^*} \ar[r]|{j^!=j^*} &
\mathcal{C}_i''\ar@<+2.5ex>[l]|{j_*} \ar@<-2.5ex>[l]|{j_!}}$$ and
the direct sum of these restricted recollements is the given recollement.
\end{lemma}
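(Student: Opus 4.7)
The plan is to translate the statement into one about TTF triples via Proposition \ref{p:recollement-corresponds-to-TTF}, and to decompose the associated TTF triple along the given direct sum decomposition of $\cc$. Writing $(\cx, \cy, \cz) := (\Img j_!, \Img i_*, \Img j_*)$ for the TTF triple of the recollement, I would show that setting $\cx_i := \cx \cap \cc_i$, $\cy_i := \cy \cap \cc_i$, $\cz_i := \cz \cap \cc_i$ produces a TTF triple on each $\cc_i$, and that $(\cx, \cy, \cz)$ is the direct sum of these restrictions. Proposition \ref{p:recollement-corresponds-to-TTF} then converts the $(\cx_i, \cy_i, \cz_i)$ back into recollements, and by inspection these are the restrictions of the original six functors, with $\cc'_i$ and $\cc''_i$ given by $\{Y\in\cc' : i_*(Y)\in\cc_i\}$ and $\{Z\in\cc'' : j_!(Z)\in\cc_i\}$ respectively.

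First I would note that each of $\cx, \cy, \cz$ coincides with the kernel of a triangle functor, namely $\ker i^*$, $\ker j^*$ and $\ker i^!$ (the last via the second canonical triangle $i_*i^!(C)\to C\to j_*j^*(C)\to$), and is therefore closed under direct summands. Hence every $X\in\cx$ decomposes as $X=\bigoplus X_i$ with $X_i\in\cx_i$, and similarly for $\cy$ and $\cz$, giving the required direct sum decompositions of each of the three subcategories.

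Next I would verify that $(\cx_i,\cy_i)$ is a $t$-structure on $\cc_i$. Shift-closure and orthogonality are inherited directly from $(\cx,\cy)$, so the key point is the truncation triangle. For $X\in\cc_i$, the truncation in $\cc$ produces $X'\to X\to X''\to X'[1]$ with $X'\in\cx$ and $X''\in\cy$; decomposing $X'=\bigoplus X'_j$ and $X''=\bigoplus X''_j$ along $\cc=\bigoplus\cc_j$ and using that $\Hom(\cc_j,\cc_k)=0$ for $j\neq k$, the three connecting maps become diagonal and the triangle splits as a direct sum of triangles $X'_j\to X_j\to X''_j\to X'_j[1]$. For $j\neq i$ the middle term is zero, forcing $X''_j\cong X'_j[1]\in\cx\cap\cy$; but $\cx\cap\cy=0$ follows immediately from $\Hom(\cx,\cy[-1])=0$ applied to an identity morphism, so $X',X''\in\cc_i$ as required. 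The same argument restricts the $t$-structure $(\cy,\cz)$, giving the TTF triple $(\cx_i,\cy_i,\cz_i)$ on $\cc_i$.

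Putting this together, Proposition \ref{p:recollement-corresponds-to-TTF} converts each $(\cx_i,\cy_i,\cz_i)$ into a recollement of $\cc_i$, and the decompositions $\cc'=\bigoplus\cc'_i$ and $\cc''=\bigoplus\cc''_i$ follow by tracking the essential images of $i_*$ and $j_!$ through the first step. The main obstacle I expect is the triangle-splitting step in the third paragraph: it relies in an essential way on the summand-closure of $\cx, \cy, \cz$ and on the intersection vanishing $\cx\cap\cy=0$, and without both ingredients one cannot promote the outer decompositions to a genuine splitting of the truncation triangle, which would break the whole strategy.
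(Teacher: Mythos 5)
Your proposal is correct and follows essentially the same route as the paper's proof: pass to the associated TTF triple, intersect each of $\cx,\cy,\cz$ with the summands $\cc_i$, use closure under direct summands to obtain $\cx=\bigoplus_i\cx_i$ (and likewise for $\cy,\cz$), and decompose the truncation triangles componentwise to see that each $(\cx_i,\cy_i,\cz_i)$ is a TTF triple on $\cc_i$. The only difference is that you spell out details the paper leaves implicit (summand-closure via kernels of triangle functors, the splitting of the truncation triangle and the vanishing $\cx\cap\cy=0$), all of which are sound.
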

\begin{proof} Let $(\cx,\cy,\cz)$ be the TTF triple corresponding to
the given recollement. For $i=1,\ldots,s$, let $\cx_i=\cx\cap\cc_i$,
$\cy_i=\cy\cap\cc_i$ and $\cz_i=\cz\cap\cc_i$. Then $\cx=\bigoplus_i
\cx_i$, $\cy=\bigoplus_i \cy_i$ and $\cz=\bigoplus_i \cz_i$, because
each object $C$ of $\cc$ is the direct sum $C=\bigoplus_i C_i$ of
objects $C_i\in\cc_i$ and $\cx$, $\cy$ and $\cz$ are closed under
taking direct summands. Moreover, each triangle in $\cc$ is the
direct sum of $s$ triangles, respectively lying in
$\cc_1,\ldots,\cc_s$. It follows that for any $i=1,\ldots,s$, the
triple $(\cx_i,\cy_i,\cz_i)$ is a TTF triple for $\cc_i$.
\end{proof}

\medskip

\subsection{Derived categories}\label{der}
Let  $A$ be a $k$-algebra. We denote by $\mathrm{Mod}A$ the category
of  (right) $A$-modules, by $\mathrm{mod}A$ its subcategory
consisting of modules with projective resolution by finitely
generated projectives, and by $\proj A$ its subcategory of finitely
generated projective modules. The analogous categories of left modules will
be denoted by $A\mbox{-}\Mod$, $A\mbox{-}\mod$, $A\mbox{-}\proj$ respectively. For
$*\in\{b,-,+,\emptyset\}$, let
$\cd^*(\mathrm{Mod}A)$ denote the derived category of complexes of
objects in  $\mathrm{Mod}A$
satisfying the corresponding boundedness condition.   {\it When $k$ is a field and $A$ is a finite-dimensional
$k$-algebra}, we also consider the corresponding derived categories $\cd^*(\mathrm{mod}A)$ of
objects in  $\mathrm{mod}A$. Notice that  $\cd^b(\mod A)$ then
 coincides with
 the full subcategory $\cd_{fl}(A)$ of $\cd(\Mod A)$ consisting of
complexes of $A$-modules whose total cohomology has finite length
over $k$.  The shift
functor will be denoted by $[1]$.

 Let $K^b(\proj A)$ denote the
homotopy category of bounded complexes of finitely generated
projective $A$-modules. We often view it as a full subcategory of
the categories $\cd^*(\mod A)$ and $\cd^*(\Mod A)$ and identify it
with its essential image.
The objects in $K^b(\proj A)$ are, up to isomorphism, precisely the
compact objects in $\cd(\Mod A)$. The free module $A_A$ of rank $1$
is a compact generator of $\cd(\Mod A)$.
 For a complex $X\in\cd(\Mod A)$  we write $X^{\tr_A}$ for $\rhom_A(X,A)$. When it does not cause confusion, we will drop the subscript $A$ and simply write $X^{\tr}$.

A complex $P$ in $K^b(\proj A)$ is said to have {\em length} $n$ if
$n$ is the minimal integer such that there is $Q\in K^b(\proj A)$ such that $Q\cong P$ and $n$
equals the number of non-zero components of $Q$.

If $k$ is a field and $A$ is finite-dimensional over $k$, then any object in  $\cd^-(\mod A)$
admits a \emph{minimal}
representative, that is, a complex of finitely generated projective
$A$-modules such that the images of the differentials lie in the
radicals.

\begin{lemma}\label{l:compact-and-finite-dimensional}
\begin{itemize}
\item[(a)]
The category $\cd_{fl}(A)$ is the full subcategory of $\cd(\Mod A)$
consisting of those objects $X$ such that the total cohomology of
the complex $\rhom_A(P,X)$ has finite length over $k$, i.e.
$\bigoplus_{n\in\mathbb{Z}}\Hom(P,X[n])$ has finite length, for any
$P\in K^b(\proj A)$.
\item[(b)]
The category $\cd^b(\Mod A)$ is the full subcategory of $\cd(\Mod
A)$ consisting of those objects $X$ such that the the complex
$\rhom_A(P,X)$ has bounded total cohomology, i.e. $\Hom(P,X[n])\neq
0$ for only finitely many $n\in\mathbb{Z}$, for any $P\in K^b(\proj
A)$.
\end{itemize}

Assume that $k$ is
a field and that $A$ is a finite-dimensional $k$-algebra.
\vspace{-5pt}
\begin{itemize}
\item[(c)]
The category $K^b(\proj A)$ is the full subcategory of $\cd(\Mod
A)$ consisting of those objects $P$ such that the complex
$\rhom_A(P,X)$ has finite-dimensional total cohomology for any
$X\in \cd^b(\mod A)$.
\item[(d)]
The category $K^b(A\mbox{-}\proj)$ is the full subcategory of $\cd(A\mbox{-}\Mod)$
consisting of those objects $P$ such that the complex $X\lten_A
P$ has finite-dimensional total cohomology for any $X\in
\cd^b(\mod A)$.
\end{itemize}
\end{lemma}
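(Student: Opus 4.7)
The plan is to establish each of the four identifications by a uniform strategy. In every case, one containment follows by applying the hypothesis to a distinguished test object (typically $A$ itself), while the other follows because the property under consideration is stable under triangles, shifts, and direct summands, so that it passes to the thick closure $\thick(A_A) = K^b(\proj A)$ inside $\cd(\Mod A)$.

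For part (a), the containment ``$\supseteq$'' is immediate: taking $P = A$ and using $\rhom_A(A,X) \cong X$ shows that $\bigoplus_n \Hom_{\cd(\Mod A)}(A, X[n]) \cong \bigoplus_n H^n(X)$, and the finite-length condition on this direct sum is exactly $X \in \cd_{fl}(A)$. For ``$\subseteq$'', fix $X \in \cd_{fl}(A)$ and let $\mathcal{E}_X \subseteq \cd(\Mod A)$ be the class of objects $P$ for which $\bigoplus_n \Hom(P,X[n])$ has finite length. The long exact $\Hom(-,X[\ast])$-sequence attached to a triangle, together with the fact that finite-length $k$-modules form a Serre subcategory, shows that $\mathcal{E}_X$ is a triangulated subcategory of $\cd(\Mod A)$ closed under direct summands. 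Since $A \in \mathcal{E}_X$, we conclude $K^b(\proj A) = \thick(A) \subseteq \mathcal{E}_X$. Part (b) proceeds in exactly the same way, replacing ``finite length'' by ``vanishing for $|n|$ sufficiently large'' throughout.

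The easy half of part (c) uses the same thick-closure principle: for fixed $X \in \cd^b(\mod A)$, the class of $P$ such that $\rhom_A(P,X)$ has finite-dimensional total cohomology is thick, and it contains $A$ since $\rhom_A(A,X) \cong X$ has finite-dimensional total cohomology by the definition of $\cd^b(\mod A)$. Conversely, suppose $P \in \cd(\Mod A)$ satisfies the hypothesis. Applying it to the test object $A/\rad(A)$, which lies in $\mod A$ since $A$ is finite-dimensional, we deduce that $\Hom(P, S[n])$ is finite-dimensional for every simple $A$-module $S$ and every $n$, and vanishes for $|n|$ large. From these numerical invariants one builds a \emph{minimal} semi-projective resolution $Q \to P$ in which the term $Q^{-n}$ is the direct sum, over isomorphism classes of simples $S$, of $\dim_k \Hom(P, S[n])$ copies of the projective cover of $S$. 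By construction $Q$ is a bounded complex of finitely generated projective modules, and hence $P \cong Q \in K^b(\proj A)$. Part (d) is the $\lten_A$-dual of (c): one uses $X \lten_A A \cong X$ to establish the thick-closure direction, and for the converse applies the hypothesis to $A/\rad(A)$ viewed as a right $A$-module, so that the total cohomology of $A/\rad(A) \lten_A P$ records the dimensions of the tops of the syzygies of $P$ and forces the existence of a bounded, finitely generated minimal projective resolution of $P$ as a left $A$-module.

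The main technical point, shared by (c) and (d), is the inductive construction of the minimal semi-projective resolution from the finite-dimensional data $\Hom(P, S[n])$ (respectively $\Tor^A_n(S,P)$), and the verification that these data constrain both the rank of each term and the width of the resolution. Parts (a) and (b) are, by contrast, purely formal consequences of the thick-closure principle applied to the compact generator $A_A$.
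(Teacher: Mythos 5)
Parts (a) and (b), and the ``easy'' inclusions in (c) and (d), are correct and coincide with the paper's argument: one tests against $A$ itself for one direction and runs a d\'evissage over $\thick(A)=K^b(\proj A)$ for the other. The problem lies in the converse direction of (c) (and likewise (d)). You start from an \emph{arbitrary} object $P$ of $\cd(\Mod A)$ and, from the finiteness and eventual vanishing of the numbers $\dim_k\Hom(P,S[n])$, assert that ``one builds a minimal semi-projective resolution $Q\to P$'' whose terms are prescribed by these numbers. For a general object of $\cd(\Mod A)$ no such minimal resolution is available: the inductive construction of a minimal complex of finitely generated projectives quasi-isomorphic to $P$ presupposes that $P$ has right-bounded cohomology with finitely generated cohomology modules, i.e. $P\in\cd^-(\mod A)$. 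That is precisely the step you have skipped --- and it is where the content of the converse lies; knowing certain Hom-spaces out of $P$ does not by itself produce a complex $Q$ together with a quasi-isomorphism $Q\to P$. You yourself flag this as ``the main technical point'' but do not supply it.

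The paper closes this hole by first testing against the injective cogenerator $DA\in\cd^b(\mod A)$: since $\RHom_A(P,DA)\cong DP$, the hypothesis forces $P$ to have finite-dimensional total cohomology, so $P\in\cd^b(\mod A)$; only then does it pass to a minimal right-bounded complex of finitely generated projectives and use the vanishing of $\Hom(P,S[n])$ for $|n|\gg 0$ to see that this complex is bounded. Your route through $A/\rad(A)$ can in fact be repaired without invoking $DA$: by truncation one has $\Hom_{\cd(\Mod A)}(P,S[n])\cong\Hom_A(H^{-n}(P),S)$ for every simple $S$, and since $\rad(A)$ is nilpotent, Nakayama's lemma holds for arbitrary (not necessarily finitely generated) modules; hence the finiteness and eventual vanishing of these spaces force each $H^m(P)$ to be finitely generated and almost all of them to vanish, so $P\in\cd^b(\mod A)$ after all. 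Once that is inserted, your minimal-resolution argument (with multiplicities $\dim_k\Hom(P,S[n])/\dim_k\End_A(S)$ when $k$ is not algebraically closed) goes through and is essentially the paper's final step. The same repair is needed in (d).
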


\begin{proof}
(a) It follows by d\'evissage that objects in $\cd_{fl}(A)$ satisfy
the condition. Since d\'evissage will be often used later, we give
the details here. Let $N$ be an object of $\cd_{fl}(A)$ and let $\ca$ be
the full subcategory of $\cd(\Mod A)$ consisting of objects $M$ such
that the total cohomology of $\rhom_A(M,N)$ has finite length over
$k$. Then $\ca$ contains $A$ and is closed under direct summands,
shifts and extensions. This shows that $\ca$ contains
$\tria(A)=K^b(\proj A)$ and we are done. Conversely taking $P=A$ we
see that the total cohomology space of $X=\rhom_A(A,X)$ in the
latter category has finite length, i.e. $X\in\cd_{fl}(A)$.

(b) Similar to (a).

(c) It follows by d\'evissage that objects of $K^b(\proj A)$ belong
to the latter category. Conversely, let $P$ be any object of the latter category.
Then $DP=\RHom_A(P,DA)$, and hence $P$, have finite-dimensional total cohomology, so $P\in\cd^b(\mod A)$. Thus we may
assume that $P$ is a minimal right bounded complex of finitely
generated projective $A$-modules. If such a complex is not bounded,
then some indecomposable projective $A$-module with simple top $S$ occurs infinitely
many times. It follows that there are nonzero morphisms from $P$
to infinitely many shifts of $S$, that is, the total cohomology of $\rhom_A(P,S)$ is infinite dimensional, a contradiction.

(d) Similar to (c).
\end{proof}

\smallskip

\subsubsection{\sc Restricting triangle functors}\label{sss:left-and-right-adjoints}
In this subsection we discuss restriction of triangle functors
between derived modules categories to subcategories.

Let $A$ and $B$ be $k$-algebras and $F:\cd(\Mod
A)\rightarrow\cd(\Mod B)$ a $k$-linear triangle functor. Let $\cd=K^b(\proj), \cd_{fl}, \cd^b(\mod), \cd^b(\Mod)$ or $\cd^-(\Mod)$. Let $\hat{\cd}$ be the essential image of $\cd$ under the canonical embedding into $\cd(\Mod)$. We say
that $F$ {\it restricts} to $\cd$ if $F$ restricts to a
triangle functor $\hat{\cd}_A\rightarrow\hat{\cd}_B$. The following two
well-known lemmas follow by d\'evissage.

\begin{lemma}\label{l:restriction-of-functors-to-compact-obj}
The following two conditions are equivalent:
\begin{itemize}
\item[(i)] $F$ restricts to $K^b(\proj)$,
\item[(ii)] $F(A)\in K^b(\proj B)$.
\end{itemize}
\end{lemma}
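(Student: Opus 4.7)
The plan is to prove the two implications separately. The direction (i)~$\Rightarrow$~(ii) is immediate: the free module of rank one, $A_A$, is a finitely generated projective $A$-module viewed in degree $0$, so $A \in K^b(\proj A)$; hence if $F$ restricts to $K^b(\proj)$, then $F(A) \in K^b(\proj B)$.

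For the converse (ii)~$\Rightarrow$~(i), I would apply d\'evissage in the spirit of the proof of Lemma~\ref{l:compact-and-finite-dimensional}(a). Define
\[
\Ecal = \{M \in \cd(\Mod A) \mid F(M) \in K^b(\proj B)\}.
\]
The goal is to show that $\Ecal$ contains $K^b(\proj A)$. I would verify that $\Ecal$ is a triangulated subcategory of $\cd(\Mod A)$ closed under direct summands, i.e.\ a thick subcategory. Closure under isomorphisms and shifts is clear since $F$ is a triangle functor and $K^b(\proj B)$ is stable under $[1]$. Given a triangle $X \to Y \to Z \to X[1]$ in $\cd(\Mod A)$ with two of $X,Y,Z$ in $\Ecal$, the image is a triangle in $\cd(\Mod B)$ with two of its vertices in $K^b(\proj B)$; since $K^b(\proj B)$ is itself a triangulated subcategory of $\cd(\Mod B)$, the third vertex lies in $K^b(\proj B)$, so the third of $X,Y,Z$ lies in $\Ecal$. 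Finally, $K^b(\proj B)$ is the subcategory of compact objects of $\cd(\Mod B)$ and is closed under direct summands, so $\Ecal$ is as well.

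By hypothesis (ii), $A \in \Ecal$. As recalled in Subsection~\ref{der}, $K^b(\proj A) = \tria(A)$ inside $\cd(\Mod A)$, i.e.\ $K^b(\proj A)$ is the smallest triangulated subcategory of $\cd(\Mod A)$ containing $A$ and closed under direct summands. Therefore $K^b(\proj A) \subseteq \Ecal$, which exactly says that $F$ restricts to a functor $K^b(\proj A) \to K^b(\proj B)$.

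There is no real obstacle here; the only subtle ingredient is the identification $K^b(\proj A) = \tria(A)$, which allows the d\'evissage to reach the whole perfect derived category from the single generator $A$.
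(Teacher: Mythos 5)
Your proof is correct and is exactly the argument the paper intends: the paper simply states that this lemma ``follows by d\'evissage,'' and your d\'evissage from the compact generator $A$, using $K^b(\proj A)=\tria(A)$ and the thickness of $K^b(\proj B)$, is precisely that argument spelled out.
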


\begin{lemma}\label{l:restriction-of-functors-to-finite-dimensional-obj}
Assume that $k$ is a field and that $A$ and $B$ be
finite-dimensional $k$-algebras. The following two conditions are
equivalent:
\begin{itemize}
\item[(i)] $F$ restricts to $\cd^b(\mod)$,
\item[(ii)] $F(S)\in \cd^b(\mod B)$ for any simple $A$-module $S$.
\end{itemize}
\end{lemma}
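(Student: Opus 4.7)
The plan is to prove both implications via standard dévissage, closely mirroring the arguments in Lemma~\ref{l:compact-and-finite-dimensional} and Lemma~\ref{l:restriction-of-functors-to-compact-obj}. The direction (i)$\Rightarrow$(ii) is essentially immediate: a simple $A$-module $S$ is finite-dimensional and hence lies in $\mod A \subseteq \cd^b(\mod A)$, so if $F$ restricts to $\cd^b(\mod)$, then $F(S) \in \cd^b(\mod B)$.

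For the main direction (ii)$\Rightarrow$(i), I would introduce the auxiliary subcategory
\[
\ca := \{X \in \cd^b(\mod A) \mid F(X) \in \cd^b(\mod B)\}.
\]
Since $F$ is a triangle functor and $\cd^b(\mod B)$ is a triangulated subcategory of $\cd(\Mod B)$ closed under isomorphisms and direct summands, $\ca$ is a full triangulated subcategory of $\cd^b(\mod A)$ closed under direct summands. By hypothesis, $\ca$ contains all simple $A$-modules.

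The key point is then that $\cd^b(\mod A)$ coincides with the thick subcategory generated by the simple $A$-modules. This is a two-step dévissage: first, since $A$ is finite-dimensional, every object of $\mod A$ has a finite composition series, and induction on composition length via the short exact sequences $0 \to N' \to N \to S \to 0$ (viewed as triangles in $\cd^b(\mod A)$) shows that every finite-dimensional $A$-module lies in $\ca$. Second, for an arbitrary $X \in \cd^b(\mod A)$, one uses the brutal truncation triangles to reduce inductively on the number of non-zero cohomology groups (or on the length of a bounded representative in $\mod A$) to the case of a stalk complex, which is already handled.

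No step here is really an obstacle; the only mild subtlety is making sure that the reduction in the second step stays inside $\cd^b(\mod A)$ (i.e.\ that the truncations have finite-dimensional cohomology), which is automatic because the cohomologies of $X \in \cd^b(\mod A)$ are finite-dimensional $A$-modules and only finitely many of them are non-zero.
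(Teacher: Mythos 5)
Your proof is correct and is exactly the argument the paper intends: the paper dispatches this lemma with the single remark that it ``follows by d\'evissage,'' and your write-up simply spells out that d\'evissage (thick subcategory of objects sent into $\cd^b(\mod B)$, containing the simples, which generate $\cd^b(\mod A)$ as a thick subcategory via truncation and composition series). The only cosmetic point is that the induction on the number of non-zero cohomology groups goes with the canonical truncations rather than the brutal ones, but either variant completes the argument.
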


\begin{lemma}\label{l:restriction-of-right-adjoint}
Assume that $F$ admits a right adjoint $G$. Consider the following
conditions
\begin{itemize}
\item[(i)] $F$ restricts to $K^b(\proj)$,
\item[(ii)] $G$ restricts to $\cd^b(\Mod)$,
\item[(iii)] $G$ restricts to $\cd_{fl}$.
\end{itemize}
Then (i) implies (ii) and (iii). If $k$ is a field and $A$ and $B$
are finite-dimensional $k$-algebras, then (iii) implies (i).
\end{lemma}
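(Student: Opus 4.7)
The plan is to exploit the adjunction $\Hom_{\cd(\Mod B)}(F(P),Y)\cong\Hom_{\cd(\Mod A)}(P,G(Y))$, together with the cohomological characterisations of $\cd^b(\Mod)$, $\cd_{fl}$ and $K^b(\proj)$ provided by Lemma~\ref{l:compact-and-finite-dimensional}.

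For the implication (i)$\Rightarrow$(ii), I would take $Y\in\cd^b(\Mod B)$ and verify the characterisation of Lemma~\ref{l:compact-and-finite-dimensional}(b) for $G(Y)$. Given any $P\in K^b(\proj A)$, the derived adjunction gives a natural isomorphism
\[
\rhom_A(P,G(Y))\;\cong\;\rhom_B(F(P),Y)
\]
in $\cd(\Mod k)$. Since by hypothesis $F$ sends $K^b(\proj A)$ to $K^b(\proj B)$, the object $F(P)$ is compact in $\cd(\Mod B)$, and therefore the right-hand side has bounded total cohomology by Lemma~\ref{l:compact-and-finite-dimensional}(b) applied to $Y$. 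Hence $G(Y)\in\cd^b(\Mod A)$. The implication (i)$\Rightarrow$(iii) is identical, replacing Lemma~\ref{l:compact-and-finite-dimensional}(b) by Lemma~\ref{l:compact-and-finite-dimensional}(a): if $Y\in\cd_{fl}(B)$, then $\rhom_B(F(P),Y)$ has total cohomology of finite length over $k$ for every $P\in K^b(\proj A)$, so by the adjunction the same holds for $\rhom_A(P,G(Y))$, which means $G(Y)\in\cd_{fl}(A)$.

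For the converse (iii)$\Rightarrow$(i) under the standing assumption that $k$ is a field and $A,B$ are finite-dimensional, I would use Lemma~\ref{l:restriction-of-functors-to-compact-obj} to reduce the claim to showing $F(A)\in K^b(\proj B)$, and then apply the criterion in Lemma~\ref{l:compact-and-finite-dimensional}(c): it suffices to check that $\rhom_B(F(A),X)$ has finite-dimensional total cohomology for every $X\in\cd^b(\mod B)$. Note that for a finite-dimensional $k$-algebra $B$ one has $\cd^b(\mod B)=\cd_{fl}(B)$, so such an $X$ lies in $\cd_{fl}(B)$. By adjunction
\[
\rhom_B(F(A),X)\;\cong\;\rhom_A(A,G(X))\;\cong\;G(X),
\]
and by hypothesis (iii), $G(X)\in\cd_{fl}(A)$, i.e.\ its total cohomology has finite length over $k$, equivalently is finite-dimensional. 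This gives $F(A)\in K^b(\proj B)$ as required.

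The only real subtlety is the appeal in the last step to the equality $\cd^b(\mod B)=\cd_{fl}(B)$ that justifies feeding an object of $\cd^b(\mod B)$ into hypothesis~(iii); once this is noted, the argument reduces to the formal use of the adjunction and the cohomological criteria already recorded. No further obstacle is anticipated.
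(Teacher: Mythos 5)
Your proof is correct and follows essentially the same route as the paper: the adjunction isomorphism on Hom groups combined with the cohomological characterisations in Lemma~\ref{l:compact-and-finite-dimensional}(a), (b), (c). The only (harmless) variation is that for (iii)$\Rightarrow$(i) you first reduce to the single object $F(A)$ via Lemma~\ref{l:restriction-of-functors-to-compact-obj}, whereas the paper runs the same adjunction argument for an arbitrary $P\in K^b(\proj A)$; both are fine.
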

\begin{proof} (i)$\Rightarrow$(ii) Let $M$ be in $\cd^b(\Mod B)$
 and $P$ be any object in $K^b(\proj A)$. Then
\begin{eqnarray*}\Hom_A(P,G(M[n]))
=\Hom_B(F(P),M[n]).\end{eqnarray*} The condition (i) implies that
$F(P)\in K^b(\proj B)$. It follows from
Lemma~\ref{l:compact-and-finite-dimensional} (b) applied to the
algebra $B$ that the above space does not vanish for only finitely
many $n\in\mathbb{Z}$. Applying
Lemma~\ref{l:compact-and-finite-dimensional} (b) to the algebra $A$
shows that $G(M)\in\cd^b(\Mod A)$.

(i)$\Rightarrow$(iii) This follows from
Lemma~\ref{l:compact-and-finite-dimensional} (a) by a similar
argument as in (i)$\Rightarrow$(ii).

(iii)$\Rightarrow$(i) This follows from
Lemma~\ref{l:compact-and-finite-dimensional} (c) by a similar
argument as in (i)$\Rightarrow$(ii).
\end{proof}

Let $X$ be a complex of $A$-$B$-bimodules. Then there is a pair of
adjoint triangle functors
\[?\lten_A X:\cd(\Mod A)\rightarrow \cd(\Mod B),\qquad \rhom_B(X,?):\cd(\Mod B)\rightarrow \cd(\Mod A).\]
Assume that $A$ is projective as a $k$-module. Let $\mathbf{p}X$ be
a bimodule $\ck$-projective resolution of $X$
(see~\cite{Spaltenstein88}). Then $(\mathbf{p}X)_B$ is a
$\ck$-projective resolution of $X$ as a complex of right
$B$-modules, and hence $\rhom_B(X,?)$ is naturally isomorphic to
$\rhom_B(\mathbf{p}X,?)$, which is computed as the total complex of
the Hom bicomplex. In particular, for a complex $Y$ of
$C$-$B$-bimodules, $\rhom_B(\mathbf{p}X,Y)$ has the structure of a
complex of $C$-$A$-bimodules. By abuse of notation, we will identify
$\rhom_B(X,Y)$ and $\rhom_B(\mathbf{p}X,Y)$.

\begin{lemma}\label{l:existence-of-left-adjoint} Let $X$ be a
complex of $A$-$B$-bimodules and assume that $A$ is projective as a
$k$-module. Consider the following conditions
\begin{itemize}
\item[(i)] ${}_AX\in K^b(A\mbox{-}\proj)$,
\item[(ii)] $?\lten_A X$ has a left adjoint which restricts to
$K^b(\proj)$, in this case, the left adjoint is given by  $?\lten_B X^{\tr_{A^{op}}}$,
\item[(iii)] $?\lten_A X:\cd(\Mod A)\rightarrow\cd(\Mod B)$
restricts to $\cd_{fl}$.
\end{itemize}
Then
(i)$\Rightarrow$
(ii)$\Rightarrow$(iii). If $k$
is a field and $A$ and $B$ are finite-dimensional $k$-algebras, then
all three conditions are equivalent.
\end{lemma}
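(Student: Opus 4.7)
The plan is to establish the cycle of implications (i)$\Rightarrow$(ii)$\Rightarrow$(iii) in general, and then close the loop (iii)$\Rightarrow$(i) under the finite-dimensionality hypothesis using Lemma~\ref{l:compact-and-finite-dimensional}(d).

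For (i)$\Rightarrow$(ii), I would first fix a bimodule $\ck$-projective resolution $\mathbf{p}X$ (available because $A$ is projective over $k$) so that $X^{\tr_{A^{op}}}=\rhom_{A^{op}}(\mathbf{p}X,A)$ is naturally a complex of $B$-$A$-bimodules: the right $A$-action is inherited from the target, and the right $B$-action on $\mathbf{p}X$ induces a left $B$-action by contravariance. Since ${}_AX\in K^b(A\mbox{-}\proj)$, dualizing yields $X^{\tr_{A^{op}}}\in K^b(\proj A)$ as a right $A$-module. The core step is the evaluation isomorphism
\[
M\lten_A X \;\cong\; \rhom_A(X^{\tr_{A^{op}}},M),
\]
natural in $M\in\cd(\Mod A)$ and compatible with the right $B$-module structures on both sides. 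This is proved by d\'evissage on $K^b(A\mbox{-}\proj)$: the isomorphism is tautological when ${}_AX=A$, and both sides are triangulated and summand-closed functors in ${}_AX$. Composing with the standard tensor--hom adjunction
\[
\Hom(N\lten_B X^{\tr_{A^{op}}},M) \;\cong\; \Hom(N,\rhom_A(X^{\tr_{A^{op}}},M)) \;\cong\; \Hom(N,M\lten_A X)
\]
identifies $?\lten_B X^{\tr_{A^{op}}}$ as the left adjoint to $?\lten_A X$. Because this left adjoint sends $B$ to $X^{\tr_{A^{op}}}\in K^b(\proj A)$, Lemma~\ref{l:restriction-of-functors-to-compact-obj} ensures that it restricts to $K^b(\proj)$.

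The implication (ii)$\Rightarrow$(iii) is an immediate application of Lemma~\ref{l:restriction-of-right-adjoint}: letting $F$ denote the left adjoint and noting that its right adjoint is $?\lten_A X$, the hypothesis that $F$ restricts to $K^b(\proj)$ forces $?\lten_A X$ to restrict to $\cd_{fl}$. For (iii)$\Rightarrow$(i) under the finite-dimensionality hypothesis, I would invoke Lemma~\ref{l:compact-and-finite-dimensional}(d), which characterizes $K^b(A\mbox{-}\proj)$ as the full subcategory of those $P\in\cd(A\mbox{-}\Mod)$ with $X'\lten_A P$ of finite-dimensional total cohomology for every $X'\in\cd^b(\mod A)$. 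Since $\cd^b(\mod A)=\cd_{fl}(A)$ in this setting, assumption (iii) says precisely that $X'\lten_A X\in\cd_{fl}(B)$ for each such $X'$, and objects of $\cd_{fl}(B)$ have finite-dimensional total cohomology over $k$ by definition; applying the criterion with $P={}_AX$ yields ${}_AX\in K^b(A\mbox{-}\proj)$.

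The main obstacle will be the bimodule bookkeeping in (i)$\Rightarrow$(ii): I must verify that the evaluation isomorphism is natural not just on underlying $k$-complexes but as a morphism in $\cd(\Mod B)$, so that the subsequent adjunction indeed takes values in $\cd(\Mod B)$. This is precisely where the $k$-projectivity of $A$ and the use of a bimodule $\ck$-projective resolution $\mathbf{p}X$, whose restriction $(\mathbf{p}X)_B$ computes $\rhom_B(X,?)$ compatibly, play a crucial role.
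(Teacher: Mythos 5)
Your proposal is correct and follows essentially the same route as the paper: (i)$\Rightarrow$(ii) by identifying $?\lten_A X$ with $\RHom_A(X^{\tr_{A^{op}}},?)$ and then applying tensor--hom adjunction together with Lemma~\ref{l:restriction-of-functors-to-compact-obj}, (ii)$\Rightarrow$(iii) via Lemma~\ref{l:restriction-of-right-adjoint}, and (iii)$\Rightarrow$(i) via Lemma~\ref{l:compact-and-finite-dimensional}(d). The only cosmetic difference is in how the key isomorphism is obtained: you run d\'evissage directly on the evaluation map $M\lten_A X\to\rhom_A(X^{\tr_{A^{op}}},M)$ (correctly flagging that naturality must be checked at the level of $B$-module structures), whereas the paper cites Keller's result that $\RHom_A(P,?)\cong ?\lten_A P^{\tr_A}$ for compact $P$ and combines it with the bimodule-level biduality $X\cong(X^{\tr_{A^{op}}})^{\tr_A}$, which is verified by the same d\'evissage after forgetting the $B$-action.
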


\begin{proof}

(i)$\Rightarrow$(ii): It follows by d\'evissage that
$(X^{\tr_{A^{op}}})_A=\RHom_{A^{op}}(X,A^{op})_A\in K^b(\proj A)$. Therefore, by \cite[Lemma 6.2 (a)]{Keller94}, the derived functor
\[
\RHom_A(X^{\tr_{A^{op}}},?):\cd(\Mod A)\rightarrow\cd(\Mod B)
\]
is isomorphic to $?\lten_A (X^{\tr_{A^{op}}})^{\tr_A}$, where $(X^{\tr_{A^{op}}})^{\tr_A}=\RHom_A(\RHom_{A^{op}}(X,A^{op}),A)$. Further,
the canonical morphism in $\cd(\Mod A^{op}\ten B)$
\[
X\longrightarrow (X^{\tr_{A^{op}}})^{\tr_A}
\]
is an isomorphism since restricting to $\cd(\Mod A^{op})$ it is an isomorphism for $X=A^{op}$.
It follows that $?\lten_A
X\cong\RHom_A(X^{\tr_{A^{op}}},?)$ and thus has a left adjoint $?\lten_B
X^{\tr_{A^{op}}}$, which restricts to $K^b(\proj B)\rightarrow
K^b(\proj A)$ by Lemma~\ref{l:restriction-of-functors-to-compact-obj}.

(ii)$\Rightarrow$(iii): This follows from
Lemma~\ref{l:restriction-of-right-adjoint}.

Finally, when $k$ is a field and $A$ and $B$ are finite-dimensional
$k$-algebras, the implication (iii)$\Rightarrow$(i) follows from
Lemma~\ref{l:compact-and-finite-dimensional} (d).
\end{proof}

\smallskip

\subsubsection{\sc Recollements of derived
categories}\label{ss:recollement-of-der-cat}

Suppose there is a recollement
\begin{align*}\label{eq:recollement-unbounded-der}\xymatrix@!=6pc{\cd(\Mod B) \ar[r]|{i_*=i_!} &\cd(\Mod A)
\ar@<+2.5ex>[l]|{i^!} \ar@<-2.5ex>[l]|{i^*} \ar[r]|{j^!=j^*} &
\cd(\Mod C)\ar@<+2.5ex>[l]|{j_*} \ar@<-2.5ex>[l]|{j_!}
}\tag{$R$}\end{align*} where $A$, $B$ and $C$ are $k$-algebras. By
\cite[5.2.9]{NZ}, \cite{NS1}, or \cite[Theorem 2.2]{AKL2}, the
recollement is generated
 by the compact exceptional object  $T=j_!(C)\in\cd(\Mod A)$.
 Moreover, the object $i^*(A)$ is a compact
generator of $\cd(\Mod B)$ by \cite[4.3.6, 4.4.8]{NZ}. The following
lemma is well-known.

\begin{lemma}\label{l:useful-info}
With the above notation,
 the four objects $T=j_!(C)$, $j_!j^!(A)$, $i_*i^*(A)$ and
$T'=i_*(B)$ of $\cd(\Mod A)$ satisfy
\begin{itemize}
\item[(a)] $j_!(C)$ and $j_!j^!(A)$ are left orthogonal to
both $i_*i^*(A)$ and $i_*(B)$, and $j_!j^!(A)\in\Tria(j_!(C))$;
\item[(b)] $j_!(C)$ and $i_*(B)$ are exceptional objects with
 $C\cong\End_A(T)$ and $B\cong\End_A(T')$;
\item[(c)] $\tria i_*i^*(A)=\tria i_*(B)$;
\item[(d)] $i_*i^*(A)\cong\rhom_A(i_*i^*(A),i_*i^*(A))$
as complexes of $k$-modules;
\item[(e)] $i^*$ and $j_!$ restrict to $K^b({\rm proj})$, and $i_*$ and
$j^!$ restrict to $\mathcal D^b({\rm Mod})$ as well as to $\mathcal D_{fl}$;
\item[(f)]
 $j_!j^!(A)$,
$i_*i^*(A)$ and $i_*(B)$ belong to $\cd^b(\Mod A)$.
\end{itemize}
\end{lemma}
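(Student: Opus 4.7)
My plan is to dispatch the six assertions in order, using only the recollement axioms (the four adjunctions, $j^!\circ i_*=0$, full faithfulness of $i_*,j_!,j_*$, and the canonical triangles) together with Lemmas~\ref{l:restriction-of-functors-to-compact-obj} and~\ref{l:restriction-of-right-adjoint}, plus the two facts recalled just above the lemma that $T=j_!(C)$ is a compact exceptional object of $\cd(\Mod A)$ and that $i^*(A)$ is a compact generator of $\cd(\Mod B)$. For (a), the adjunction calculation
\[
\Hom_A(j_!(C),i_*(X)[n])=\Hom_C(C,j^!i_*(X)[n])=0
\]
for any $X\in\cd(\Mod B)$ and any $n$, together with the same manipulation with $C$ replaced by $j^!(A)$, yields the orthogonality statements; and $j_!j^!(A)\in\Tria(j_!(C))$ then follows because $C$ compactly generates $\cd(\Mod C)=\Tria(C)$ and the left adjoint $j_!$ commutes with coproducts and triangles. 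For (b), full faithfulness of $j_!$ and $i_*$ gives $j^!j_!\cong\mathrm{id}$ and $i^*i_*\cong\mathrm{id}$, whence $\Hom_A(j_!(C),j_!(C)[n])\cong\Hom_C(C,C[n])$ and $\Hom_A(i_*(B),i_*(B)[n])\cong\Hom_B(B,B[n])$, vanishing for $n\neq 0$ and giving $C$ and $B$ for $n=0$.

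For (c), since $i^*(A)$ is a compact generator of $\cd(\Mod B)$ and the compact objects of $\cd(\Mod B)$ form the thick subcategory $\thick(B)=K^b(\proj B)$, one has $\thick(i^*(A))=\thick(B)$; applying the fully faithful triangle functor $i_*$, which preserves thick subcategories, then yields $\tria(i_*i^*(A))=\tria(i_*(B))$. For (d), the orthogonality in (a) extends, by the same adjunction applied to an arbitrary $Y\in\cd(\Mod C)$, to $\rhom_A(j_!(Y),i_*(X))=0$ for all $Y,X$; in particular $\rhom_A(j_!j^!(A),i_*i^*(A))=0$. Applying $\rhom_A(-,i_*i^*(A))$ to the canonical triangle $j_!j^!(A)\to A\to i_*i^*(A)\to$ then produces an isomorphism
\[
\rhom_A(i_*i^*(A),i_*i^*(A))\xrightarrow{\sim}\rhom_A(A,i_*i^*(A))\cong i_*i^*(A)
\]
in $\cd(k)$, which is exactly (d).

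For (e), $j_!(C)$ is compact in $\cd(\Mod A)$ and $i^*(A)$ is compact in $\cd(\Mod B)$ by the quoted facts, so Lemma~\ref{l:restriction-of-functors-to-compact-obj} shows that $j_!$ and $i^*$ restrict to $K^b(\proj)$; Lemma~\ref{l:restriction-of-right-adjoint} then passes this to the right adjoints $j^!$ and $i_*$, which consequently restrict to both $\cd^b(\Mod)$ and $\cd_{fl}$. Finally, (f) follows from (e) and the canonical triangle: since $i^*(A)\in K^b(\proj B)\subseteq\cd^b(\Mod B)$, (e) yields $i_*i^*(A)\in\cd^b(\Mod A)$; the triangle $j_!j^!(A)\to A\to i_*i^*(A)\to$ then forces $j_!j^!(A)\in\cd^b(\Mod A)$ as well; and $i_*(B)\in\cd^b(\Mod A)$ is immediate from $B\in\cd^b(\Mod B)$ and the restriction of $i_*$. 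I do not anticipate a genuine obstacle, as each step is either a formal adjunction computation or a direct invocation of the already-proved restriction lemmas; the one piece of bookkeeping that deserves a moment's care is the extension of the orthogonality from $j_!(C)$ to $j_!j^!(A)$ in (d), which however is subsumed in the stronger observation that the \emph{entire} image $\Img(j_!)$ is left-orthogonal to the entire image $\Img(i_*)$ by the axiom $j^!\circ i_*=0$.
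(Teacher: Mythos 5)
Your proof is correct and follows essentially the same route as the paper: adjunction plus $j^!i_*=0$ for (a), (b) and (d), the compact-generator fact $\tria(i^*(A))=\tria(B)$ for (c), and Lemmas~\ref{l:restriction-of-functors-to-compact-obj} and~\ref{l:restriction-of-right-adjoint} for (e) and (f). The only (harmless) variation is in (f), where you anchor the argument at $i_*i^*(A)$ via $i^*(A)\in K^b(\proj B)$ instead of at $i_*(B)$ via part (c), as the paper does.
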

\begin{proof}
(a) and (b) follows immediately from the definition of recollement.

(c) Since $i^*(A)$ is a compact generator of $\cd(\Mod B)$, it
follows that $\tria i^*(A)=\tria B$. Thus $\tria i_*i^*(A)=\tria
i_*(B)$ for $i_*$ is a full embedding, cf.~\cite[Lemma 2.2]{Neeman92a}.

(d) This is obtained by applying $\rhom_A(?,i_*i^*(A))$ to the
canonical triangle
\[
\xymatrix{
j_!j^!(A)\ar[r] & A\ar[r]& i_*i^*(A)\ar[r] &
j_!j^!(A)[1].
}
\]

(e) The statement on $i^*$ and $j_!$ follows from Lemma~\ref{l:restriction-of-functors-to-compact-obj} since $T$ and $T'$ are compact. For the second statement  apply Lemma~\ref{l:restriction-of-right-adjoint} on the right adjoints  $i_*$ and
$j^!$.

(f) It follows from the above canonical triangle and (c) that $j_!j^!(A)$, $i_*i^*(A)$ and $i_*(B)$
all belong to $\cd^b(\Mod A)$ if and only if so does one of them.
Since  $i_*(B)$ does by (e), the claim is proven.
\end{proof}

\smallskip

\begin{lemma}\label{l:finite-sides}
Assume that a recollement of the form
(\ref{eq:recollement-unbounded-der}) is given. The following hold
true.
\begin{itemize}
\item[(a)] \emph{(\cite[Proposition 3]{Han11})} Assume that $A$, $B$ and $C$ are projective over $k$. Then there exists a (unique) right bounded complex
$X$ of projective $C$-$A$-bimodules which as a
complex of $A$-modules is quasi-isomorphic to $T$, and up to
equivalence, we can assume
\[
j_!=?\lten_C X,~~ j^!=\rhom_A(X,?)=?\lten_A X^{\tr_A}\text{ and } j_*=\RHom_C(X^{\tr_A},?).\]
In
particular, $j_!j^!(A)= X^{\tr_A}\lten_C X$, and there is a
canonical triangle
\[
\xymatrix{
X^{\tr_A}\lten_C X= j_!j^!(A)\ar[r]& A\ar[r]& i_*i^*(A)\ar[r]&
j_!j^!(A)[1].
}
\]

Similarly, there exists a (unique) right bounded complex
$Y$ of projective $A$-$B$-bimodules which as a
complex of $B$-modules is quasi-isomorphic to $i^*(A)$, and up to
equivalence, we can assume
\[
i^*=?\lten_A Y,~~i_*=\rhom_B(Y,?)=?\lten_B Y^{\tr_B}\text{ and } i^!=\RHom_A(Y^{\tr_B},?).
\]

\item[(b)]  Suppose now that $k$ is a field and $A$ is a finite-dimensional $k$-algebra.
Then $B$ and $C$ are also finite-dimensional $k$-algebras. Moreover,
the functor $j_!$ restricts to a triangle functor $\cd^b(\mod
C)\rightarrow \cd^-(\mod A)$ and the objects $j_!j^!(A)$,
$i_*i^*(A)$ and $i_*(B)$ belong to $\cd^b(\mod A)$. Further, the object $X$ (respectively, $Y$) in (a) can be taken as a right bounded complex of finitely generated projective $C$-$A$-bimodules (respectively, $A$-$B$-bimodules).
\end{itemize}
\end{lemma}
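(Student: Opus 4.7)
The plan is to address the four claims of (b) in order, drawing primarily on Lemma~\ref{l:useful-info}, Lemma~\ref{l:compact-and-finite-dimensional}, and part (a) of the present lemma. First I would deduce finite-dimensionality of $B$ and $C$ by observing that, by Lemma~\ref{l:useful-info}(b,e), $C \cong \End_{\cd(\Mod A)}(T)$ and $B \cong \End_{\cd(\Mod A)}(T')$ where both $T = j_!(C)$ and $T' = i_*(B)$ lie in $K^b(\proj A)$. Since $A$ is finite-dimensional over $k$, any Hom-space in the homotopy category between bounded complexes of finitely generated projective $A$-modules is finite-dimensional, so $B$ and $C$ are finite-dimensional. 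The claim $j_!j^!(A), i_*i^*(A), i_*(B) \in \cd^b(\mod A)$ then follows by a short triangulated argument: $i_*(B) \in K^b(\proj A) \subset \cd^b(\mod A)$ by Lemma~\ref{l:useful-info}(e); $i_*i^*(A) \in \tria(i_*(B))$ by Lemma~\ref{l:useful-info}(c) lies in the thick subcategory $\cd^b(\mod A)$; and the canonical triangle $j_!j^!(A) \to A \to i_*i^*(A) \to j_!j^!(A)[1]$ places $j_!j^!(A)$ in $\cd^b(\mod A)$ as well.

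Next I would refine $X$ (and analogously $Y$) from part (a). The complex $X$ is a right bounded complex of projective $C^{op} \otimes_k A$-modules which, as a complex of $A$-modules, is quasi-isomorphic to $T \in K^b(\proj A)$. Its total cohomology is therefore bounded above and is finite-dimensional over $k$ in each degree. Since $C$ and $A$ are both finite-dimensional, the enveloping algebra $C^{op} \otimes_k A$ is finite-dimensional, so its finite-dimensional modules are finitely generated. A minimal projective bimodule resolution of $X$ then provides a right bounded complex of finitely generated projective $C$-$A$-bimodules quasi-isomorphic to $X$; the same argument yields the refined $Y$.

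Finally I would verify that $j_!$ restricts to a functor $\cd^b(\mod C) \to \cd^-(\mod A)$ by d\'evissage. Since $\cd^b(\mod C)$ is the thick subcategory of $\cd(\Mod C)$ generated by the simple $C$-modules, it suffices to show $j_!(S) \in \cd^-(\mod A)$ for every simple $S$. Taking a minimal projective resolution $P^\bullet \in K^-(\proj C)$ of $S$ by finitely generated projectives together with the refined $X$, the model $P^\bullet \otimes_C X$ of $j_!(S)$ has in each degree a finite direct sum $\bigoplus_{i+j=n} P^i \otimes_C X^j$ of finitely generated $A$-modules (by the right-boundedness of both factors), and is itself bounded above. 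The hard part will be the refinement step for $X$ and $Y$, as this is what secures both boundedness above and finite generation of the terms; once this is in place, the remaining assertions follow from routine triangulated or d\'evissage arguments.
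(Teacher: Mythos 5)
The central step of your argument is broken: you assert that $T'=i_*(B)$ lies in $K^b(\proj A)$, citing Lemma~\ref{l:useful-info}(b,e). But part (e) of that lemma only says that $i_*$ restricts to $\cd^b(\Mod)$ and to $\cd_{fl}$; it is $i^*$ and $j_!$ that restrict to $K^b(\proj)$. In general $i_*(B)$ is \emph{not} compact --- this is precisely the condition singling out ``perfect'' recollements later in the paper, and it fails for instance in Example~\ref{e:three-recollements}(c), where $i_*(k)=S_1\notin K^b(\proj A)$. Consequently your proof that $B$ is finite-dimensional collapses, and so does your derivation of $i_*(B)$, $i_*i^*(A)$, $j_!j^!(A)\in\cd^b(\mod A)$, since all three memberships are deduced from the false compactness of $i_*(B)$. (The argument for $C$ is fine: $T=j_!(C)$ really is compact and exceptional, so $C\cong\End_A(T)$ is finite-dimensional.) Note also the circularity lurking behind any attempt to use ``$i_*$ restricts to $\cd_{fl}$'' here: that would require $B\in\cd_{fl}(B)$, i.e.\ $B$ finite-dimensional, which is what one is trying to prove.

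The paper proceeds in the opposite order. It first shows that $j_!$ restricts to $\cd^b(\mod C)\rightarrow\cd^-(\mod A)$, via the duality computation $DH^n(j_!(M))\cong\Hom_{\cd(\Mod C)}(M,j^!D(A)[-n])$ together with $j^!(A),\,j^!(D(A))\in\cd^b(\mod C)$; this gives $j_!j^!(A)\in\cd^-(\mod A)$, and combined with the boundedness of its cohomology (Lemma~\ref{l:useful-info}(f)) one gets $j_!j^!(A)\in\cd^b(\mod A)$. The canonical triangle then forces $i_*i^*(A)\in\cd^b(\mod A)$, Lemma~\ref{l:useful-info}(c) yields $i_*(B)\in\tria\, i_*i^*(A)\subseteq\cd^b(\mod A)$, and only at that point does $B\cong\End_A(i_*(B))$ come out finite-dimensional. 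Your own later steps --- the refinement of $X$ to finitely generated projective bimodules and the explicit total-complex model for $j_!$ --- are correct and give an alternative, duality-free proof of the restriction $\cd^b(\mod C)\rightarrow\cd^-(\mod A)$; since $j^!(A)\in\cd_{fl}(C)=\cd^b(\mod C)$ by Lemma~\ref{l:useful-info}(e), you could feed this into the triangle argument above and recover everything. But the order must be reversed: the finite-dimensionality of $B$ and the membership of $i_*(B)$ in $\cd^b(\mod A)$ have to come at the end, not at the beginning, and nowhere may you assume $i_*(B)$ compact.
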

\begin{proof}
(a) This was proved in \cite{Han11} for the case when $k$ is a field. But the proof there works in our more general setting.

(b) Recall that $T=j_!(C)\in\cd(\Mod A)$ is compact and exceptional
and has endomorphism algebra $C$. It follows that $C$ is a
finite-dimensional $k$-algebra.

Next we show that $j_!$ restricts to a triangle functor $\cd^b(\mod
C)\rightarrow \cd^-(\mod)$. Let $M\in \cd^b(\mod C)$. For
$n\in\mathbb{Z}$, we have
\begin{eqnarray*}
DH^n(j_!(M))&=&D\Hom_{\cd(\Mod A)}(A,j_!(M)[n])\\
 &\cong& \Hom_{\cd(\Mod A)}(j_!(M)[n],D(A))\\
 &\cong& \Hom_{\cd(\Mod C)}(M,j^!D(A)[-n]).
 \end{eqnarray*}
 Here the first isomorphism follows from the Auslander--Reiten
 formula and the second one follows by adjunction.
Since $j_!(C)=T\in K^b(\proj A)$,
Lemma~\ref{l:restriction-of-functors-to-compact-obj} and
Lemma~\ref{l:restriction-of-right-adjoint} imply that both $j^!(A)$ and
$j^!(D(A))$ belong to $\cd^b(\mod C)$. Consequently, the space
$\Hom_{\cd(\Mod A)}(M,j^!D(A)[-n])$ is finite-dimensional for each
$n$ and vanishes for sufficiently large $n$. Therefore $j_!(M)$ has
right bounded cohomologies, i.e. $j_!(M)\in\cd^-(\mod A)$.

In
particular $j_!j^!(A)\in\cd^-(\mod A)$. In view of (a), we have
$j_!j^!(A)\in\cd^b(\mod A)$. But then the canonical triangle
$X^{\tr}\lten_C X\cong j_!j^!(A)\rightarrow A\rightarrow i_*i^*(A)\rightarrow
j_!j^!(A)[1]$  and Lemma \ref{l:useful-info}(c)
yield that
$i_*i^*(A)$ and $i_*(B)$  belong to $\cd^b(\mod A)$ as well.
This also shows that
$B=\End_A(i_*(B))$ is finite-dimensional.

The last statement holds because the complexes $X$ and $Y$ have finite-dimensional total cohomology and the algebras $C^{op}\ten A$ and $A^{op}\ten B$ are finite-dimensional.
\end{proof}

\begin{remark} The converse of the first statement of
Lemma~\ref{l:finite-sides}(b) is not true: $B$ and $C$ being
finite-dimensional algebras does not imply that $A$ is finite-dimensional.
For an example, take $A$ to be the path algebra of the infinite
Kronecker quiver, i.e. the quiver which has two vertices $1$ and $2$
and which has infinitely many arrows from $1$ to $2$ and no arrows
from $2$ to $1$. Then the projective module $e_1A$ generates a
recollement
$$\xymatrix@!=6pc{\cd(\Mod A/Ae_1A) \ar[r]&\cd(\Mod A) \ar@<+2.5ex>[l]
\ar@<-2.5ex>[l] \ar[r] & \cd(\Mod e_1Ae_1)\ar@<+2.5ex>[l]
\ar@<-2.5ex>[l],}$$ where both $A/Ae_1A$ and $e_1Ae_1$ are
isomorphic to $k$, while $A$ is infinite-dimensional. This example
is taken from \cite[Example 9]{Koenig91}.
\end{remark}

\smallskip

\subsubsection{\sc Construction of
recollements}\label{reminder}\label{sss:construction-of-recollements}

For finite-dimensional algebras $A$, $B$ and $C$ over a field $k$,
every recollement of the form
\begin{align*}\label{eq:recollement-bounded-der}\xymatrix@!=6pc{\cd^b(\mod B) \ar[r]|{i_*=i_!} &\cd^b(\mod A)
\ar@<+2.5ex>[l]|{i^!} \ar@<-2.5ex>[l]|{i^*} \ar[r]|{j^!=j^*} &
\cd^b(\mod C)\ar@<+2.5ex>[l]|{j_*} \ar@<-2.5ex>[l]|{j_!}
}\tag{$R^b$}\end{align*} is given by a pair of compact exceptional objects
 $T=j_!(C),T'=i_*(B)\in \cd(\Mod A)$ with  $T'$ being right orthogonal to $T$,  see \cite[2.2 and 2.5]{AKL3}.
 A similar result holds true for recollements of the form (\ref{eq:recollement-unbounded-der}),
 as recalled in \ref{ss:recollement-of-der-cat}.
 In particular, in both cases the recollement is generated by the compact exceptional object  $T=j_!(C)$.

 Conversely,
every compact exceptional object $T\in \cd(\Mod A)$ with
$C=\End_A(T)$ generates a recollement  $$\xymatrix@!=6pc{\cd(\Mod B)
\ar[r]|{i_*=i_!} &\cd(\Mod A) \ar@<+2.5ex>[l]|{i^!}
\ar@<-2.5ex>[l]|{i^*} \ar[r]|{j^!=j^*} & \cd(\Mod
C)\ar@<+2.5ex>[l]|{j_*} \ar@<-2.5ex>[l]|{j_!} }
$$ where  $B=\rhom_A(T',T')$ for a certain object $T'\cong i_*
i^*(A)\in \cd(\Mod A)$    which occurs, up to isomorphism, in the
canonical triangle $j_!j^!(A)\rightarrow A\rightarrow i_*
i^*(A)\rightarrow j_!j^!(A)[1]$.

We  stress, however, that  $B$  is {\it just a dg algebra}, unless
the object $T'$ is exceptional, in which case  we obtain a
recollement of the form (\ref{eq:recollement-unbounded-der}) with
$B\cong\End_A(T')$.

 In the latter case, the recollement is  also
induced by a homological ring epimorphism $\lambda:A\to B$, that is, a ring epimorphism such that
$\Tor_i^A(B,B)=0$ for all $i>0$. Then, up to isomorphism,
  $$i_*=\lambda^*=?\lten_B B_A={\mathbf R} \Hom_B(_AB,?),\; i^*=?\lten_A B,\;
i^!={\mathbf R} \Hom_A(B_A,?),\text{ and  }j^*=?\lten_A X$$ with $X$ given by the triangle $X\to A\stackrel{\lambda}{\to} B\to X[1].$
For
details we refer to \cite[1.6 and 1.7]{AKL1}. A method for
determining $T'$ is given in \cite[Appendix]{AKL1}.

In the examples in Sections \ref{s:lifting-and-restricting-recollements}, \ref{s:derived-simplicity}, and \ref{JH}, we will often consider the following special case of the construction above. Let $T=eA$ where $e=e^2\in A$ is an idempotent such that $AeA$ is a \emph{stratifying ideal}, that is, $Ae\lten_{eAe} eA\cong AeA$, or equivalently, $\lambda:A\to A/AeA$ is a homological ring epimorphism. Then $T$ generates  a  recollement of the form (\ref{eq:recollement-unbounded-der}) where $C= eAe$ and $B= A/AeA$, and $T'\cong B_A$ occurs in the canonical triangle $AeA\to A\stackrel{\lambda}{\to} B\to AeA[1].$ Here the functors   $i_*,\,i^*,\,i^!$ are as above, and $$j_!=?\lten_{eAe} eA,\;j^*={\mathbf R}\text{Hom}_A(eA,?)=?\lten_A Ae,\,j_*={\mathbf R}\text{Hom}_{eAe}(Ae,?).$$

\begin{lemma} \label{l:criterion-stratifying-ideal}
Let $A$ be a $k$-algebra and $e\in A$ be an idempotent. Assume that $A/AeA$, as a (right) $A$-module, admits a projective resolution with components in $\add(eA)$ except in degree $0$. Then $AeA$ is a stratifying ideal of $A$ and the projection $A\ra A/AeA$ is a homological epimorphism.
\end{lemma}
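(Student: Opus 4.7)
The plan is to reduce the lemma to showing that the projection $\lambda: A \to B$, where $B:=A/AeA$, is a homological ring epimorphism; the paragraph preceding the lemma already records that this property of $\lambda$ is equivalent to $AeA$ being a stratifying ideal of $A$. Since $\lambda$ is surjective, it is automatically a ring epimorphism, so all that remains is to verify $\Tor_i^A(B,B)=0$ for every $i>0$.

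To compute these Tor groups I will tensor the hypothesized projective resolution $P_\bullet \to B$, whose components satisfy $P_i \in \add(eA)$ for $i \geq 1$, by ${}_AB$ over $A$. The key observation is that $eA \otimes_A B = 0$: via the standard identification $eA \otimes_A N \cong eN$ for any left $A$-module $N$ (obtained from the equality $e = e\cdot e$ inside $eA$, which lets one move $e$ across the tensor), this reduces to $eB=0$; and $eB=0$ holds because $e = e\cdot e$ lies in $AeA$, so $e$ maps to zero in $B$. Consequently $P \otimes_A B = 0$ for every $P \in \add(eA)$.

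Applying this to the given resolution, the complex $P_\bullet \otimes_A B$ vanishes in every positive degree, and therefore $\Tor_i^A(B,B) = H_i(P_\bullet \otimes_A B) = 0$ for all $i \geq 1$, as required. This shows $\lambda$ is a homological ring epimorphism, and the equivalence recalled above then yields that $AeA$ is a stratifying ideal. No step presents a genuine obstacle; the argument rests entirely on the elementary vanishing $eA \otimes_A B \cong eB = 0$ combined with the shape of the resolution hypothesised in the statement.
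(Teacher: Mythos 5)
Your proof is correct and follows exactly the route the paper takes: the paper's own proof simply asserts that $\Tor_i^A(A/AeA,A/AeA)=0$ for $i>0$ "under the above assumptions," and your computation via $eA\otimes_A B\cong eB=0$ is precisely the detail being suppressed there. Nothing is missing and no comparison is needed.
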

\begin{proof}
The surjection $A\ra A/AeA$ is a ring epimorphism, and under the above assumptions,
$\Tor_i^A(A/AeA,A/AeA)=0$ for $i>0$.
The desired result follows.
\end{proof}

\smallskip
\subsubsection{\sc Derived simple algebras}\label{dersimple}
Derived simplicity (sometimes also called derived simpleness)
of an algebra was introduced by Wiedemann
\cite{W}. Let $\cd=K^b(\proj), \cd_{fl}, \cd^b(\mod), \cd^b(\Mod),
\cd^-(\Mod)$ or $\cd(\Mod)$. By definition, a $k$-algebra $A$ is
said to be {\em derived simple} with respect to $\cd$ (or
\emph{$\cd$-simple} for short) if there is no nontrivial $\cd$-recollement, namely, a recollement
of the form
$$\xymatrix@!=6pc{\cd_B \ar[r]|{i_*=i_!} &\cd_A \ar@<+2.5ex>[l]|{i^!}
\ar@<-2.5ex>[l]|{i^*} \ar[r]|{j^!=j^*} & \cd_C\ar@<+2.5ex>[l]|{j_*}
\ar@<-2.5ex>[l]|{j_!} }$$ where $B$ and $C$ are also $k$-algebras.
We reserve the short term {\em derived simple} for
$\cd(\mathrm{Mod})$-simple.

Fields, and more generally, local algebras are derived simple. The first
examples of derived simple algebras over a field with more than one
simple module have been constructed by Wiedemann \cite{W} and by
Happel \cite{Happel91}. In \cite{LY} derived simplicity has been
established for blocks of group algebras of finite groups, in any
characteristic, and also for indecomposable symmetric algebras of
finite representation type, provided that the base ring $k$ is a
field.

A $k$-algebra is said to be {\em indecomposable} if it is not
isomorphic to a direct product of two nonzero rings. A non-trivial
decomposition of a ring yields a non-trivial recollement. Hence a
decomposable ring never is derived simple in any sense.

\subsubsection{\sc Stratifications}\label{strat}
Having defined derived
simplicity, we can study stratifications.
Roughly speaking, a
\emph{stratification} is a way of breaking up a given derived
category into simple pieces using recollements. More rigorously, let $\cd=K^b(\proj), \cd_{fl}, \cd^b(\mod), \cd^b(\Mod),
\cd^-(\Mod)$ or $\cd(\Mod)$ and let $A$ be an algebra; a
stratification of $\cd_A$ (or a $\cd$-stratification of $A$) is a full rooted binary tree whose root is the given
derived category $\cd_A$, whose nodes are derived categories of type $\cd$ of algebras and whose
leaves are derived categories of type $\cd$ of $\cd$-simple algebras such that a node is a recollement of its two child
nodes unless it is a leaf.  The leaves are called the \emph{simple factors} of the
stratification. By abuse of language, we will also call the algebras whose derived categories are the leaves the simple factors of the stratification.

\medskip

\subsubsection{\sc Finiteness of global dimension}\label{sss:finiteness-of-global-dim}

A major reason for the interest in recollements and stratifications
of derived module categories is that the algebras $B$ and $C$
in the two outer terms frequently are less complicated than $A$.  One
can then study $A$ by investigating the  two outer algebras. This
reduction works well with respect to homological dimensions.  We will consider the global dimension.

Let $A$, $B$ and $C$ be finite-dimensional algebras over a field
forming a $\dbmod$-recollement of the form (\ref{eq:recollement-bounded-der}). Wiedemann
\cite[Lemma 2.1]{W} showed that $A$ has finite global dimension if
and only if so do $B$ and $C$. This was generalised in
\cite[Corollary 5]{Koenig91} to $\cd^-(\text{Mod})$-recollements of algebras over general commutative rings. For completeness we include a
detailed proof here.

\begin{prop}\emph{(\cite[Corollary 5]{Koenig91})} \label{fingldim}
Let $A$, $B$ and $C$ be
$k$-algebras and assume there is a recollement of the following type
\begin{align*}\xymatrix@!=6pc{\cd^-(\Mod B) \ar[r] &\cd^-(\Mod A) \ar@<+1.5ex>[l]
\ar@<-1.5ex>[l] \ar[r] & \cd^-(\Mod C) \ar@<+1.5ex>[l]
\ar@<-1.5ex>[l] }.\label{eq:recollement-bounded-above}\tag{$R^-$}\end{align*}Then $A$ is of finite global dimension if and
only if so are $B$ and $C$.
\end{prop}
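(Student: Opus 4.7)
Plan: I will prove both implications by working with the canonical triangle of the recollement and the long exact $\Ext$ sequence it induces. Applying $\RHom_A(-,N)$ to the canonical triangle
\[
j_!j^!(M) \longrightarrow M \longrightarrow i_*i^*(M) \longrightarrow j_!j^!(M)[1]
\]
and using the adjunctions $(j_!,j^!)$, $(i^*,i_*)$, $(i_!,i^!)$ together with $j^! = j^*$ yields, for $A$-modules $M,N$, the long exact sequence
\[
\ldots \to \Ext^k_B(i^*M,i^!N) \to \Ext^k_A(M,N) \to \Ext^k_C(j^!M,j^!N) \to \Ext^{k+1}_B(i^*M,i^!N) \to \ldots
\]
All estimates of global dimension will be extracted from this sequence.

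For the backward direction, assume $\gldim B, \gldim C < \infty$. The plan is to show that, for any $A$-module $M$, the objects $i^*M$, $j^!M$ lie in $\cd^b$ of their respective categories with cohomological amplitude bounded by a constant depending only on the recollement data (not on $M$), and similarly for $i^!N$, $j^!N$. Granting such uniform amplitude bounds, the hyperext spectral sequence over an algebra of finite global dimension gives $\Ext^{>\,\gldim B + \mathrm{const}}_B(i^*M,i^!N)=0$, and analogously over $C$. The long exact sequence then forces $\Ext^k_A(M,N)=0$ for $k$ larger than a uniform constant, i.e.\ $\gldim A < \infty$.

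For the forward direction, assume $\gldim A < \infty$. Using full faithfulness of $i_*$ and $j_!$ on $\cd^-$, for $B$-modules $M_1,M_2$ one has $\Ext^k_B(M_1,M_2) = \Hom_{\cd(\Mod A)}(i_*M_1, i_*M_2[k])$, and similarly for $C$ via $j_!$. The same kind of uniform amplitude bounds on $i_*M_j$ and $j_!N_j$ in $\cd^b(\Mod A)$, combined with the hyperext spectral sequence over $A$, yield $\Ext^{\gg 0}_B(M_1,M_2)=0$ uniformly in $M_1,M_2$, whence $\gldim B < \infty$; the argument for $C$ is parallel.

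The main obstacle is establishing these uniform cohomological amplitude bounds on the six functors applied to modules. At the $\cd(\Mod)$-level they follow from compactness of $T=j_!(C)$ and $T'=i_*(B)$ via Lemma~\ref{l:useful-info}(e)--(f) and Lemma~\ref{l:finite-sides}(a). For a $\cd^-(\Mod)$-recollement one must work harder: starting from the canonical triangle applied to $A$ one shows that $i^*(A)$, $j^!(A)$, $i^!(A)$ lie in $\cd^b$ of their respective categories, which then allows one to build right-bounded bimodule models of $T$ and $i^*(A)$ as in Lemma~\ref{l:finite-sides}(a) and read off uniform amplitude bounds for the action of $i^*,i^!,j^!,j_!,i_*$ on arbitrary modules. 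This bimodule bookkeeping is the sole technical point; everything else is formal manipulation of the long exact Ext sequence.
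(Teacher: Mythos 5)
Your strategy (the long exact sequence of Hom-groups obtained from the canonical triangles, combined with amplitude bounds) is genuinely different from the paper's proof, which instead characterises finite global dimension by the equality $\cd^b(\Mod A)=K^b(\Proj A)$, restricts the given recollement to both the $\cd^b(\Mod)$ and the $K^b(\Proj)$ levels, and compares the two restricted recollements term by term. However, your argument has a real gap at exactly the point you flag as "bimodule bookkeeping": the claimed uniform cohomological amplitude bounds are simply false for the two left adjoints $j_!$ and $i^*$. Writing $j_!=?\lten_C X$ and $i^*=?\lten_A Y$ as in Lemma~\ref{l:finite-sides}(a), the statement that $j_!N$ has (uniformly) bounded cohomology for all $C$-modules $N$ is equivalent to ${}_CX$ having finite flat dimension over $C$; this is precisely the non-automatic condition that governs whether the recollement extends upwards or restricts to $\cd^b$ (Lemma~\ref{l:existence-of-left-adjoint}, Proposition~\ref{p:ladder-extension}(b), Theorem~\ref{t:restricting-recollements-to-dbmod}), and it fails in concrete examples (in Example~\ref{e:three-recollements}(b) one has ${}_{\End(M)}M\cong k\oplus k[1]$ with $k$ of infinite flat dimension over $k[x]/x^2$, so $j_!$ applied to the simple module has unbounded cohomology, even though that recollement restricts to $\cd^-(\Mod)$ and to $K^b(\proj)$). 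Lemmas~\ref{l:useful-info}(e)--(f) and \ref{l:finite-sides}(a), which you cite, only control the images of $A$ itself and the right adjoints $i_*$, $j^!$, $j_*$; they say nothing about $j_!$ or $i^*$ applied to an arbitrary module. Consequently the phrase ``similarly for $C$ via $j_!$'' in your forward direction does not go through as stated: $\Hom_A(j_!N_1,j_!N_2[k])$ need not vanish for $k\gg 0$ when $j_!N_1$ is unbounded below, even if $\gldim A<\infty$.

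The argument can be repaired, but only by feeding the global dimension hypotheses back into the amplitude bounds, which is where all the work lies. For the backward direction one must argue: $\gldim C<\infty$ forces $j^!M\in K^b(\Proj C)$ uniformly (since $j^!=\rhom_A(T,?)$ with $T\in K^b(\proj A)$), hence $j_!j^!M\in K^b(\Proj A)$ uniformly (as $j_!$ commutes with coproducts and $j_!(C)=T$), hence $i_*i^*M$ and then $i^*M$ are uniformly bounded via $i_*(B)\in K^b(\Proj A)$ -- the latter being exactly what Proposition~\ref{p:recollement-restricting-to-d-Mod} extracts from the hypothesis that the recollement lives on the $\cd^-(\Mod)$ level. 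For the forward direction the $C$-case should be routed through $j_*$ (whose value $j_*N=\RHom_C(j^*(A),N)$ is uniformly bounded once one shows $\gldim A<\infty$ implies $j^*(A)\in K^b(\Proj C)$) rather than through $j_!$. Finally, even with all bounds in place you only obtain that each module has finite projective dimension with a bound depending on the module; to conclude $\gldim<\infty$ you still need the direct-sum argument that the paper isolates as its opening ``Fact''. None of these steps is impossible, but none of them is the routine consequence of the cited lemmas that your sketch suggests, and together they essentially reconstruct the restriction-of-recollements argument that the paper uses directly.
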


\begin{proof}
We first observe the following
{\em Fact}: An algebra $A$ has finite
global dimension if and only if $\cd^b(\Mod A)=K^b(\Proj A)$. If $A$
has finite global dimension, the equality holds by definition. If
$A$ has infinite global dimension,
there is a module of infinite
projective dimension  (if there are infinitely many $n\in\N$ and
modules $M_n$ with $\pd (M_n)=n$, take the
direct sum of those $M_n$), which is in the bounded
derived category, but not in $K^b(\Proj A)$. This proves the fact.

Now suppose a recollement on $\cd^-(\Mod)$ level is given. If $A$
has finite global dimension, by \cite[Proposition 4]{Koenig91} the
given recollement can be restricted to recollements on $\cd^b(\Mod)$
level and on $K^b(\Proj)$ level. We compare the two recollements:
the middle terms coincide, the left hand terms satisfy $K^b(\Proj B)
\subset \cd^b(\Mod B)$ and the right hand terms also satisfy
$K^b(\Proj C) \subset \cd^b(\Mod C)$. But the inclusion $K^b(\Proj
B) \subset \cd^b(\Mod B)$ in the left hand side of the recollement
implies the converse inclusion $K^b(\Proj C) \supset \cd^b(\Mod C)$
on the right hand side of the recollement. Therefore equality holds
and $C$ and also $B$ have finite global dimension. Conversely if $B$
and $C$ have finite global dimension, the same argument of
restricting recollements works.
\end{proof}

Based on this result, we prove the analogue for $\cd(\Mod)$-recollements.

\begin{prop}\label{p:finiteness-of-global-dimension}
In a recollement of the form (\ref{eq:recollement-unbounded-der}) the algebra $A$ is of finite
global dimension if and only if so are $B$ and $C$.
\end{prop}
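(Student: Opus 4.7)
We adapt Koenig's proof of Proposition~\ref{fingldim} to the unbounded setting, using the characterisation that $\gldim A<\infty$ if and only if $\cd^b(\Mod A)=K^b(\Proj A)$ (proved exactly as the ``Fact'' in the proof of Proposition~\ref{fingldim}, since that argument does not depend on the ambient derived category).

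For the direction $(\Leftarrow)$, assume $\gldim B,\gldim C<\infty$. Take any $M\in\cd^b(\Mod A)$ and apply the canonical triangle
\[
j_!j^!(M)\lra M\lra i_*i^*(M)\lra j_!j^!(M)[1]
\]
to reduce to placing both outer terms in $K^b(\Proj A)$. For the first outer term: by Lemma~\ref{l:useful-info}(e) and $\gldim C<\infty$, $j^!(M)\in\cd^b(\Mod C)=K^b(\Proj C)$, and since $j_!$ is a left adjoint with $j_!(C)=T\in K^b(\proj A)$, it carries $K^b(\Proj C)$ into $K^b(\Proj A)$. The second outer term hinges on the auxiliary fact that $T'=i_*(B)\in K^b(\Proj A)$, which is obtained by a bootstrap: applying the preceding reasoning at $M=A$ shows $j_!j^!(A)\in K^b(\Proj A)$, hence the cone $i_*i^*(A)\in K^b(\Proj A)$; by Lemma~\ref{l:useful-info}(c) together with idempotent completeness of $K^b(\Proj A)$ in $\cd(\Mod A)$, we obtain $T'\in\tria(i_*i^*(A))\subseteq K^b(\Proj A)$. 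Returning to general $M$, the triangle places $i_*i^*(M)\in\cd^b(\Mod A)$, so by Lemma~\ref{l:compact-and-finite-dimensional}(b) together with fully faithfulness of $i_*$ and $\gldim B<\infty$ we get $i^*(M)\in K^b(\Proj B)$; applying $i_*$, which sends $\Proj B$ into $\Add T'\subseteq K^b(\Proj A)$, yields $i_*i^*(M)\in K^b(\Proj A)$, and hence $M\in K^b(\Proj A)$.

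For the direction $(\Rightarrow)$, assume $\gldim A<\infty$. For $B$ we argue directly: for any $N\in\Mod B$, $i_*(N)\in\cd^b(\Mod A)=K^b(\Proj A)$ by Lemma~\ref{l:useful-info}(e), and the left adjoint $i^*$, which sends $A$ to $i^*(A)\in K^b(\proj B)$, maps $K^b(\Proj A)$ into $K^b(\Proj B)$; thus $N\cong i^*i_*(N)\in K^b(\Proj B)$, so $\pd_B N<\infty$ for every $N\in\Mod B$, and $\gldim B<\infty$ follows since projective dimension commutes with direct sums. For $C$, the plan is to reduce to Proposition~\ref{fingldim} by showing that the $\cd(\Mod)$-recollement restricts to a $\cd^-(\Mod)$-recollement. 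Four of the six functors restrict by Lemma~\ref{l:useful-info}(e); for $i^!$, use that $T'\in\cd^b(\Mod A)=K^b(\Proj A)$ together with a Hom-complex computation (the Hom between a bounded complex of projectives and a right-bounded one vanishes in sufficiently high positive degrees) to conclude $H^n(i^!(X))=\Hom_A(T',X[n])=0$ for $n\gg 0$ whenever $X\in\cd^-(\Mod A)$, and hence $i^!(X)\in\cd^-(\Mod B)$. An analogous argument, using the bimodule description of $j^!(A)$ in Lemma~\ref{l:finite-sides}(a), produces the restriction of $j_*$ to $\cd^-(\Mod)$, so the $\cd^-(\Mod)$-recollement is in hand and Proposition~\ref{fingldim} delivers $\gldim C<\infty$.

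The main obstacle is the verification that $j_*$ restricts to $\cd^-$ in the $(\Rightarrow)$ direction for $C$, which requires extracting enough structure from $j^!(A)\in\cd^b(\Mod C)$ to control its projective resolution over $C$; in the $(\Leftarrow)$ direction, the bootstrap establishing $T'\in K^b(\Proj A)$ from $\gldim C<\infty$ alone is the central technical point.
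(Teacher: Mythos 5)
Your argument is genuinely different from the paper's for most of the statement, and correct there. The paper proves only that $i_*(B)\in K^b(\Proj A)$ holds under either hypothesis ($\gldim A<\infty$ or $\gldim C<\infty$), then cites \cite[Theorem 1]{Koenig91} to restrict the recollement to $\cd^-(\Mod)$ and lets Proposition~\ref{fingldim} do all the work in both directions. You instead run the d\'evissage directly: your bootstrap establishing $i_*(B)\in K^b(\Proj A)$ from $\gldim C<\infty$ reproduces the paper's computation exactly, your canonical-triangle argument for the backward implication is a correct self-contained alternative, and your derivation of $\gldim B<\infty$ from $\gldim A<\infty$ via $N\cong i^*i_*(N)$ is a clean shortcut the paper does not take. (Minor point: you invoke Lemma~\ref{l:compact-and-finite-dimensional}(b) with $P=i_*(B)\in K^b(\Proj A)$ rather than $K^b(\proj A)$; the total-Hom-complex argument still gives the needed vanishing, but the lemma as stated does not apply.)

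The genuine gap is exactly where you flag it: the implication $\gldim A<\infty\Rightarrow\gldim C<\infty$. Your plan of restricting to $\cd^-(\Mod)$ and applying Proposition~\ref{fingldim} is the right one, but your proposed verification that $j_*$ restricts cannot work. The ``analogous argument'' gives $H^n(j_*(Y))=\Hom_C(j^!(A),Y[n])$, and to make this vanish for $n\gg 0$ and $Y\in\cd^-(\Mod C)$ you would need $j^!(A)$ to lie in $K^b(\Proj C)$, i.e.\ to have finite projective dimension over $C$; at this stage you only know $j^!(A)\in\cd^b(\Mod C)$, and upgrading that to $K^b(\Proj C)$ is essentially the finiteness of $\gldim C$ you are trying to prove --- the argument is circular. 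The correct treatment of $j_*$ goes through the canonical triangles instead: given $Y\in\cd^-(\Mod C)$, write $Y\cong j^*j_!(Y)$ with $j_!(Y)\in\cd^-(\Mod A)$, so that $j_*(Y)\cong j_*j^*(j_!Y)$ is the cone of $i_*i^!(j_!Y)\to j_!Y$; both terms lie in $\cd^-(\Mod A)$, because $i^!$ restricts to $\cd^-(\Mod)$ by your own argument (using $i_*(B)\in K^b(\Proj A)$) and $i_*$ restricts since $H^n(i_*(N))\cong\Hom_B(i^*(A),N[n])$ with $i^*(A)\in K^b(\proj B)$. This is precisely the content of \cite[Theorem 1]{Koenig91} (equivalently Proposition~\ref{p:recollement-restricting-to-d-Mod}): $j_!(C)\in K^b(\proj A)$ together with $i_*(B)\in K^b(\Proj A)$ already forces the restriction to $\cd^-(\Mod)$. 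With that triangle argument (or that citation) inserted, your proof closes.
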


\begin{proof} We will show below that if $\gldim(A)<\infty$ or
$\gldim(C)<\infty$ then $i_*(B)\in K^b(\Proj A)$. As
$j_!(C)\in K^b(\proj A)$, ~\cite[Theorem
1]{Koenig91} implies the existence of a recollement
$$\xymatrix@!=6pc{\cd^-(\Mod B) \ar[r]&\cd^-(\Mod A) \ar@<+1.5ex>[l]
\ar@<-1.5ex>[l] \ar[r] & \cd^-(\Mod C)\ar@<+1.5ex>[l]
\ar@<-1.5ex>[l]}.$$ Applying the previous proposition then
concludes the proof of the assertion.

What is left to show is that $i_*(B)\in K^b(\Proj A)$.
If $\gldim(A)<\infty$, then by Lemma~\ref{l:useful-info} (e) the
object $i_*(B)\in\cd^b(\Mod A)\cong K^b(\Proj A)$. Suppose
$\gldim(C)<\infty$. Again using Lemma~\ref{l:useful-info} (e)
yields $j^*(A)\in \cd^b(\Mod C)=K^b(\Proj C)$. On the
other hand, since $j_!$ commutes with direct sums and $j_!(C)\in
K^b(\proj A)$, it follows that $j_!(\Proj C)\subseteq K^b(\Proj A)$.
Hence $j_!(K^b(\Proj C))\subseteq K^b(\Proj A)$. Therefore,
$j_!j^*(A)\in K^b(\Proj A)$, and hence $i_*i^*(A)\in K^b(\Proj A)$
due to the triangle
\[\xymatrix{j_!j^*(A)\ar[r]& A\ar[r] & i_*i^*(A)\ar[r] & j_!j^*(A)[1].}\]
By Lemma~\ref{l:useful-info} (c), $i_*(B)\in K^b(\Proj A)$.
\end{proof}

\section{Ladders}\label{s:ladder}

When considering all possible recollements for a fixed triangulated
or derived category, it is convenient to group them, for
example, into ladders. A \emph{ladder} $\cl$ is a finite or infinite
diagram of triangulated categories and triangle functors
\vspace{-10pt}\[{\setlength{\unitlength}{0.7pt}
\begin{picture}(200,170)
\put(0,70){$\xymatrix@!=3pc{\mathcal{C}'
\ar@<+2.5ex>[r]|{i_{n-1}}\ar@<-2.5ex>[r]|{i_{n+1}} &\mathcal{C}
\ar[l]|{j_n} \ar@<-5.0ex>[l]|{j_{n-2}} \ar@<+5.0ex>[l]|{j_{n+2}}
\ar@<+2.5ex>[r]|{j_{n-1}} \ar@<-2.5ex>[r]|{j_{n+1}} &
\mathcal{C}''\ar[l]|{i_n} \ar@<-5.0ex>[l]|{i_{n-2}}
\ar@<+5.0ex>[l]|{i_{n+2}}}$} 
\put(52.5,10){$\vdots$} 
\put(137.5,10){$\vdots$} 
\put(52.5,130){$\vdots$} 
\put(137.5,130){$\vdots$} 
\end{picture}}
\]
such that any three consecutive rows form a recollement. The rows are labelled by a subset of $\mathbb{Z}$ and multiple occurence of the same recollement is allowed. This
definition is taken from~\cite[Section
1.5]{BeilinsonGinzburgSchechtman88} with a minor modification. The \emph{height} of a ladder is the number of recollements
contained in it (counted with multiplicities). It is an element of $\mathbb{N}\cup \{0,\infty\}$. A recollement
is considered to be a ladder of height 1.

The ladder $\cl$ induces a \emph{TTF tuple} $(\mathrm{Im} i_n)_n$, and is
uniquely determined, up to equivalence, by any entry of the tuple.
A ladder can be \emph{unbounded}, \emph{bounded above},
\emph{bounded below} or \emph{bounded} in the obvious
sense, and the corresponding TTF tuple is \emph{unbounded},
\emph{left bounded}, \emph{right bounded} or \emph{bounded}.
The ladder $\cl$ is a \emph{ladder of
derived categories} if $\cc'$, $\cc$ and $\cc''$ are derived
categories of algebras. A ladder is \emph{complete} if it is not a
proper subladder of any other ladder. Then the induced
TTF tuple also is said to be complete.

\begin{lemma}\label{l:complete-ladder} Let $\cc$ be a compactly generated triangulated
category. Let $(\ldots,\cc_{-2},\cc_{-1},\cc_0)$ be a TTF tuple of
$\cc$, and assume that $\cc_{-1}$ is compactly generated by one
object. Then this TTF tuple is complete if and only if some (or any)
compact generator of $\cc_{-1}$ is not compact in $\cc$.
\end{lemma}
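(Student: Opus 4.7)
The tuple is complete at the right end precisely when it cannot be enlarged by a further term $\cc_1$. Because the TTF triple $(\cc_{-2},\cc_{-1},\cc_0)$ already supplies the $t$-structure $(\cc_{-1},\cc_0)$, enlarging to a TTF tuple $(\ldots,\cc_{-1},\cc_0,\cc_1)$ is equivalent to producing a further $t$-structure of the form $(\cc_0,\cc_1)$; and as $\cc_1$ is then forced to equal $\cc_0^{\perp}$, this is in turn equivalent to the embedding $\cc_0\hookrightarrow\cc$ admitting a right adjoint. My plan is to translate this adjoint-existence condition into the desired compactness statement about generators of $\cc_{-1}$ via Neeman's smashing-localisation theorem.

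For the implication ``$T_{-1}$ compact in $\cc$ $\Rightarrow$ the tuple is non-complete'', I first note that $\cc_{-1}$ is closed under arbitrary coproducts in $\cc$ (because $i_*=i_!$ preserves coproducts, being simultaneously a left and a right adjoint), so the triangulated subcategory $\cc_{-1}$ of $\cc$ is in fact the smallest one in $\cc$ closed under coproducts and containing $T_{-1}$. By Neeman's smashing theorem applied to the compact object $T_{-1}$ of $\cc$, the Verdier quotient $\cc/\cc_{-1}$ is compactly generated and the quotient functor $j^*$ preserves compact objects; equivalently, its right adjoint $j_*$ preserves coproducts. Hence $\cc_0=\Img(j_*)\cong\cc/\cc_{-1}$ is a compactly generated triangulated subcategory of $\cc$ that is closed under coproducts, and the inclusion $\cc_0\hookrightarrow\cc$ preserves coproducts. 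Brown representability then provides the required right adjoint, and setting $\cc_1:=\cc_0^{\perp}$ extends the tuple.

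For the converse, suppose the tuple extends to $(\cc_{-1},\cc_0,\cc_1)$; then $\cc_0$ is the middle term of a new recollement, in which $\cc_{-1}$ appears as the essential image of the leftmost fully faithful functor, say $j'_!$, with $j'_!$ left adjoint to $(j')^*$. The functor $(j')^*$ is itself a left adjoint (of $j'_*$) and so preserves coproducts; therefore $j'_!$ preserves compact objects by the standard adjoint-compactness correspondence. Transporting any compact generator $T$ of $\cc/\cc_0$ along the equivalence $j'_!\colon\cc/\cc_0\xrightarrow{\ \sim\ }\cc_{-1}$ thus yields a compact generator $j'_!(T)$ of $\cc_{-1}$ which is still compact in $\cc$.

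The ``some or any'' part of the statement follows from the fact that any two compact generators of the compactly generated category $\cc_{-1}$ generate the same thick subcategory $(\cc_{-1})^c$; the thick-subcategory closure inside $\cc$ of a compact object of $\cc$ consists of compact objects of $\cc$, so compactness in $\cc$ transfers between any two compact generators of $\cc_{-1}$. The main technical hurdle is the appeal to Neeman's smashing theorem in the first direction; verifying that the TTF-triple setup satisfies its hypothesis reduces, via Proposition~\ref{p:recollement-corresponds-to-TTF} and the coproduct-preservation of $i_*=i_!$, to the observation that $\cc_{-1}$ coincides with the localising subcategory of $\cc$ generated by $T_{-1}$.
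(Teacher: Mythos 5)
Your argument is correct. Note, though, that the paper does not actually prove this lemma: it simply cites \cite[Lemma 4.4.8 and Remark 5.2.6]{NZ}, so what you have written is a self-contained substitute for that citation rather than an alternative to an argument in the text. The ingredients you use are the expected ones and they fit together properly: the forward implication is the standard smashing-localisation argument (since $\cc_{-1}={}^\perp\cc_0$ is closed under coproducts and compactly generated by $T_{-1}$, it is the localising subcategory of $\cc$ generated by $T_{-1}$; Neeman's theorem then makes $\cc_0\cong\cc/\cc_{-1}$ compactly generated and closed under coproducts in $\cc$, and Brown representability produces the right adjoint of the inclusion, which yields the $t$-structure $(\cc_0,\cc_0^\perp)$ --- this last step is the standard Bousfield colocalisation argument, essentially Lemma~\ref{l:complete-ses-to-t-str} of the paper, and you should say so explicitly since you only assert the equivalence). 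The converse via ``a left adjoint of a coproduct-preserving functor preserves compacts'' is exactly right, as is the reduction of ``some or any'' to the fact that the compacts of $\cc_{-1}$ form the thick subcategory generated by any one compact generator, so compactness in $\cc$ propagates through $\thick(T_{-1})$. In short: correct, complete modulo one explicitly citable standard step, and in substance the same proof that the cited source gives.
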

\begin{proof} The pair $(\cc_{-1},\cc_0)$ can be
completed to a TTF triple $(\cc_{-1},\cc_0,\cc_1)$ if and only if a
(or any) compact generator of $\cc_{-1}$ is compact in $\cc$,
see~\cite[Lemma 4.4.8 and Remark 5.2.6]{NZ}.
\end{proof}

\begin{proposition}\label{p:ladder-extension}
Let $A$, $B$ and $C$ be $k$-algebras forming a recollement
\begin{align*}\xymatrix@!=6pc{\cd(\Mod B) \ar[r]|{i_*=i_!} &\cd(\Mod A) \ar@<+2.5ex>[l]|{i^!}
\ar@<-2.5ex>[l]|{i^*} \ar[r]|{j^!=j^*} & \cd(\Mod
C)\ar@<+2.5ex>[l]|{j_*} \ar@<-2.5ex>[l]|{j_!} }.
\label{eq:recollement-unbounded-ladder}\tag{$R$}\end{align*}
\begin{itemize}
\item[(a)] The recollement
 can be
extended one step downwards (in the obvious sense) if and only
if $j_*$ (equivalently $i^!$) has a right adjoint. This occurs precisely
when $j^!$ (equivalently $i_*$) restricts to $K^b(\proj)$.
\item[(b)] The recollement
can be extended one step upwards if and only if $j_!$
(equivalently $i^*$) has a left adjoint. If $A$ is a finite-dimensional algebra
over a field, this occurs precisely when $j_!$  (equivalently $i^*$) restricts
to $\cd^b(\mod)$.
\end{itemize}
\end{proposition}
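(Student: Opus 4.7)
My approach is to reduce each extension problem to the existence of a single adjoint via the short exact sequences of triangulated categories built into the recollement $(R)$, and then to characterize that adjoint concretely. For part (a), the relevant short exact sequence is
\[
0\longrightarrow \cd(\Mod C)\xrightarrow{j_*}\cd(\Mod A)\xrightarrow{i^!}\cd(\Mod B)\longrightarrow 0,
\]
and Lemma~\ref{l:complete-ses-to-t-str} applied to it will give the equivalence between $j_*$ and $i^!$ having a right adjoint. For part (b), the relevant short exact sequence is
\[
0\longrightarrow \cd(\Mod C)\xrightarrow{j_!}\cd(\Mod A)\xrightarrow{i^*}\cd(\Mod B)\longrightarrow 0,
\]
which analogously yields the equivalence between $j_!$ and $i^*$ having a left adjoint. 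In either case, once the new adjoint exists it produces an additional $t$-structure on $\cd(\Mod A)$ which, together with the TTF triple $(\Img j_!,\Img i_*,\Img j_*)$ of the given recollement, assembles into a new TTF triple and hence, by Proposition~\ref{p:recollement-corresponds-to-TTF}, corresponds to a new recollement extending the original one by one step; conversely, any such extension automatically provides the required adjoint.

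For the concrete characterization in part (a), I plan to use Brown representability: since $\cd(\Mod A)$ is compactly generated by $A$, the functor $j_*$ has a right adjoint iff it preserves arbitrary coproducts. Via the adjunction $(j^*,j_*)$ and the standard equivalence ``a right adjoint preserves coproducts iff its left adjoint preserves compact objects'', this is equivalent to $j^*=j^!$ sending compacts to compacts, i.e., to $j^!$ restricting to $K^b(\proj)$, since compact objects in $\cd(\Mod R)$ are precisely those of $K^b(\proj R)$. The equivalent statement with $i_*$ follows by running the same argument on the pair $(i^*,i_*)$ and invoking the equivalence ``$i^!$ has a right adjoint iff $j_*$ has a right adjoint'' from above.

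For the finite-dimensional characterization in part (b), I would use the bimodule description $j_!={?}\lten_C X$ from Lemma~\ref{l:finite-sides}(a), where $X$ is a right bounded complex of projective $C$-$A$-bimodules. Applying Lemma~\ref{l:existence-of-left-adjoint} with its $A,B$ replaced by $C,A$, and using the finite-dimensional equivalence (i)$\Leftrightarrow$(ii)$\Leftrightarrow$(iii) of that lemma, I obtain that $j_!$ has a left adjoint iff $j_!$ restricts to $\cd_{fl}$; under the finite-dimensional hypothesis this coincides with $\cd^b(\mod)$. The version for $i^*$ follows analogously via $i^*={?}\lten_A Y$ from the same lemma.

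The main obstacle I anticipate is confirming that a single new adjoint genuinely produces an extended recollement rather than just an extra functor; this will be handled cleanly by combining Lemma~\ref{l:complete-ses-to-t-str} with Proposition~\ref{p:recollement-corresponds-to-TTF}. A secondary subtlety is that the restriction characterization in (b) really does need the finite-dimensional hypothesis, since the key implication (iii)$\Rightarrow$(i) of Lemma~\ref{l:existence-of-left-adjoint} relies on it; this is exactly why the second half of (b) is conditional while the first is unconditional.
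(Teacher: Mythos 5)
Your proposal is essentially correct and shares the paper's backbone: both reduce the first halves of (a) and (b) to Lemma~\ref{l:complete-ses-to-t-str}, and both obtain the second half of (b) from the equivalence (ii)$\Leftrightarrow$(iii) in Lemma~\ref{l:existence-of-left-adjoint} applied to the bimodule descriptions of $j_!$ and $i^*$ from Lemma~\ref{l:finite-sides}(a). Where you diverge is in the second half of (a): the paper invokes Lemma~\ref{l:complete-ladder} to translate ``extends downwards'' into ``$i_*(B)$ is compact'', then uses Lemma~\ref{l:restriction-of-functors-to-compact-obj}, and defers the equivalence with the $j^!$-statement to Lemma~\ref{l:restriction-to-compact-for-row}(a); you instead argue directly via Brown representability and the standard duality ``a right adjoint preserves coproducts if and only if its left adjoint preserves compact objects''. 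Your route is more self-contained and avoids the forward reference to Lemma~\ref{l:restriction-to-compact-for-row}, at the cost of invoking the general compactly-generated machinery explicitly rather than through the packaged Lemma~\ref{l:complete-ladder}; the conclusions agree. One small point you should make explicit in (b): condition (ii) of Lemma~\ref{l:existence-of-left-adjoint} is not ``$j_!$ has a left adjoint'' but ``$j_!$ has a left adjoint \emph{which restricts to} $K^b(\proj)$''. The implication you need, namely that mere existence of the left adjoint already forces it to restrict to $K^b(\proj)$, holds because once the recollement extends upwards the new left adjoints form the top row of a recollement and hence restrict to $K^b(\proj)$ by Lemma~\ref{l:useful-info}(e) --- the paper states this explicitly, and without it your chain ``extends upwards $\Rightarrow$ left adjoint exists $\Rightarrow$ (ii) $\Rightarrow$ (iii)'' has a hairline gap. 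Your observation that the finite-dimensionality hypothesis enters exactly through (iii)$\Rightarrow$(i) of that lemma is correct and matches the paper.
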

\begin{proof} The first statements of both (a) and (b) follow from
Lemma~\ref{l:complete-ses-to-t-str}. Further, it follows from Lemma~\ref{l:complete-ladder}  that the recollement
 can be
extended downwards  if and only if $i_*(B)$ is compact, which means by Lemma~\ref{l:restriction-of-functors-to-compact-obj} that $i_*$ restricts to $K^b(\proj)$. This proves the second statement
of (a) concerning $i_*$; the equivalence with the statement in parentheses
is part of Lemma \ref{l:restriction-to-compact-for-row} below, which
will be proven without using any result of the present section.
The second statement
of (b) is just the equivalence of (ii) and (iii) in  Lemma~\ref{l:existence-of-left-adjoint}: Indeed, both $i^*$ and $j_!$ are derived tensor products, by Lemma \ref{l:finite-sides} (a). Moreover, the left adjoints of $i^*$ and $j_!$ always restrict to $K^b(\proj)$ by Lemma~\ref{l:useful-info} (e), since they form the top row of  a recollement.
\end{proof}

Proposition~\ref{p:ladder-extension} can be extended by induction as follows. Let $X$ and $Y$ be as in Lemma~\ref{l:finite-sides}(a) and define $X_n$, $Y_n$ recursively by setting $X_0=X$, $Y_0=Y$ and
\[
X_n = \begin{cases}
(X_{n-1})^{\tr_A} & \text{if } n \text{ is odd and } n>0\\
(X_{n-1})^{\tr_C} & \text{if } n \text{ is even and } n>0\\
(X_{n+1})^{\tr_{C^{op}}} & \text{if } n \text{ is odd and } n<0\\
(X_{n+1})^{\tr_{A^{op}}} & \text{if } n \text{ is even and } n<0
\end{cases},~~
Y_n =\begin{cases}
(Y_{n-1})^{\tr_B} & \text{if } n \text{ is odd and } n>0\\
(Y_{n-1})^{\tr_A} & \text{if } n \text{ is even and } n>0\\
(Y_{n+1})^{\tr_{A^{op}}} & \text{if } n \text{ is odd and } n<0\\
(Y_{n+1})^{\tr_{B^{op}}} & \text{if } n \text{ is even and } n<0.
\end{cases}.
\]
All of these are complexes of bimodules.

\begin{proposition}\label{p:unbounded-ladder}
Suppose that $k$ is a field and $A$, $B$, $C$ are finite-dimensional $k$-algebras forming a recollement of the form (\ref{eq:recollement-unbounded-der}). Let $m\in\mathbb{N}$.
\begin{itemize}
\item[(a)] The following statements are equivalent:
\begin{itemize}
\item[(i)] (\ref{eq:recollement-unbounded-der}) can be extended $m$ steps downwards,
\item[(ii)] $(X_{n})_A$ is compact in $\cd(\Mod A)$ for $n$ even with $1\leq n\leq m$ and $(X_{n})_C$ is compact in $\cd(\Mod C)$ for $n$ odd with $1\leq n\leq m$,
\item[(iii)] $(Y_{n})_B$ is compact in $\cd(\Mod B)$ for $n$ even with $1\leq n\leq m$ and $(Y_{n})_A$ is compact in $\cd(\Mod A)$ for $n$ odd with $1\leq n\leq m$.
\end{itemize}
If this is the case, we obtain a ladder of height $m+1$. We label the rows of this ladder by $0,\ldots,m+2$ from the top. For $n$ even (respectively, odd) with $0\leq n\leq m+1$, the two functors of the $n$-th row are $?\lten_A Y_{n}$ and $?\lten_C X_{n}$ (respectively, $?\lten_B Y_{n}$ and $?\lten_A X_{n}$). The two functors of the $(m+2)$-nd row are $\RHom_B(Y_{m+1},?)$ and $\RHom_A(X_{m+1},?)$ if $m$ is odd and $\RHom_A(Y_{m+1},?)$ and $\RHom_C(X_{m+1},?)$ if $m$ is even.

\item[(b)] The following statements are equivalent:
\begin{itemize}
\item[(i)] (\ref{eq:recollement-unbounded-der}) can be extended $m$ steps upwards,
\item[(ii)] ${}_C(X_{n})$ is compact in $\cd(C\mbox{-}\Mod)$ for $n$ even with $1-m\leq n\leq 0$ and ${}_A(X_{n})$ is compact in $\cd(A\mbox{-}\Mod)$ for $n$ odd with $1-m\leq n\leq 0$,
\item[(iii)] ${}_A(Y_{n})$ is compact in $\cd(A\mbox{-}\Mod)$ for $n$ even with $1-m\leq n\leq 0$ and ${}_B(Y_{n})$ is compact in $\cd(B\mbox{-}\Mod)$ for $n$ odd with $1-m\leq n\leq 0$.
\end{itemize}
If this is the case, we obtain a ladder of height $m+1$. We label the rows of this ladder by $-m,\ldots,0,1,2$ from the top. For $n$ even (respectively, odd) with $-m\leq n\leq 1$, the two functors of the $n$-th row  are $?\lten_A Y_{n}$ and $?\lten_C X_{n}$ (respectively, $?\lten_B Y_{n}$ and $?\lten_A X_{n}$). The two functors of the second row are $\RHom_A(Y_{1},?)$ and $\RHom_C(X_{1},?)$.
\end{itemize}
\end{proposition}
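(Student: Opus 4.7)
The approach is induction on $m$, combining Proposition~\ref{p:ladder-extension} (which controls one-step extensions) with the Keller-type bimodule isomorphism $\RHom_R(M,?)\cong {?}\lten_S M^{\tr_R}$ for an $S$-$R$-bimodule $M$ with $M_R$ compact, as employed in Lemma~\ref{l:existence-of-left-adjoint}. I outline part~(a); part~(b) is dual.

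\emph{Base case} $m=0$: The height-$1$ ladder is simply the given recollement with rows labelled $0,1,2$, and conditions (ii)--(iii) are vacuous (empty index ranges). Lemma~\ref{l:finite-sides}(a) supplies all six functor identifications: $(i^*,j_!)=({?}\lten_A Y_0,{?}\lten_C X_0)$ on row $0$, $(i_*,j^!)=({?}\lten_B Y_1,{?}\lten_A X_1)$ on row $1$, and $(i^!,j_*)=(\RHom_A(Y_1,?),\RHom_C(X_1,?))$ on row $2$, matching the ``$m$ even'' instance of the $(m{+}2)$-nd-row formula.

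\emph{Inductive step} ($m\geq 1$): Assume the statement for $m-1$ and consider extending from height $m$ to height $m+1$. By induction, the height-$m$ ladder on rows $0,\ldots,m+1$ exists precisely when (ii) and (iii) hold for $1\leq n\leq m-1$, and its row-$(m{+}1)$ functors are $\RHom_R(X_m,?)$ and $\RHom_S(Y_m,?)$, where $R,S\in\{A,B,C\}$ are determined by the parity of $m-1$. Applying Proposition~\ref{p:ladder-extension}(a) to the bottom recollement (rows $m-1,m,m+1$), the $m$-th extension exists iff these two Hom functors each admit a right adjoint. A standard consequence of Keller's isomorphism (as invoked in the proof of Lemma~\ref{l:existence-of-left-adjoint}) is that $\RHom_R(M,?)$ has a right adjoint precisely when $M_R$ is compact; applied to $X_m$ and $Y_m$, this recovers exactly the compactness conditions at $n=m$ in (ii) and (iii). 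When these hold, the same isomorphism realises the row-$(m{+}1)$ Hom functors as tensor functors with $X_{m+1}=X_m^{\tr_R}$ and $Y_{m+1}=Y_m^{\tr_S}$, and the right adjoints creating the new row $m+2$ are $\RHom(X_{m+1},?)$ and $\RHom(Y_{m+1},?)$ with the algebras in each $\RHom$ dictated by the parity of $m+1$, as claimed.

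Part~(b) follows by the mirror induction, extending upward at each step. Proposition~\ref{p:ladder-extension}(b) replaces (a), and the implication (i)$\Rightarrow$(ii) of Lemma~\ref{l:existence-of-left-adjoint} supplies left adjoints of tensor functors under left-sided compactness of the bimodule; the negative-index transposes $(-)^{\tr_{A^{op}}}$ and $(-)^{\tr_{B^{op}}}$ are configured so that each such left adjoint reappears as a tensor with the preceding bimodule. The \emph{main obstacle} is combinatorial rather than conceptual: at every inductive pass one must track the left/right bimodule structures, the direction of each functor, and the algebras appearing in $\RHom$ and $\lten$, verifying that they match the parity-dependent formula. A single parity slip would break the identification, but the recursive definition of $X_n,Y_n$ is engineered so that Keller's isomorphism slots into the correct labelling at each step.
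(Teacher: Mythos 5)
Your proposal is correct and takes essentially the same route as the paper, whose entire proof reads ``This follows by induction on $m$ from Proposition~\ref{p:ladder-extension}, Lemma~\ref{l:finite-sides} and Lemma~\ref{l:existence-of-left-adjoint}. We leave the details to the reader.'' Your write-up simply supplies the details the authors omit, with the one-step extension criterion, the Keller-type identification $\RHom_R(M,?)\cong{?}\lten_S M^{\tr_R}$ for $M_R$ compact, and the parity bookkeeping all deployed exactly as intended.
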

\begin{proof}
This follows by induction on $m$ from Proposition~\ref{p:ladder-extension}, Lemma~\ref{l:finite-sides} and Lemma~\ref{l:existence-of-left-adjoint}. We leave the details to the reader.
\end{proof}

Next we give some examples of ladders and TTF tuples.

\begin{ex} {\rm Let $k$ be a field, $B$ and $C$ be finite-imensional $k$-algebras and $M$ a finitely generated $C$-$B$-bimodule. Consider the matrix algebra $A=\left(\begin{array}{cc} B & 0\\ {}_CM_B & C\end{array}\right)$. Put $e_1=\left(\begin{array}{cc} 1 & 0\\ 0 & 0\end{array}\right)$ and $e_2=\left(\begin{array}{cc} 0 & 0\\ 0 & 1\end{array}\right)$. Using Lemma~\ref{l:criterion-stratifying-ideal}, one checks that both $Ae_1A$ and $Ae_2A$ are stratifying ideals of $A$. Moreover, they produce a ladder of height $2$
\[
\xymatrix@C=5pc{\cd(\Mod B)
\ar@<+1ex>[r]|{?\ten_B^{\mathbf L} e_1A} \ar@<-3ex>[r] &\cd(\Mod A) \ar@<1ex>[l]|{?\ten_A Ae_1}
\ar@<-3ex>[l]
 \ar@<+1ex>[r] \ar@<-3ex>[r] & \cd(\Mod
C).\ar@<1ex>[l] \ar@<-3ex>[l]|{?\ten_C^{\mathbf L} e_2A}}
\]
By Proposition~\ref{p:ladder-extension}(b) and Lemma~\ref{l:existence-of-left-adjoint}, this ladder can be extended one step upwards if and only if ${}_C(e_2A)\in K^b(C\mbox{-}\proj)$ if and only if ${}_CM$ has finite projective dimension over $C$. By Proposition~\ref{p:ladder-extension}(a) and Lemma~\ref{l:restriction-of-functors-to-compact-obj}, it can be extended one step downwards if and only if $(Ae_1)_B\in K^b(\proj B)$ if and only if $M_B$ has finite projective dimension over $B$.
}
\end{ex}

\begin{ex} {\rm Let $k$ be a field and let $A$ be the path algebra of the quiver $\xymatrix{1\ar[r] & 2}$. Up to isomorphism and shift there are three indecomposable exceptional objects in $K^b(\proj A)$, given by the following representations
\[P_1=(\xymatrix{k&0\ar[l]}),~~ P_2=(\xymatrix{k&k\ar[l]_{id}}),~~ S_2= (\xymatrix{0&k\ar[l]}).\]
They generate an unbounded TTF tuple which is periodic of period $3$:
\[(\ldots,\Tria(P_2),\Tria(P_1),\Tria(S_2),\Tria(P_2),\Tria(P_1),\ldots).\]
Note that the TTF triple $(\Tria(P_2),\Tria(P_1),\Tria(S_2))$ corresponds to the recollement generated by $P_2=e_2A$, while the TTF triple $(\Tria(P_1),\Tria(S_2),\Tria(P_2))$ corresponds to the recollement generated by $P_1=e_1A$, where $e_1$ and $e_2$ are trivial paths at $1$ and $2$, respectively.}
\end{ex}

\begin{ex} \label{ex:s(2,2)}
{\rm Let $k$ be a field and let $A$ be the algebra given by the quiver with relations
$$\xymatrix{1\ar@<.7ex>[r]^{\alpha}&2\ar@<.7ex>[l]^{\beta}},~~\alpha\beta.$$
This is a quasi-hereditary algebra (with respect to the ordering $1<2$); its global
dimension is two.

An object of $K^b(\proj A)$ is indecomposable if and
only it is isomorphic to one of the following complexes
\begin{itemize}
\item[$\cdot$] $\xymatrix{P_1\ar[r]^{\beta\alpha} & P_1\ar@{.}[r] &
P_1\ar[r]^{\beta\alpha}&P_1}$,
\item[$\cdot$] $\xymatrix{P_2\ar[r]^{\beta}& P_1\ar[r]^{\beta\alpha} &P_1\ar@{.}[r] &
P_1\ar[r]^{\beta\alpha}&P_1}$,
\item[$\cdot$] $\xymatrix{P_1\ar[r]^{\beta\alpha} &P_1\ar@{.}[r] &
P_1\ar[r]^{\beta\alpha}&P_1\ar[r]^\alpha & P_2}$,
\item[$\cdot$] $\xymatrix{P_2\ar[r]^{\beta}& P_1\ar[r]^{\beta\alpha} &P_1\ar@{.}[r] &
P_1\ar[r]^{\beta\alpha}&P_1 \ar[r]^{\alpha} & P_2}$,
\end{itemize}
see for example~\cite{BobinskiGeissSkowronski04}.
The objects in the second and third families are exceptional with endomorphism algebra $k$, the projective $P_1$ is exceptional with endomorphism algebra $k[x]/x^2$, while
the other objects in the first and fourth families are not exceptional.

The projective
module $P_2$ generates a $\cd(\Mod)$-recollement (this is the one
from the quasi-hereditary structure of $A$), which sits in the
infinite ladder corresponding to the TTF tuple
$(\cc_n)_{n\in\mathbb{Z}}$, where
\[\cc_n=\begin{cases}\Tria(\nu^{\frac{n}{2}}P_2) & \text{ if $n$ is even},\\
\Tria(\nu^{\frac{n-1}{2}}S_1) & \text{ if $n$ is odd}.\end{cases}\]
Here $\nu$ is the derived Nakayama functor for $A$.
One directly checks that
up to shift all exceptional indecomposable objects of $K^b(\proj A)$ other than $P_1$
already occur in the tuple
\[(\ldots,\nu^{-1}P_2,\nu^{-1}S_1,P_2,S_1,\nu P_2,\nu S_1,\ldots).\]

To construct this ladder explicitly, one may start with the recollement given by the quasi-hereditary structure and first compute the outer terms: The algebra on the left hand side is $B:=A/Ae_2A \simeq k$. The algebra on the right hand side is $C:=e_2Ae_2 \simeq k$. Next, the images of $B$ and $C$, respectively, can be determined:
 $i_{\ast}(B) \simeq B \lten_B B_A \simeq S_1$ is simple, $j_!(C) \simeq
 e_2Ae_2 \lten_{e_2Ae_2} e_2A \simeq e_2 A$ is projective and
$j_{\ast}(C) \simeq \RHom_{e_2Ae_2}(Ae_2,e_2Ae_2) \simeq \Hom_k(Ae_2,k)$
is the injective $A$-module $I_2$. Observing that $I_2 \simeq \nu P_2$ shows that
the exceptional objects $P_2, S_1, \nu P_2$ correspond to three subsequent
recollements in the ladder, which therefore is determined by iteratively applying $\nu$ and its inverse to these objects.

Observe that the endomorphism algebra of $P_1$ has infinite global dimension, while $A$ has finite global dimension. It follows from Proposition~\ref{p:finiteness-of-global-dimension} that $P_1$ cannot generate a recollement of derived module categories. Therefore up to equivalence the above ladder is the unique
non-trivial ladder of derived categories for $\cd(\Mod A)$. Note
that the above tuple of objects, read from the right to the left, is
a helix in the sense of the Rudakov school \cite{Rudakov90}.}
\end{ex}

In Example~\ref{ex:s(2,2)} the ladder has a repeating pattern given by the Nakayama functor. This is a general phenomenon, compare \cite{Jorgensen10,Zhangpu11}.

\begin{proposition} Keep the notation and assumptions as in Proposition~\ref{p:unbounded-ladder}. Assume further that $A$ has finite global dimension and let $\nu:\cd(\Mod A)\rightarrow\cd(\Mod A)$ be the derived Nakayama functor for $A$. Set $T=j_!(C)$ and $T'=i_*(B)$. Then (\ref{eq:recollement-unbounded-der}) sits in an unbounded ladder which corresponds to the TTF tuple $(\cc_n)_{n\in\mathbb{Z}}$, where
\[
\cc_n=\begin{cases}\Tria(\nu^{\frac{n}{2}}T) & \text{ if $n$ is even},\\
\Tria(\nu^{\frac{n-1}{2}}T') & \text{ if $n$ is odd}.\end{cases}
\]
\end{proposition}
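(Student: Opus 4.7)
The plan is to use the fact that when $\gldim A<\infty$, the Nakayama functor $\nu$ is a triangle auto-equivalence of $\cd(\Mod A)$, and to combine this with the Auslander--Reiten formula to identify $\cc_2=\Img j_*$ with $\Tria(\nu T)$; once this base step is in place, the whole TTF tuple unfolds by iterating $\nu$.

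By Proposition~\ref{p:finiteness-of-global-dimension}, $B$ and $C$ also have finite global dimension, so $\nu$, $\nu_B$ and $\nu_C$ are triangle auto-equivalences of the corresponding unbounded derived module categories, each restricting to an auto-equivalence of $K^b(\proj)$. Moreover $\cd^b(\mod A)=K^b(\proj A)$, so $T=j_!(C)$ and $T'=i_*(B)$, which lie in $\cd^b(\mod A)$ by Lemma~\ref{l:finite-sides}(b), are compact in $\cd(\Mod A)$; consequently so is every $\nu^k T$ and every $\nu^k T'$.

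The main step is to show that $(\cc_0,\cc_1,\cc_2)=(\Tria T,\Tria T',\Tria\nu T)$ is the TTF triple associated to the given recollement. The first two equalities are immediate from Lemma~\ref{l:useful-info}(c). For the third, the key identity is $\nu\circ j_!\cong j_*\circ\nu_C$ on compact objects, which I would establish by a Yoneda argument: for $N\in\cd(\Mod A)$ and $M\in K^b(\proj C)$, chaining the Auslander--Reiten formula for the compact objects $j_!M$ and $M$ with the adjunctions $j_!\dashv j^*\dashv j_*$ gives natural isomorphisms
\begin{align*}
\Hom_A(N,\nu j_!M) &\cong D\Hom_A(j_!M,N) \cong D\Hom_C(M,j^*N)\\
&\cong \Hom_C(j^*N,\nu_C M) \cong \Hom_A(N,j_*\nu_C M).
\end{align*}
Specialising to $M=C$ yields $\nu T\cong j_*(DC)$. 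Since $DC$ is a compact generator of $\cd(\Mod C)$ (the auto-equivalence $\nu_C$ sends the generator $C$ to $DC$) and $j_*\colon\cd(\Mod C)\to\Img j_*$ is a triangle equivalence, $\nu T$ generates $\Img j_*$ in the sense inherited from $\cd(\Mod C)$. Compactness of $T'$ ensures that $\Img j_*=T'^{\perp}$ is closed under coproducts in $\cd(\Mod A)$, so this generation upgrades to the equality $\Tria(\nu T)=\Img j_*$ inside $\cd(\Mod A)$.

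From this base case the rest is formal. Applying the auto-equivalence $\nu^k$ sends TTF triples to TTF triples and $\Tria(X)$ to $\Tria(\nu^k X)$, so $(\cc_{2k},\cc_{2k+1},\cc_{2k+2})=\nu^k(\cc_0,\cc_1,\cc_2)$ is a TTF triple for each $k\in\Z$, matching the stated formula. Two adjacent such triples provide the $t$-structures $(\cc_{2k-1},\cc_{2k})$ and $(\cc_{2k},\cc_{2k+1})$, whose superposition is the TTF triple $(\cc_{2k-1},\cc_{2k},\cc_{2k+1})$; hence every three consecutive terms of $(\cc_n)_{n\in\Z}$ form a TTF triple. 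The main obstacle, as I see it, is the base identification $\Tria(\nu T)=\Img j_*$: both the Nakayama compatibility $\nu j_!\cong j_*\nu_C$ and the coproduct closure of $\Img j_*$ (needed to upgrade generation inside $\cd(\Mod C)$ to generation inside the ambient category) rest on the compactness of $T$ and $T'$, which is precisely where the finiteness of global dimension enters essentially.
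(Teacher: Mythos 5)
Your proof is correct, but it takes a genuinely different route from the paper's. The paper feeds the hypothesis $\gldim A<\infty$ into Proposition~\ref{p:unbounded-ladder}: it checks by induction that all the bimodule complexes $X_n$, $Y_n$ are compact on the appropriate sides, obtains from that proposition an unbounded ladder with TTF tuple given by $\Tria((X_n)_A)$ for $n$ even and $\Tria((Y_n)_A)$ for $n$ odd, and then identifies $\Tria((X_2)_A)$ with $\Tria(\nu T)$ by comparing $X_2=\RHom_C(X^{\tr_A},C)$ with $\nu(T)\cong\RHom_C(X^{\tr_A},DC)$, using that $C$ and $DC$ generate each other in finitely many steps because $\gldim C<\infty$. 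You bypass the bimodule machinery entirely: you establish the single identification $\Img j_*=\Tria(\nu T)$ via the Serre-duality computation $\nu\circ j_!\cong j_*\circ\nu_C$ on compact objects (whence $\nu T\cong j_*(DC)$), and then propagate the base TTF triple $(\Tria T,\Tria T',\Tria \nu T)$ by powers of the coproduct-preserving auto-equivalence $\nu$, splicing adjacent triples into a tuple. Your route is more conceptual and makes the role of the Nakayama functor transparent -- it is essentially the reflection mechanism of J{\o}rgensen's ``reflecting recollements'', which the paper cites; the paper's route has the advantage of reusing Proposition~\ref{p:unbounded-ladder} and of exhibiting the functors in every row of the ladder explicitly as derived tensor and Hom functors attached to the $X_n$ and $Y_n$, which your argument does not supply (though the proposition as stated only asserts the TTF tuple, so nothing is missing). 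Both arguments ultimately rest on the same inputs: compactness of $T'$, guaranteed by the finiteness of the global dimension, and the fact that $C$ and $DC$ generate the same localizing subcategories.
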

\begin{proof}
By Proposition~\ref{fingldim}, both $B$ and $C$ have finite global dimension. Then it follows by induction on $n$ that for all $n\in\mathbb{Z}$ the complex $X_n$ is compact in $\cd(\Mod A)$, $\cd(\Mod C)$, $\cd(C\mbox{-}\Mod)$ or $\cd(A\mbox{-}\Mod)$, depending on the sign and parity of $n$. Thus by Proposition~\ref{p:unbounded-ladder}, the given recollement (\ref{eq:recollement-unbounded-der}) sits in an unbounded ladder which corresponds to the TTF tuple $(\cc_n)_{n\in\mathbb{Z}}$, where
\[
\cc_n=\begin{cases}\Tria((X_n)_A) & \text{ if $n$ is even},\\
\Tria((Y_n)_A) & \text{ if $n$ is odd}.\end{cases}
\]
It remains to show $\Tria((X_n)_A)=\Tria(\nu^{\frac{n}{2}}T)$ if $n$ is even and $\Tria((Y_n)_A)=\Tria(\nu^{\frac{n-1}{2}}T')$ if $n$ is odd. Up to shift of the TTF tuple, it suffices to show $\Tria((X_2)_A)=\Tria(\nu T)$. By definition, $X_2=(X^{\tr_A})^{\tr_C}=\RHom_C(\RHom_A(X,A),C)$. Since $C$ has finite global dimension, the projective generator $C$ and injective cogenerator $DC$ generate each other in $\cd(\Mod C)$ in finitely many steps. Therefore, $(X_2)_A$ and $\nu(T)\cong\nu(X)=D\RHom_A(X,A)\cong\RHom_C(\RHom_A(X,A),DC)$ generate each other in $\cd(\Mod A)$ in finitely many steps. In particular, $\Tria((X_2)_A)=\Tria(\nu(T))$, as desired.
\end{proof}

\section{Lifting and restricting recollements}\label{s:lifting-and-restricting-recollements}

Recollements of derived module categories can be defined on all
levels --- bounded or unbounded derived categories and finitely
generated or general module categories. While lifting of
recollements from bounded to left or right bounded level and from
left or right bounded to unbounded level is not problematic (see
\cite{Koenig91} and \cite[Section 4]{AKL2}), going in the opposite
direction is not always possible. In general, recollements on
unbounded level need not restrict to recollements on bounded or left
or right bounded level. In the first and second subsections,
characterisations are given, when lifting and restricting is
possible. In the third subsection, an example is provided to
illustrate the conditions in these characterisations.

\smallskip

\subsection{Lifting recollements to $\cd(\Mod)$}

Let $A$ be a $k$-algebra. Let $\cd_A$ be one of the following
derived categories associated to $A$: $\cd^-(\Mod A)$, $\cd^b(\Mod
A)$, $\cd^b(\mod A)$ (for $k$ being a field and $A$ being finite-dimensional over
$k$) and $K^b(\proj A)$.

\begin{proposition}\label{p:lifting-recollement} Let $A$, $B$ and $C$ be $k$-algebras.
\begin{itemize}
\item[(a)] \emph{(\cite[Lemma 4.1, 4.2, 4.3]{AKL2} and~\cite[Corollary
2.7]{AKL3})} Any recollement
\begin{eqnarray}\label{eq:recollement-on-subcat}\xymatrix@!=6pc{\cd_B
\ar[r]|{i'_*=i'_!} &\cd_A \ar@<+2.5ex>[l]|{i'^!}
\ar@<-2.5ex>[l]|{i'^*} \ar[r]|{j'^!=j'^*} &
\cd_C\ar@<+2.5ex>[l]|{j'_*} \ar@<-2.5ex>[l]|{j'_!} }\end{eqnarray}
can be lifted to a recollement
\begin{eqnarray}\label{eq:lifted-recollement-on-unbounded}\xymatrix@!=6pc{\cd(\Mod B) \ar[r]|{i_*=i_!} &\cd(\Mod A)
\ar@<+2.5ex>[l]|{i^!} \ar@<-2.5ex>[l]|{i^*} \ar[r]|{j^!=j^*} &
\cd(\Mod C)\ar@<+2.5ex>[l]|{j_*} \ar@<-2.5ex>[l]|{j_!}
}\end{eqnarray} such that $j_!(C)\cong j'_!(C)$,
$i_*(B)\cong i'_*(B)$ and $j_*(C)\cong j'_*(C)$.
\item[(b)] The lifted recollement (\ref{eq:lifted-recollement-on-unbounded}) in (a) restricts,
up to equivalence,  to the recollement
(\ref{eq:recollement-on-subcat}).
\end{itemize}
\end{proposition}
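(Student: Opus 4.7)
My plan for part (b) is to combine two ingredients: (i) the lifted recollement restricts to a recollement on $\cd_A$, and (ii) recollements on $\cd_A$ are determined up to equivalence by their generating compact exceptional objects, which by construction coincide between the two recollements. So the restricted lifted recollement must be equivalent to the original.

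The first task is therefore to verify that each of the six functors in (\ref{eq:lifted-recollement-on-unbounded}) restricts to $\cd_A$. By Lemma~\ref{l:useful-info}(e), the functors $i^*$ and $j_!$ always restrict to $K^b(\proj)$, hence to any of the subcategories in question, and $i_*,j^!$ always restrict to both $\cd^b(\Mod)$ and $\cd_{fl}$; for $\cd_A=\cd^-(\Mod)$ the restriction of $i_*,j^!$ is trivial since they preserve bounded cohomology. In the remaining case $\cd_A=K^b(\proj)$, the restrictions of $i_*$ and $j^!$ follow by d\'evissage from $i_*(B)\cong i'_*(B)\in K^b(\proj A)$ and $j^!(A)\in K^b(\proj C)$ (the latter because it is compact exceptional in the lifted recollement). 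For the remaining functors $j_*$ and $i^!$, the crucial input is the extra information supplied by part (a) that $j_*(C)\cong j'_*(C)$ lies in $\cd_A$: a d\'evissage argument inside $\mathrm{Im}\,j_*$ then shows that $j_*$ restricts to $\cd_A$, and applying the canonical triangle $i_!i^!\to\mathrm{id}\to j_*j^*\to$ to any $M\in\cd_A$ together with the restriction of $j_*j^*$ yields that $i^!$ restricts to $\cd_A$ as well.

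Once all six functors restrict, the adjunction isomorphisms, the orthogonality relation $i^!\circ j_*=0$, the full-faithfulness statements, and the canonical triangles of (\ref{eq:lifted-recollement-on-unbounded}) descend verbatim to $\cd_A$, producing a recollement of $\cd_A$ by $\cd_B$ and $\cd_C$: the \emph{restricted lifted recollement}. By construction, the associated exceptional objects $j_!(C)$, $i_*(B)$, and $j_*(C)$ in this restricted recollement agree, respectively, with $j'_!(C)$, $i'_*(B)$, $j'_*(C)$ from the original recollement (\ref{eq:recollement-on-subcat}).

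Finally, I would invoke the uniqueness principle recalled in Subsection~\ref{reminder} (and proved in \cite[Theorem 2.2]{AKL2} and \cite[Corollary 2.7]{AKL3}): on each of the derived categories $\cd_A$ in question, a recollement is determined up to equivalence by the pair $(j_!(C),i_*(B))$ of compact exceptional objects (and, at $\cd(\Mod)$-level, already by $j_!(C)$ alone). Since both the restricted lifted recollement and the original recollement (\ref{eq:recollement-on-subcat}) share this generating data, they are equivalent, which is the desired conclusion. I expect the main technical obstacle to be the verification that $j_*$ and $i^!$ restrict to $\cd_A$, since these functors are the most sensitive to the choice of boundedness condition; however, the hypothesis $j_*(C)\cong j'_*(C)\in\cd_A$ built into the statement of (a) is precisely the input needed to make this step go through.
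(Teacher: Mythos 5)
Your strategy for (b) is genuinely different from the paper's: you attempt to restrict the lifted recollement functor by functor and then appeal to a uniqueness principle for recollements with prescribed generating objects, whereas the paper works entirely with the associated TTF triples, showing directly that the intersection of $(\cx,\cy,\cz)$ with $\cd_A$ coincides with the TTF triple of the given recollement and then invoking Proposition~\ref{p:recollement-corresponds-to-TTF}. Unfortunately your route has genuine gaps. The most serious is the claimed d\'evissage for $j_*$: since $j_*$ is a right adjoint it need not commute with coproducts, and for $\cd_A=\cd^-(\Mod)$, $\cd^b(\Mod)$ or $\cd^b(\mod)$ a general object $M$ of $\cd_C$ does not lie in $\tria(C)$, so knowing $j_*(C)\cong j'_*(C)\in\cd_A$ does not propagate to $j_*(M)\in\cd_A$. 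The same problem affects your deduction for $i^!$, which in any case only yields $i_*i^!(M)\in\cd_A$ and would still need the identification $\Img i_*\cap\cd_A=\Img i'_*$ (part of what is being proved) to conclude $i^!(M)\in\cd_B$. Your treatment of $\cd^-(\Mod)$ also conflates boundedness conditions: Lemma~\ref{l:useful-info}(e) gives that $i_*$ and $j^!$ restrict to $\cd^b(\Mod)$, which says nothing about $\cd^-(\Mod)$; by Proposition~\ref{p:recollement-restricting-to-d-Mod} the relevant condition there is $i_*(B)\in K^b(\Proj A)$, which does not follow merely from $i_*(B)\cong i'_*(B)\in\cd^-(\Mod A)$. (A smaller slip: $j^!(A)$ is not one of the compact exceptional objects of the lifted recollement; its compactness is instead equivalent to that of $i_*(B)$ by Lemma~\ref{l:restriction-to-compact-for-row}(a).)

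Even granting that all six functors restrict, your concluding step relies on the assertion that a recollement of $\cd_A$ is determined up to equivalence by the pair $(j_!(C),i_*(B))$. This is available in the cited literature for $\cd(\Mod)$ and, for finite-dimensional algebras, for $\cd^b(\mod)$, but not for $K^b(\proj)$, $\cd^b(\Mod)$ or $\cd^-(\Mod)$; establishing it there amounts to showing that the three terms of the associated TTF triple are recovered from these two objects, which is exactly the comparison $\cx_1=\cx\cap\cd_A$, $\cy_1=\cy\cap\cd_A$, $\cz_1=\cz\cap\cd_A$ constituting the paper's actual proof. That argument is short and uniform in $\cd_A$: the inclusions $\cx_1\subseteq\cx$, $\cy_1\subseteq\cy$, $\cz_1\subseteq\cz$ follow from orthogonality against the shared generators, and the reverse inclusions follow because $(\cx_1,\cy_1)$ and $(\cy_1,\cz_1)$ are already $t$-structures on $\cd_A$. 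I recommend replacing the functor-by-functor analysis with this TTF-triple comparison.
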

\begin{proof}
(b) We will show that the TTF triple
$(\cx,\cy,\cz)$ associated with the lifted recollement
(\ref{eq:lifted-recollement-on-unbounded}) restricts to the TTF
triple $(\cx_1,\cy_1,\cz_1)$ associated with the given recollement
(\ref{eq:recollement-on-subcat}). The desired result then follows
from Proposition~\ref{p:recollement-corresponds-to-TTF}. Recall that
$\cx=\Tria(j_!(C))$, $\cy=\Tria(i_*(B))=(\tria(j_!(C)))^\perp$ and
$\cz=(\tria(i_*(B)))^\perp$. It is clear that $\cx_1\subseteq \cx$.
Moreover, $j_!(C)\in\cx_1$ being left orthogonal to $\cy_1$ implies
$\cy_1\subseteq \cy$. Similarly, $\cz_1\subseteq\cz$. Consequently,
$\cx_1\subseteq\cx\cap\cd_A$, $\cy_1\subseteq \cy\cap\cd_A$ and
$\cz_1\subseteq\cz\cap\cd_A$. In particular, $\cx_1\perp
(\cy\cap\cd_A)$, which implies that $\cy\cap\cd_A\subseteq \cy_1$
because $(\cx_1,\cy_1)$ is a $t$-structure of $\cd_A$. Therefore the
equality $\cy_1=\cy\cap\cd_A$ holds. Similarly
$\cx_1=\cx\cap\cd_A$ and $\cz_1=\cz\cap\cd_A$.
\end{proof}

\begin{rem} It is not clear if the restriction of the
induced recollement (\ref{eq:lifted-recollement-on-unbounded}) coincides
with (instead of being equivalent to)
 the given
recollement (\ref{eq:recollement-on-subcat}). This would positively answer
Rickard's question asking whether any derived equivalence is a
derived tensor functor given by a bimodule complex.
\end{rem}

\subsection{Restricting recollements from $\cd(\Mod)$}
 Suppose we are
given three $k$-algebras $A$, $B$ and $C$ together with a
recollement of the form
\begin{align*}\label{eq:recollement-on-unbounded-in-section-restricting}\xymatrix@!=6pc{\cd(\Mod B) \ar[r]|{i_*=i_!}
&\cd(\Mod A) \ar@<+2.5ex>[l]|{i^!} \ar@<-2.5ex>[l]|{i^*}
\ar[r]|{j^!=j^*} & \cd(\Mod C)\ar@<+2.5ex>[l]|{j_*}
\ar@<-2.5ex>[l]|{j_!} }\tag{$R$}\end{align*} We are interested in
conditions under which this recollement can be restricted to a
recollement on $K^b(\proj)$, $\cd^b(\mod)$ (when $k$ is a field and
the algebras are finite-dimensional $k$-algebras), $\cd^b(\Mod)$ and
$\cd^-(\Mod)$.

\subsubsection{\sc Restricting recollements to $K^b(\proj)$}

We start with an auxiliary result.
\begin{lemma}\label{l:non-compact-to-non-compact}
Let $F:\cc\rightarrow\cc'$ be a fully faithful triangle functor
commuting with direct sums. If $X\in\cc$ is not compact, then $F(X)$
is not compact.
\end{lemma}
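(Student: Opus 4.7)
The plan is to argue directly by transporting a witness of non-compactness through $F$. Suppose $X$ is not compact in $\cc$. Then by definition there exists a set-indexed family $\{Y_i\}_{i\in I}$ of objects of $\cc$ for which the canonical comparison morphism
\[
\varphi_X \colon \bigoplus_{i\in I}\Hom_\cc(X,Y_i)\;\longrightarrow\;\Hom_\cc\Bigl(X,\bigoplus_{i\in I}Y_i\Bigr)
\]
fails to be bijective. I would then take the transferred family $\{F(Y_i)\}_{i\in I}$ in $\cc'$ and show that the analogous comparison morphism $\varphi_{F(X)}$ for $F(X)$ also fails to be bijective, which gives non-compactness of $F(X)$.

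The two hypotheses on $F$ feed directly into this. Because $F$ commutes with direct sums, the canonical map $\bigoplus_i F(Y_i)\to F\bigl(\bigoplus_i Y_i\bigr)$ is an isomorphism in $\cc'$. Because $F$ is fully faithful, it induces bijections $\Hom_\cc(X,Y_i)\cong \Hom_{\cc'}(F(X),F(Y_i))$ for each $i$ as well as $\Hom_\cc(X,\bigoplus_i Y_i)\cong \Hom_{\cc'}(F(X),F(\bigoplus_i Y_i))$; composing the latter with the isomorphism above yields $\Hom_\cc(X,\bigoplus_i Y_i)\cong \Hom_{\cc'}(F(X),\bigoplus_i F(Y_i))$.

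The only thing requiring care is naturality: under these identifications, $\varphi_X$ corresponds precisely to the comparison morphism $\varphi_{F(X)}$ associated with the family $\{F(Y_i)\}$. This follows by a diagram chase from the fact that both comparison maps are assembled from the structural morphisms into their respective coproducts, and $F$ sends the structural maps $Y_i\to \bigoplus_j Y_j$ to the structural maps $F(Y_i)\to \bigoplus_j F(Y_j)$ under the above isomorphism. Consequently $\varphi_X$ is bijective if and only if $\varphi_{F(X)}$ is, so the non-compactness of $X$ forces that of $F(X)$. There is no real obstacle beyond this formal compatibility check; everything else is immediate from the hypotheses.
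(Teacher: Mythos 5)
Your argument is correct and is essentially the paper's proof: both transport the compactness comparison morphism through $F$ via the same commutative diagram built from full faithfulness and the isomorphism $\bigoplus_i F(Y_i)\cong F(\bigoplus_i Y_i)$, the paper simply phrasing it contrapositively (if $F(X)$ is compact then $X$ is). No substantive difference.
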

\begin{proof} Let $\{Y_i\mid i\in I\}$ be a set of objects of $\cc$. There is a commutative diagram
\[\xymatrix{\Hom_\cc(X,\bigoplus_{i\in I}Y_i)\ar[r]^(0.43)\simeq & \Hom_{\cc'}(F(X),F(\bigoplus_{i\in I}Y_i))\\
&\Hom_{\cc'}(F(X),\bigoplus_{i\in I}F(Y_i))\ar[u]^\simeq\\
\bigoplus_{i\in I}\Hom_\cc(X,Y_i)\ar[uu]\ar[r]^(0.43){\simeq} & \bigoplus_{i\in I}\Hom_{\cc'}(F(X),F(Y_i)).\ar[u]}\]
If $F(X)$ is compact in $\cc'$, then the right lower map is bijective, implying that the left map is bijective, i.e. $X$ is compact in $\cc$.
\end{proof}

Next we show that $i_*$ restricts to $K^b(\proj)$ if and only if so
does $j^*$. Moreover, if this happens, then $i^!$ restricts to
$K^b(\proj)$ if and only if so does $j_*$.

\begin{lemma}\label{l:restriction-to-compact-for-row}
 \begin{itemize}
  \item[(a)] $i_*(B)\in K^b(\proj A)$ if and only if $j^*(A)\in K^b(\proj C)$, which happens if and only if $j_!j^*(A)\in K^b(\proj A)$.
  \item[(b)] Assume that $i_*(B)\in K^b(\proj A)$. Then $i^!(A)\in K^b(\proj B)$ if and only if $j_*(C)\in K^b(\proj A)$.
 \end{itemize}
\end{lemma}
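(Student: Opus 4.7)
The plan is to establish both parts by threading the canonical triangles of the recollement through carefully chosen thick subcategories, using that $K^b(\proj A)=\thick(A)$ is thick in $\cd(\Mod A)$.

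For part (a) I would exploit the canonical triangle $j_!j^*(A)\to A\to i_*i^*(A)\to j_!j^*(A)[1]$. Since $A\in K^b(\proj A)$ and $K^b(\proj A)$ is thick, the two outer terms of this triangle lie in $K^b(\proj A)$ simultaneously, giving $i_*i^*(A)\in K^b(\proj A)$ if and only if $j_!j^*(A)\in K^b(\proj A)$. Next, by Lemma~\ref{l:useful-info}(c), $\tria(i_*(B))=\tria(i_*i^*(A))$, so both objects generate the same thick subcategory; since $K^b(\proj A)$ is itself thick, one lies in it if and only if the other does. Finally, $j_!$ is fully faithful and commutes with direct sums (as a left adjoint), and by Lemma~\ref{l:useful-info}(e) it restricts to $K^b(\proj)$: hence $j^*(A)\in K^b(\proj C)$ implies $j_!j^*(A)\in K^b(\proj A)$, while the converse is Lemma~\ref{l:non-compact-to-non-compact} applied to $j_!$. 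Chaining the three equivalences proves (a).

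For part (b) I would run a parallel argument with the second canonical triangle $i_*i^!(A)\to A\to j_*j^*(A)\to i_*i^!(A)[1]$, plus one extra step. The triangle together with thickness of $K^b(\proj A)$ yields $i_*i^!(A)\in K^b(\proj A)$ if and only if $j_*j^*(A)\in K^b(\proj A)$. Under the hypothesis $i_*(B)\in K^b(\proj A)$, I then relate this to $i^!(A)$: if $i^!(A)\in K^b(\proj B)=\thick(B)$, then $i_*i^!(A)\in\thick(i_*(B))\subseteq K^b(\proj A)$; conversely, $i_*$ is fully faithful and commutes with direct sums (being a left adjoint to $i^!$), so Lemma~\ref{l:non-compact-to-non-compact} supplies the reverse implication. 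It remains to connect $j_*j^*(A)\in K^b(\proj A)$ with $j_*(C)\in K^b(\proj A)$.

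This last step is the main obstacle. The easy direction is direct: by part (a) we have $j^*(A)\in K^b(\proj C)=\thick(C)$, whence $j_*j^*(A)\in\thick(j_*(C))$, so $j_*(C)\in K^b(\proj A)$ forces $j_*j^*(A)\in K^b(\proj A)$. For the reverse direction the key observation is that $C=j^*j_!(C)=j^*(T)$, where $T=j_!(C)\in K^b(\proj A)$ by Lemma~\ref{l:useful-info}(e). Consider the class
\[
\Ucal=\{Q\in\cd(\Mod A):j_*j^*(Q)\in K^b(\proj A)\};
\]
it is a thick subcategory of $\cd(\Mod A)$ because $j_*j^*$ is a triangle functor and $K^b(\proj A)$ is thick. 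The hypothesis $j_*j^*(A)\in K^b(\proj A)$ means $A\in\Ucal$, hence $\thick(A)=K^b(\proj A)\subseteq\Ucal$; in particular $T\in\Ucal$, so $j_*(C)=j_*j^*(T)\in K^b(\proj A)$. Combining the three equivalences yields (b).
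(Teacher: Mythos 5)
Your argument is correct. Part (a) is essentially the paper's own proof: both run the canonical triangle $j_!j^*(A)\to A\to i_*i^*(A)\to j_!j^*(A)[1]$ through the two-out-of-three property of the thick subcategory $K^b(\proj A)$, use Lemma~\ref{l:useful-info}(c) to pass between $i_*i^*(A)$ and $i_*(B)$, and use Lemma~\ref{l:useful-info}(e) together with Lemma~\ref{l:non-compact-to-non-compact} to pass between $j_!j^*(A)$ and $j^*(A)$. Part (b) is where you genuinely diverge. The paper proves the two implications separately with two different canonical triangles: for the implication from $i^!(A)\in K^b(\proj B)$ it uses the canonical triangle of $j_!(C)$, namely $i_*i^!j_!(C)\to j_!(C)\to j_*(C)\to i_*i^!j_!(C)[1]$, together with the fact that $i_*$ and $i^!$ restrict to compacts under the hypotheses; for the converse it applies $i^*$ to the triangle $i_*i^!(A)\to A\to j_*j^*(A)\to i_*i^!(A)[1]$ and reads off $i^!(A)\cong i^*i_*i^!(A)$ from the resulting triangle in $K^b(\proj B)$. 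You instead stay with that single triangle for $A$, convert $i^!(A)\in K^b(\proj B)$ into $i_*i^!(A)\in K^b(\proj A)$ via thickness of $\thick(i_*(B))$ in one direction and Lemma~\ref{l:non-compact-to-non-compact} in the other, and then close the remaining gap between $j_*j^*(A)$ and $j_*(C)$ by the d\'evissage observation that the preimage of $K^b(\proj A)$ under $j_*j^*$ is thick and contains $A$, hence contains $T=j_!(C)$, with $j_*j^*(T)\cong j_*(C)$. Both routes are d\'evissage arguments of comparable length; yours isolates the reusable principle that $j_*j^*(A)$ compact forces $j_*j^*(P)$ compact for every compact $P$, and avoids introducing the second canonical triangle, while the paper's version makes the role of the restrictions of $i^*$, $i_*$ and $i^!$ to compacts more explicit. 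Both are valid.
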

\begin{proof} (a)
Consider the canonical triangle
\[\xymatrix{j_!j^*(A)\ar[r] & A\ar[r] & i_*i^*(A)\ar[r] &j_!j^*(A)[1]}.\]
If $i_*(B)\in K^b(\proj A)$, then $i_*i^*(A)\in K^b(\proj A)$ by
Lemma~\ref{l:useful-info} (c). Therefore $j_!j^*(A)\in K^b(\proj
A)$, and hence $j^*(A)\in K^b(\proj A)$ by
Lemma~\ref{l:non-compact-to-non-compact}. Conversely, if $j^*(A)\in K^b(\proj C)$, then $j_!j^*(A)\in K^b(\proj A)$ by Lemma~\ref{l:useful-info}(e). So the above triangle implies that $i_*i^*(A)\in K^b(\proj A)$. Now by Lemma~\ref{l:useful-info}(c), $i_*(B)\in K^b(\proj A)$.

(b) Assume that $i_*(B)\in K^b(\proj A)$ and $i^!(A)\in K^b(\proj
B)$ hold. Consider the following canonical triangle associated to
$j_!(C)$
\[\xymatrix@R=0.5pc{i_*i^!j_!(C)\ar[r] & j_!(C)\ar[r] & j_*j^*j_!(C)\ar[r]\ar@{=}[d] & i_*i^!j_!(C)[1].\\
&&j_*(C)&}\] It follows from the assumption that $i_*i^!j_!(C)\in
K^b(\proj A)$ since $j_!(C)\in K^b(\proj A)$. Thus $j_*(C)\in
K^b(\proj A)$.

Assume that $i_*(B)\in K^b(\proj A)$ and $j_*(C)\in K^b(\proj A)$
hold. Consider the following canonical triangle associated to $A$:
\[\xymatrix{i_*i^!(A)\ar[r] & A\ar[r] & j_*j^*(A)\ar[r] & i_*i^!(A)[1].}\]
Applying $i^*$ to this triangle, we obtain
\[\xymatrix@R=0.5pc{i^*i_*i^!(A)\ar[r]\ar@{=}[d] & i^*(A)\ar[r] & i^*j_*j^*(A)\ar[r] & i_*i^!(A)[1].\\
i^!(A)}\] We know from (a) and the assumption that $j^*(A)\in
K^b(\proj C)$. Since $j_*(C)\in K^b(\proj A)$ and $i^*$ restricts to
$K^b(\proj)$, it follows that $i^*j_*j^*(A)\in K^b(\proj B)$, and
hence $i^!(A)\in K^b(\proj B)$.
\end{proof}

\begin{thm}\label{t:recollement-restricting-to-kbproj} The following are equivalent:
\begin{itemize}
\item[(i)] the recollement (\ref{eq:recollement-on-unbounded-in-section-restricting}) restricts to a
recollement
$$\xymatrix@!=6pc{K^b(\proj B) \ar[r]|{i_*=i_!} &K^b(\proj A) \ar@<+2.5ex>[l]|{i^!}
\ar@<-2.5ex>[l]|{i^*} \ar[r]|{j^!=j^*} & K^b(\proj C),
\ar@<+2.5ex>[l]|{j_*} \ar@<-2.5ex>[l]|{j_!} }$$
\item[(ii)] $i_*(B)\in K^b(\proj A)$ and $i^!(A)\in
K^b(\proj B)$,
\item[(iii)] $j^*(A)\in
K^b(\proj C)$ and $j_*(C)\in K^b(\proj A)$.
\end{itemize}
\end{thm}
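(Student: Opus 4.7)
The plan is to prove the theorem in three steps: first establish (ii)$\Leftrightarrow$(iii) as an immediate corollary of the preceding Lemma~\ref{l:restriction-to-compact-for-row}; then note (i)$\Rightarrow$(ii) trivially; and finally carry out the substantive direction (ii)$\Rightarrow$(i) by checking that each of the six functors in the recollement restricts to $K^b(\proj)$, and that the recollement axioms transfer.

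For (ii)$\Leftrightarrow$(iii), part (a) of Lemma~\ref{l:restriction-to-compact-for-row} gives the equivalence $i_*(B)\in K^b(\proj A)\iff j^*(A)\in K^b(\proj C)$, and under this assumption part (b) gives $i^!(A)\in K^b(\proj B)\iff j_*(C)\in K^b(\proj A)$. Combining these yields the equivalence. For (i)$\Rightarrow$(ii), if the six functors restrict then $i_*(B)\in K^b(\proj A)$ (since $B\in K^b(\proj B)$) and $i^!(A)\in K^b(\proj B)$ (since $A\in K^b(\proj A)$).

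The main step is (ii)$\Rightarrow$(i). I would proceed as follows. By Lemma~\ref{l:useful-info}(e), the functors $i^*$ and $j_!$ already restrict to $K^b(\proj)$ with no assumption. Using Lemma~\ref{l:restriction-of-functors-to-compact-obj} (restriction of a triangle functor to $K^b(\proj)$ is detected by its value on the free module of rank one), together with the equivalences just proved, hypothesis (ii) gives that $i_*(B)\in K^b(\proj A)$, $i^!(A)\in K^b(\proj B)$, $j^*(A)\in K^b(\proj C)$, and $j_*(C)\in K^b(\proj A)$, so each of $i_*$, $i^!$, $j^*$, $j_*$ restricts to $K^b(\proj)$. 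Thus all six functors of the diagram (\ref{eq:recollement-on-unbounded-in-section-restricting}) restrict to a diagram of triangle functors on $K^b(\proj)$.

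It remains to verify that the restricted diagram is a recollement. Adjunctions, the vanishings $i^!\circ j_*=0$, $j^!\circ i_!=0$, $i^*\circ j_!=0$, the full embedding of $i_*$, $j_!$, $j_*$, and the two canonical triangles are inherited directly from the unbounded recollement since the subcategories $K^b(\proj B)\hookrightarrow \cd(\Mod B)$ and $K^b(\proj C)\hookrightarrow \cd(\Mod C)$ are fully faithful triangulated embeddings. The only conceivable difficulty would be whether some canonical triangle might leave $K^b(\proj A)$, but since both outer terms of each canonical triangle applied to a compact object lie in $K^b(\proj A)$ by what we just proved, the third term does too. This completes (ii)$\Rightarrow$(i).

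The only genuine content lies in Lemma~\ref{l:restriction-to-compact-for-row}, which has already been established; the theorem itself is essentially a synthesis, so the hardest part is organizing the bookkeeping cleanly rather than any new technical obstacle.
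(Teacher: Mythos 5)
Your proposal is correct and follows essentially the same route as the paper: both rely on Lemma~\ref{l:useful-info}(e) for $i^*$ and $j_!$, on Lemma~\ref{l:restriction-of-functors-to-compact-obj} to reduce restriction of the remaining functors to the four conditions on generators, and on Lemma~\ref{l:restriction-to-compact-for-row} to identify these four conditions with (ii) and with (iii). The paper's proof is merely more condensed, leaving implicit the verification (which you spell out correctly) that the restricted diagram inherits the recollement axioms.
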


\begin{proof} By  Lemma~\ref{l:useful-info}(e) both functors $j_!$ and $i^*$ can be restricted
to $K^b(\proj)$. Therefore a
  recollement of the form (\ref{eq:recollement-on-unbounded-in-section-restricting}) restricts to
$K^b(\proj)$ if and only if $i_*(B)\in K^b(\proj A)$, $i^!(A)\in K^b(\proj
B)$, $j^*(A)\in K^b(\proj C)$ and $j_*(C)\in K^b(\proj A)$. The
desired result then follows from
Lemma~\ref{l:restriction-to-compact-for-row}.
\end{proof}

Example~\ref{e:three-recollements} (c) will show that requiring only the condition
$i^!(A)\in K^b(\proj A)$ in (ii) (or $B$ has finite global
dimension, \confer  ~Corollary~\ref{cor:reco-restriction-C} below) is
not enough for the recollement
(\ref{eq:recollement-on-unbounded-in-section-restricting}) to
restrict to $K^b(\proj)$. The next example shows that requiring only the
condition $j_*(C)\in K^b(\proj A)$ in (iii) also is not sufficient.

\begin{ex}\label{ex:large-tilting}{\rm Let $k$ be a field, $T=k^{\oplus \mathbb{N}}$ and
$A=\End_k(T)$. Then elements in $A$ are identified with
$\mathbb{N}\times\mathbb{N}$-matrices such that there are only
finitely many non-zero entries in each column. Let $e$ be the
elementary matrix with entry $1$ in position $(1,1)$ and with entry
$0$ in any other position. Then $AeA$ is a stratifying ideal and
hence yields a recollement
$$\xymatrix@C=6pc{\cd(\Mod A/AeA) \ar[r]|{i_*=i_!} &\cd(\Mod A) \ar@<+2.5ex>[l]|{i^!}
\ar@<-2.5ex>[l]|{i^*} \ar[r]|{j^!=j^*} & \cd(\Mod
k).\ar@<+2.5ex>[l]|{j_*} \ar@<-2.5ex>[l]|{j_!} }$$
which is even a stratification. Indeed, $AeA$ consists of the endomorphisms of $T$ with finite-dimensional image, so $ A/AeA$ is a simple von Neumann regular ring  by \cite[4.27]{Lam} and \cite[the second Example on p. 16]{Clark}  and therefore derived simple by \cite[4.11]{AKL2}. One checks that
$j_*(k)=eA$ belongs to $K^b(\proj A)$, while $j^*(A)=T$ does not
belong to $K^b(\proj k)$. So this recollement does not restrict to $K^b(\proj)$, although $j_*$ restricts to $K^b(\proj)$.

More generally, this phenomenon occurs  in every recollement induced by a good tilting module. Recall that a module $T_R$ over a ring $R$ is said to be a  {\it good tilting module} if it has finite projective dimension, $\Ext_R^i(T,T^{(\alpha)})=0$ for any cardinal $\alpha$,
and there is an exact sequence of right $R$-modules
$0\ra R\ra T_0\ra\ldots\ra T_n\ra 0$
with $T_0,T_1,\ldots T_n\in \add T$. It was shown by
 Chen and Xi in \cite{CX1,CX4} that  $T_R$ then
   induces a recollement $$\xymatrix@C=6pc{\cd(\Mod B) \ar[r]|{i_*=i_!} &\cd(\Mod A) \ar@<+2.5ex>[l]|{i^!}
\ar@<-2.5ex>[l]|{i^*} \ar[r]|{j^!=j^*} & \cd(\Mod
R).\ar@<+2.5ex>[l]|{j_*} \ar@<-2.5ex>[l]|{j_!} }$$
where $A=\End_R(T)$, $j_*=\RHom_R(T,?)$, and  $j^*=?\lten_A T$.

Since $R$ is quasi-isomorphic to the complex $T_0\ra\ldots\ra T_n$, we see that  $j_*(R)\in K^b(\proj A)$, so  $j_*$ restricts to $K^b(\proj)$
by Lemma \ref{l:restriction-of-functors-to-compact-obj}. On the other hand,  $j^*$ restricts to $K^b(\proj)$ if and only if $j^*(A)\cong T_R\in K^b(\proj R)$, that is, if and only if $T_R$ is a classical tilting module.
}
\end{ex}

\subsubsection{\sc Restricting recollements to $\cd^b(\mod)$} Let $k$ be a
field. Suppose we are given a recollement of the form
(\ref{eq:recollement-on-unbounded-in-section-restricting}) where $A$, and thus also $B$ and $C$, are finite-dimensional $k$-algebras. Recall that in this case $\cd_{fl}$
coincides with $\cd^b(\mod)$.

Let $X$ and $X^{\tr}=\rhom_A(X,A)$ be chosen as in Lemma~\ref{l:finite-sides}(a).

\begin{thm}\label{t:restricting-recollements-to-dbmod}
The following are equivalent for a finite-dimensional algebra $A$:
\begin{itemize}
\item[(i)] the recollement (\ref{eq:recollement-on-unbounded-in-section-restricting})
restricts to a recollement
$$\xymatrix@!=6pc{\cd^b(\mod B) \ar[r]|{i_*=i_!} &\cd^b(\mod A) \ar@<+2.5ex>[l]|{i^!}
\ar@<-2.5ex>[l]|{i^*} \ar[r]|{j^!=j^*} & \cd^b(\mod
C),\ar@<+2.5ex>[l]|{j_*} \ar@<-2.5ex>[l]|{j_!} }$$
\item[(ii)] ${}_CX\in K^b(C\mbox{-}\proj)$ and $(X^{\tr}\lten_C X)_A\in K^b(\proj A)$;
\item[(iii)] ${}_CX\in K^b(C\mbox{-}\proj)$ and $X^{\tr}_C\in K^b(\proj
C)$.
\item[(iv)]   $j_!$ restricts to $\cd^b(\mod)$ and $i_*(B)\in K^b(\proj A)$.
\end{itemize}\label{l:restriction-reco}
\end{thm}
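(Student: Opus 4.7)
My plan is to route the equivalences as (ii) $\Leftrightarrow$ (iii) $\Leftrightarrow$ (iv) $\Leftrightarrow$ (i), where the three conditions (ii), (iii), (iv) will be translated into one another by essentially formal bimodule bookkeeping, and where the real content will lie in the implication (iv) $\Rightarrow$ (i). Within the latter, only the restriction of one of the six functors, namely $i^*$, will require a nontrivial argument.

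For (ii) $\Leftrightarrow$ (iii) $\Leftrightarrow$ (iv), I would first match the ``first halves'' of the three conditions: by Lemma~\ref{l:finite-sides}(a), $j_! = ?\lten_C X$, and then Lemma~\ref{l:existence-of-left-adjoint} (applied to $X$ as a $C$-$A$-bimodule, and using $\cd^b(\mod) = \cd_{fl}$) identifies ${}_CX \in K^b(C\mbox{-}\proj)$ with the statement that $j_!$ restricts to $\cd^b(\mod)$. For the ``second halves'', I would apply the canonical triangle $j_!j^!(A) \to A \to i_*i^*(A) \to j_!j^!(A)[1]$ combined with Lemma~\ref{l:useful-info}(c) (namely, $\tria i_*i^*(A) = \tria i_*(B)$), yielding that $(X^{\tr}\lten_C X)_A = j_!j^!(A) \in K^b(\proj A)$ is equivalent to $i_*i^*(A) \in K^b(\proj A)$ and further to $i_*(B) \in K^b(\proj A)$; Lemma~\ref{l:restriction-to-compact-for-row}(a) will then contribute the remaining equivalence with $X^{\tr}_C = j^*(A) \in K^b(\proj C)$.

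The implication (i) $\Rightarrow$ (iv) will be almost immediate: $j_!$ restricts by (i), and since $i^!$ restricts to $\cd^b(\mod) = \cd_{fl}$, Lemma~\ref{l:restriction-of-right-adjoint} applied to the adjoint pair $(i_*, i^!)$ will force $i_*(B) \in K^b(\proj A)$. For (iv) $\Rightarrow$ (i), I plan to verify that each of the six functors restricts. Two of them, $i^*$ and $j_!$, always restrict to $K^b(\proj)$ by Lemma~\ref{l:useful-info}(e), whence Lemma~\ref{l:restriction-of-right-adjoint} yields that their right adjoints $i_*$ and $j^*$ restrict to $\cd^b(\mod)$. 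The hypothesis $i_*(B) \in K^b(\proj A)$ will give, via the same lemma applied to $(i_*, i^!)$, that $i^!$ also restricts, and via Lemma~\ref{l:restriction-to-compact-for-row}(a) that $j^*(A) \in K^b(\proj C)$, whence $j_*$ restricts similarly. The restriction of $j_!$ is part of (iv).

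The main obstacle will be showing that $i^*$ restricts to $\cd^b(\mod)$. For $M \in \cd^b(\mod A)$, the canonical triangle $j_!j^!(M) \to M \to i_*i^*(M) \to j_!j^!(M)[1]$ will already place $i_*i^*(M)$ in $\cd^b(\mod A)$, since $j^!$ restricts automatically and $j_!$ by hypothesis. To descend along $i_*$, my plan is to observe that the hypothesis $i_*(B) \in K^b(\proj A)$ makes $i_*$ \emph{reflect} $\cd_{fl}$: since $i_*(K^b(\proj B)) = i_*(\tria(B)) \subseteq \tria(i_*(B)) \subseteq K^b(\proj A)$, any $P \in K^b(\proj B)$ yields a compact $i_*(P) \in \cd(\Mod A)$, and full faithfulness of $i_*$ turns the finiteness of $\bigoplus_n \Hom_A(i_*(P), i_*i^*(M)[n])$ (which holds because $i_*i^*(M) \in \cd_{fl}(A)$) into the finiteness of $\bigoplus_n \Hom_B(P, i^*(M)[n])$; Lemma~\ref{l:compact-and-finite-dimensional}(a) will then force $i^*(M) \in \cd_{fl}(B) = \cd^b(\mod B)$, completing the argument.
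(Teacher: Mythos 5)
Your proposal is correct and follows essentially the same route as the paper: the equivalence of (ii), (iii) and (iv) via Lemma~\ref{l:existence-of-left-adjoint} and Lemma~\ref{l:restriction-to-compact-for-row}(a), and the key implication (iv)~$\Rightarrow$~(i) with the canonical triangle placing $i_*i^*(M)$ in $\cd^b(\mod A)$ and full faithfulness of $i_*$ descending to $i^*(M)$ (the paper tests only against $P=B$ via $H^n(i^*(M))=\Hom_A(i_*(B),i_*i^*(M)[n])$, which is equivalent to your test against all of $K^b(\proj B)$). The one genuine, and welcome, variation is your handling of the direction out of (i): you deduce $i_*(B)\in K^b(\proj A)$ from the restriction of $i^!$ via Lemma~\ref{l:restriction-of-right-adjoint} applied to $(i_*,i^!)$, staying entirely within the paper's own lemmas, whereas the paper proves (i)~$\Rightarrow$~(ii) by citing an external result that $j_!$ and $j^!$ preserve compacts.
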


\begin{proof} (i) $\Rightarrow$ (ii): Suppose the recollement can be restricted to the level of $\cd^b(\mod)$.
Applying Lemma \ref{l:existence-of-left-adjoint} to $j_!=?\lten_C
X$ yields that ${}_CX\in K^b(C-\proj)$. Furthermore by
\cite[Corollary 2.4]{AKL3}, the functors $j^!$ and $j_!$ send
compact objects to compact objects. In particular, $j_!j^!(A)\cong
X^{\tr}\lten_C X$, as a complex of right $A$-modules, is compact.

The equivalence of (ii), (iii), and (iv)  holds true by Lemma~\ref{l:existence-of-left-adjoint} and by Lemma~\ref{l:restriction-to-compact-for-row} (a) since $j^*(A)=X^{\tr}_C$
and $j_!j^*(A)=(X^{\tr}\lten_C X)_A$.

(iv) $\Rightarrow$ (i): $i_*$ and $j^*$ always restrict
to $\cd^b(\mod)$ by Lemma~\ref{l:useful-info}(e), and $j_!$ restricts to
$\cd^b(\mod)$ by assumption.
Moreover, since $i_*(B)\in
K^b(\proj A)$, it follows from Lemma~\ref{l:restriction-of-functors-to-compact-obj} and
Lemma~\ref{l:restriction-of-right-adjoint} that $i^!$ restricts to
$\cd^b(\mod)$. Similarly, one shows that $j_*$  restricts
to $\cd^b(\mod)$ since $j^*(A)=X^{\tr}_C\in K^b(\proj C)$.
It remains to check $i^*$.
 For $M\in\cd^b(\mod A)$, there is a triangle
\[\xymatrix{j_!j^*(M)\ar[r] & M\ar[r] & i_*i^*(M)\ar[r] & j_!j^*(M)[1].}\]
Since both $j_!$ and $j^*$ restricts to $\cd^b(\mod)$, the object $j_!j^*(M)$, and hence $i_*i^*(M)$, are in $\cd^b(\mod A)$. Because $i_*$ is
a full embedding, there are isomorphisms
\[H^n(i^*(M))=\Hom_B(B,i^*(M)[n])=\Hom_A(i_*(B),i_*i^*(M)[n]).\]
Since $i_*(B)\in K^b(\proj A)$ and $i_*i^*(M)\in\cd^b(\mod A)$, it
follows that $i^*(M)$ has bounded finite-dimensional cohomologies,
that is, $i^*(M)\in\cd^b(\mod B)$.
\end{proof}

A special case in which condition (iii) is satisfied is when the
algebra $C$ has finite global dimension. Indeed, since $X_A$ is
compact in $\cd(\Mod A)$, as a complex $X$ has finite-dimensional
total cohomological space. Hence $_C X \in \cd^b(C$-mod$)$. When $C$
has finite global dimension, $_CX$ is compact in
$\cd(C$-Mod$)$. A similar argument applies to $X^{\tr}$.

\begin{cor}\label{cor:reco-restriction-C}
Given a  finite-dimensional algebra $A$, a recollement of the form
(\ref{eq:recollement-on-unbounded-in-section-restricting}) can be
restricted to $\cd^b(\mod)$ provided the algebra $C$ has finite
global dimension.
\end{cor}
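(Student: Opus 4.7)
My plan is to verify condition (iii) of Theorem \ref{t:restricting-recollements-to-dbmod} and then invoke that theorem. Thus I need to show that, under the assumption $\gldim C < \infty$, both ${}_CX \in K^b(C\mbox{-}\proj)$ and $X^{\tr}_C \in K^b(\proj C)$, where $X$ is the right bounded complex of projective $C$-$A$-bimodules provided by Lemma~\ref{l:finite-sides}(a).

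The first step is to show that as a complex of $k$-modules, $X$ has finite-dimensional total cohomology. This follows because $X$ is quasi-isomorphic to $T = j_!(C)$ as a right $A$-module complex, and $T$ is a compact object of $\cd(\Mod A)$, hence lies in $K^b(\proj A) \subseteq \cd^b(\mod A)$. Since the total cohomology of $X$ does not depend on which side we view it on, and since $C$ is finite-dimensional, we obtain ${}_CX \in \cd^b(C\mbox{-}\mod)$. The same argument applied to $X^{\tr} = \rhom_A(X,A)$, which is compact and exceptional as a right $C$-module (indeed ${}_AX^{\tr} \in K^b(A\mbox{-}\proj)$ by Lemma~\ref{l:useful-info} applied to the opposite recollement structure, and $X^{\tr}_C \cong (j^!(A))_C \in \cd^b(\mod C)$ by Lemma~\ref{l:useful-info}(e) combined with finite-dimensionality), gives $X^{\tr}_C \in \cd^b(\mod C)$.

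The second step uses finite global dimension of $C$. By the \emph{Fact} established in the proof of Proposition~\ref{fingldim}, the equality $\cd^b(\Mod C) = K^b(\Proj C)$ holds, and restricting to finitely generated modules yields $\cd^b(\mod C) = K^b(\proj C)$ (and similarly for left $C$-modules). Therefore ${}_CX \in K^b(C\mbox{-}\proj)$ and $X^{\tr}_C \in K^b(\proj C)$, which is exactly condition (iii) of Theorem~\ref{t:restricting-recollements-to-dbmod}. Invoking that theorem gives the desired restriction of the recollement to $\cd^b(\mod)$.

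I do not expect any real obstacle here, as the argument is essentially a direct combination of Theorem~\ref{t:restricting-recollements-to-dbmod} with the standard fact that finite global dimension collapses $\cd^b(\mod)$ onto $K^b(\proj)$; the only mild subtlety is to remember that compactness/finiteness of total cohomology can be read off symmetrically from the left and the right since everything is finite-dimensional over the base field $k$.
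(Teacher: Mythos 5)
Your proposal is correct and follows essentially the same route as the paper: the paper also verifies condition (iii) of Theorem~\ref{t:restricting-recollements-to-dbmod} by noting that compactness of $X_A$ forces $X$ (and likewise $X^{\tr}$) to have finite-dimensional total cohomology, so that ${}_CX\in\cd^b(C\mbox{-}\mod)$ and $X^{\tr}_C\in\cd^b(\mod C)$, and then uses $\gldim C<\infty$ to upgrade these to $K^b(C\mbox{-}\proj)$ and $K^b(\proj C)$. Your extra remarks identifying $X^{\tr}_C$ with $j^!(A)$ via Lemma~\ref{l:useful-info}(e) are a harmless alternative justification of the same intermediate step.
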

However, this corollary fails when replacing $C$ by $B$.
Counterexamples will be given below in Example~\ref{e:three-recollements}.

\subsubsection{\sc Restricting recollements to $\cd^b(\Mod)$}

The following result is included in the proof of~\cite[Proposition
4]{Koenig91} and in~\cite[Lemma 4.1, Example 4.6]{AKL2}.

\begin{proposition}\label{p:recollement-restricting-to-dbMod}
The following are equivalent  for a $k$-algebra $A$:
\begin{itemize}
\item[(i)] the recollement (\ref{eq:recollement-on-unbounded-in-section-restricting}) restricts to a recollement
$$\xymatrix@!=6pc{\cd^b(\Mod B) \ar[r]|{i_*=i_!} &\cd^b(\Mod A) \ar@<+2.5ex>[l]|{i^!}
\ar@<-2.5ex>[l]|{i^*} \ar[r]|{j^!=j^*} & \cd^b(\Mod
C),\ar@<+2.5ex>[l]|{j_*} \ar@<-2.5ex>[l]|{j_!} }$$
\item[(ii)] $i_*(B)\in K^b(\Proj A)$ and $j_!$ restricts to
$\cd^b(\Mod)$.
\end{itemize}
\end{proposition}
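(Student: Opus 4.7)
The plan is to show that the six functors of the given recollement restrict to $\cd^b(\Mod)$ precisely when the two conditions in (ii) hold. By Lemma~\ref{l:useful-info}(e), the functors $i^*$ and $j_!$ automatically restrict to $K^b(\proj)\subseteq\cd^b(\Mod)$ on the compact subcategories, while $i_*$ and $j^!=j^*$ restrict to $\cd^b(\Mod)$ everywhere. Hence one only has to check that $i^*$, $i^!$, $j_!$ and $j_*$ restrict on the full $\cd^b(\Mod)$. The key computational tool will be the identity
\[
\Hom_A(i_*(B),X[n])\;\cong\;\Hom_B(B,i^!(X)[n])\;=\;H^n(i^!(X))
\]
for $X\in\cd(\Mod A)$, coming from the adjunction $i_*\dashv i^!$, together with its analogue $H^n(i^*(M))\cong \Hom_A(i_*(B),i_*i^*(M)[n])$ obtained from the fully faithfulness of $i_*$.

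For the implication (i)$\Rightarrow$(ii), the restriction of $j_!$ is immediate from the restricted recollement. For the membership $i_*(B)\in K^b(\Proj A)$, recall first that $i_*(B)\in\cd^b(\Mod A)$ by Lemma~\ref{l:useful-info}(f). Applying the first displayed identity with $X=M$ a module, and using that $i^!$ must also restrict to $\cd^b(\Mod)$, one obtains $\Ext_A^n(i_*(B),M)=0$ for $n$ sufficiently large (depending on $M$). The standard characterisation of objects of finite projective dimension inside $\cd^b(\Mod A)$ then places $i_*(B)$ in $K^b(\Proj A)$.

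For the converse (ii)$\Rightarrow$(i), I would verify the four remaining restrictions in turn. For $i^*$, apply the canonical triangle
\[
\xymatrix{j_!j^*(M)\ar[r]&M\ar[r]&i_*i^*(M)\ar[r]&j_!j^*(M)[1]}
\]
to $M\in\cd^b(\Mod A)$; since $j^*$ automatically and $j_!$ by hypothesis restrict to $\cd^b(\Mod)$, the first term is bounded, hence so is $i_*i^*(M)$. Combined with the second displayed identity and $i_*(B)\in K^b(\Proj A)$, this forces $H^n(i^*(M))=0$ for $|n|$ large, as needed. The restriction of $i^!$ follows in the same way directly from the first displayed identity and the hypothesis $i_*(B)\in K^b(\Proj A)$. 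Finally, to restrict $j_*$, take $N\in\cd^b(\Mod C)$ and set $M=j_!(N)\in\cd^b(\Mod A)$ (using the hypothesis on $j_!$); then $j^*(M)\cong N$, and the canonical triangle $i_*i^!(M)\to M\to j_*j^*(M)\to i_*i^!(M)[1]$ has its first two entries in $\cd^b(\Mod A)$ by the previous step, so $j_*(N)\cong j_*j^*(M)$ lies in $\cd^b(\Mod A)$ as well.

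The main subtle point I expect is the passage from Ext-vanishing to membership in $K^b(\Proj A)$ used in (i)$\Rightarrow$(ii): one invokes the standard but not entirely formal fact that, for a bounded complex $P\in\cd^b(\Mod A)$, belonging to $K^b(\Proj A)$ is equivalent to $\Ext_A^n(P,M)=0$ for $n\gg 0$ for every module $M$. Without this step, the argument would only recover $i_*(B)\in\cd^b(\Mod A)$, which is already part of Lemma~\ref{l:useful-info}(f).
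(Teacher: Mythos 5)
Your proof is correct. There is nothing in the paper to compare it with directly: for this proposition the paper offers no argument of its own, only the remark that the statement is contained in the proof of \cite[Proposition 4]{Koenig91} and in \cite[Lemma 4.1, Example 4.6]{AKL2}. Your argument is, however, exactly in the spirit of the techniques the paper deploys for the finite-dimensional analogue (Theorem~\ref{t:restricting-recollements-to-dbmod}): the adjunction identity $H^n(i^!(X))\cong\Hom_A(i_*(B),X[n])$, its counterpart $H^n(i^*(M))\cong\Hom_A(i_*(B),i_*i^*(M)[n])$ via full faithfulness of $i_*$, and the two canonical triangles; the only adjustment is that $K^b(\proj)$ must everywhere be replaced by $K^b(\Proj)$, which you do correctly. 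The one step that genuinely needs justification is the one you flag in (i)$\Rightarrow$(ii): passing from ``$\Ext^n_A(i_*(B),M)=0$ for $n\gg 0$, with the bound depending on $M$'' to $i_*(B)\in K^b(\Proj A)$. This does go through: since $i_*(B)\in\cd^b(\Mod A)$, truncating a projective resolution beyond the cohomological range reduces the question to whether a single syzygy module $C$ has finite projective dimension, and if it did not, a product $M=\prod_n M_n$ of modules witnessing $\Ext^n(C,M_n)\neq 0$ would contradict the hypothesis --- the same device as the ``Fact'' in the proof of Proposition~\ref{fingldim}. With that supplied, all six functors restrict, the recollement axioms are inherited by the restriction, and the proof is complete.
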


\begin{corollary}\label{c:restricting-is-equivalent-for-dbMod-and-dbmod-for-fd-alg} Let $k$ be a field and let $A$ be a finite-dimensional
$k$-algebra. Then the recollement
(\ref{eq:recollement-on-unbounded-in-section-restricting}) restricts
to $\cd^b(\Mod)$ if and only if it restricts to $\cd^b(\mod)$.
\end{corollary}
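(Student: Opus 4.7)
My plan is to match the explicit criteria of Proposition~\ref{p:recollement-restricting-to-dbMod} against those of Theorem~\ref{t:restricting-recollements-to-dbmod}(iv). The former characterises restriction of the recollement to $\cd^b(\Mod)$ by the pair of conditions $i_*(B)\in K^b(\Proj A)$ and ``$j_!$ restricts to $\cd^b(\Mod)$'', while the latter characterises restriction to $\cd^b(\mod)$ by the pair $i_*(B)\in K^b(\proj A)$ and ``$j_!$ restricts to $\cd^b(\mod)$''. Since $K^b(\proj A)\subseteq K^b(\Proj A)$ and $\cd^b(\mod)\subseteq\cd^b(\Mod)$, the corollary reduces to showing these two pairs of conditions are equivalent when $A$ is finite-dimensional over $k$.

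For the direction ``$\cd^b(\mod)$-restriction implies $\cd^b(\Mod)$-restriction'', the only step that needs work is that $j_!$ sends $\cd^b(\Mod C)$ into $\cd^b(\Mod A)$. Under the $\cd^b(\mod)$-hypothesis, Theorem~\ref{t:restricting-recollements-to-dbmod}(ii) together with Lemma~\ref{l:finite-sides}(a) allow us to realise $j_!\cong\,?\lten_C X$ with ${}_CX\in K^b(C\mbox{-}\proj)$, so $X$ is quasi-isomorphic to a bounded complex of finitely generated projective left $C$-modules. Tensoring any bounded complex of right $C$-modules with such an $X$ manifestly preserves boundedness of cohomology, yielding the claim.

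For the converse, suppose the recollement restricts to $\cd^b(\Mod)$. If $M\in\cd^b(\mod C)$, then Lemma~\ref{l:finite-sides}(b) gives $j_!(M)\in\cd^-(\mod A)$, while the hypothesis gives $j_!(M)\in\cd^b(\Mod A)$; intersecting yields $j_!(M)\in\cd^b(\mod A)$, so $j_!$ already restricts to $\cd^b(\mod)$. The remaining and main technical task is to upgrade $i_*(B)\in K^b(\Proj A)$ to $i_*(B)\in K^b(\proj A)$. Since Lemma~\ref{l:finite-sides}(b) provides $i_*(B)\in\cd^b(\mod A)$, this reduces to the standard identity $K^b(\Proj A)\cap\cd^b(\mod A)=K^b(\proj A)$ over a finite-dimensional algebra. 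I would verify this identity via minimal projective representatives: any $M\in\cd^b(\mod A)$ admits a minimal representative $Q$ that is right bounded with finitely generated projective components (as recalled just before Lemma~\ref{l:compact-and-finite-dimensional}); for $M$ also in $K^b(\Proj A)$, the group $\Hom_{\cd(\Mod A)}(M,S[n])$ vanishes for $|n|$ large and every simple $A$-module $S$, because it can be computed from any bounded projective representative of $M$ as $H^n$ of a bounded complex of $k$-modules. On the side of $Q$, minimality forces the differentials of $\Hom^\bullet_A(Q,S)$ to be zero, so this cohomology is $\Hom_A(Q^{-n},S)$; its vanishing therefore forces $Q^{-n}/\rad(Q^{-n})=0$, whence $Q^{-n}=0$ by Nakayama for $|n|$ large. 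Consequently $Q$ is bounded and $M\in K^b(\proj A)$.

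The only non-formal ingredient is this intersection identity, which I expect to be the main technical hurdle; all other steps are straightforward assembly of the criteria in Proposition~\ref{p:recollement-restricting-to-dbMod} and Theorem~\ref{t:restricting-recollements-to-dbmod}.
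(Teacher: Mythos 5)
Your proof is correct and follows the same overall strategy as the paper: both reduce the statement to matching the criteria of Proposition~\ref{p:recollement-restricting-to-dbMod} against those of Theorem~\ref{t:restricting-recollements-to-dbmod}, and both use Lemma~\ref{l:finite-sides}(b) together with the compactness of ${}_CX$ as the key inputs. The two points where you diverge are worth noting. For the direction ``restricts to $\cd^b(\mod)$ implies restricts to $\cd^b(\Mod)$'', the paper does not tensor directly: it invokes the equivalence (ii)$\Leftrightarrow$(iii) of Lemma~\ref{l:existence-of-left-adjoint} to produce a left adjoint of $j_!$ restricting to $K^b(\proj)$, and then Lemma~\ref{l:restriction-of-right-adjoint} (i)$\Rightarrow$(ii) to conclude that $j_!$, being a right adjoint of a compact-preserving functor, restricts to $\cd^b(\Mod)$. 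Your direct argument --- replace ${}_CX$ by a bounded complex of projectives and observe that tensoring preserves bounded cohomology (which one should justify by d\'evissage on the length of that complex) --- is more elementary and equally valid. For the converse direction, the paper's proof is a one-line appeal to Lemma~\ref{l:finite-sides}(b), which silently uses the identities $\cd^-(\mod A)\cap\cd^b(\Mod A)=\cd^b(\mod A)$ and $K^b(\Proj A)\cap\cd^b(\mod A)=K^b(\proj A)$; you make these explicit and give a correct proof of the second via minimal representatives and Nakayama, which is exactly the standard argument filling the gap the paper leaves to the reader.
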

\begin{proof}
By Theorem~\ref{t:restricting-recollements-to-dbmod} and Proposition~\ref{p:recollement-restricting-to-dbMod}, we have to show that
the following  are equivalent:
{\it
\begin{itemize}
\item[(a)] $i_*(B)\in K^b(\Proj A)$ and $j_!$ restricts to
$\cd^b(\Mod)$.
\item[(b)] $i_*(B)\in K^b(\proj A)$ and $j_!$ restricts to
$\cd^b(\mod)$.
\end{itemize}
}
Condition (a) implies (b) by Lemma~\ref{l:finite-sides} (b), which shows that
$i_*(B)\in \cd^b(\mod A)$ and $j_!$ restricts to $\cd^b(\mod
C)\rightarrow\cd^-(\mod A)$.
Conversely,
let $X$ be as in Lemma~\ref{l:finite-sides}(a)
such that
$j_!=?\lten_C X$. Under condition (b), the equivalence
(ii)$\Leftrightarrow$(iii) of
Lemma~\ref{l:existence-of-left-adjoint} and the implication
(i)$\Rightarrow$(ii) of Lemma~\ref{l:restriction-of-right-adjoint} imply
that $j_!$ restricts to $\cd^b(\Mod)$. This completes the proof.
\end{proof}

There is a $\cd^b(\Mod)$-counterpart of
Corollary~\ref{cor:reco-restriction-C}.

\begin{corollary}\label{c:restricting-to-dbMod-when-C-has-finite-global-dimension}
If $C$ has finite global dimension, then the recollement
(\ref{eq:recollement-on-unbounded-in-section-restricting}) restricts
to $\cd^b(\Mod)$.
\end{corollary}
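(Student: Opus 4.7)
The plan is to reduce to Proposition \ref{p:recollement-restricting-to-dbMod}, so I only need to verify its two conditions: that $i_*(B)\in K^b(\Proj A)$ and that $j_!$ restricts to $\cd^b(\Mod)$. The key input is the finiteness of $\gldim(C)$, which gives the identification $\cd^b(\Mod C)=K^b(\Proj C)$ (the same ``Fact'' that opens the proof of Proposition \ref{p:finiteness-of-global-dimension}).

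First I would show that $j_!$ restricts to $\cd^b(\Mod)$. Since $j_!(C)\in K^b(\proj A)$ by Lemma \ref{l:useful-info}(e) and $j_!$ is a triangle functor commuting with arbitrary direct sums, it sends $\Proj C$ (i.e.\ summands of direct sums of copies of $C$) into $K^b(\Proj A)$, and hence by d\'evissage it sends $K^b(\Proj C)$ into $K^b(\Proj A)\subseteq \cd^b(\Mod A)$. Using $\cd^b(\Mod C)=K^b(\Proj C)$ we conclude that $j_!(\cd^b(\Mod C))\subseteq \cd^b(\Mod A)$.

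Next I would show $i_*(B)\in K^b(\Proj A)$, essentially repeating the second half of the proof of Proposition \ref{p:finiteness-of-global-dimension}. By Lemma \ref{l:useful-info}(e), $j^*(A)\in \cd^b(\Mod C)$, so by the hypothesis $j^*(A)\in K^b(\Proj C)$. The previous paragraph then gives $j_!j^*(A)\in K^b(\Proj A)$. The canonical triangle
\[
\xymatrix{j_!j^*(A)\ar[r]& A\ar[r] & i_*i^*(A)\ar[r] & j_!j^*(A)[1]}
\]
yields $i_*i^*(A)\in K^b(\Proj A)$, and Lemma \ref{l:useful-info}(c) upgrades this to $i_*(B)\in K^b(\Proj A)$.

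There is no real obstacle here: both conditions of Proposition \ref{p:recollement-restricting-to-dbMod} follow quickly once one has the identification $\cd^b(\Mod C)=K^b(\Proj C)$. The only subtle point to keep in mind is that $j_!$ must be applied to arbitrary (not just finitely generated) projectives, which is why we need it to commute with infinite direct sums rather than merely to preserve compactness.
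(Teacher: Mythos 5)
Your proposal is correct and follows the same route as the paper: the paper's proof simply cites the relevant portion of the proof of Proposition~\ref{p:finiteness-of-global-dimension} (where $j_!(K^b(\Proj C))\subseteq K^b(\Proj A)$ and $i_*(B)\in K^b(\Proj A)$ are established from $\gldim C<\infty$ via the same canonical triangle and Lemma~\ref{l:useful-info}(c) and (e)) and then applies Proposition~\ref{p:recollement-restricting-to-dbMod}, exactly as you do. You have merely written out those cited steps in full, and your remark about $j_!$ commuting with arbitrary direct sums is precisely the point the paper also relies on.
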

\begin{proof}
Assume that $C$ has finite global dimension. In the proof of Proposition~\ref{p:finiteness-of-global-dimension} it has been shown that $i_*(B)\in K^b(\Proj A)$ and $j_!$ restricts to $K^b(\Proj)$. Since $\cd^b(\Mod C)\cong K^b(\Proj C)$ and $K^b(\Proj A)\subseteq \cd^b(\Mod A)$, it follows that $j_!$ restricts to $\cd^b(\Mod)$. The desired result is obtained by applying Proposition~\ref{p:recollement-restricting-to-dbMod}.
\end{proof}

\subsubsection{\sc Restricting recollements to $\cd^-(\Mod)$}
\begin{proposition}\label{p:recollement-restricting-to-d-Mod} The following
are equivalent:
\begin{itemize}
\item[(i)] the recollement (\ref{eq:recollement-on-unbounded-in-section-restricting}) restricts to a recollement
$$\xymatrix@!=6pc{\cd^-(\Mod B) \ar[r]|{i_*=i_!} &\cd^-(\Mod A) \ar@<+2.5ex>[l]|{i^!}
\ar@<-2.5ex>[l]|{i^*} \ar[r]|{j^!=j^*} & \cd^-(\Mod
C),\ar@<+2.5ex>[l]|{j_*} \ar@<-2.5ex>[l]|{j_!} }$$
\item[(ii)] $i_*(B)\in K^b(\Proj A)$.
\end{itemize}
If $k$ is a field and $A$ is finite-dimensional over $k$, then both
conditions are equivalent to
\begin{itemize}
\item[(iii)] $i_*(B)\in K^b(\proj A)$.
\end{itemize}
\end{proposition}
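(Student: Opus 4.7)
The plan is to adapt the strategy of Proposition~\ref{p:recollement-restricting-to-dbMod}, exploiting that by Lemma~\ref{l:useful-info}(e) the functors $i^*$ and $j_!$ restrict to $K^b(\proj)$ while $i_*$ and $j^!=j^*$ restrict to $\cd^b(\Mod)$; all of these sit inside $\cd^-(\Mod)$, so the real content of the statement is that the restriction of the remaining two functors $i^!$ and $j_*$ is controlled by the single condition $i_*(B)\in K^b(\Proj A)$.

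For the implication (ii)$\Rightarrow$(i), I would first observe that $i_*$ restricts to $\cd^-(\Mod B)\to\cd^-(\Mod A)$: $i_*$ is a left adjoint of $i^!$, hence commutes with direct sums, and the hypothesis $i_*(B)\in K^b(\Proj A)\subseteq\cd^{\leq b}(\Mod A)$ for some $b$, combined with K-projective resolutions of objects in $\cd^-(\Mod B)$, yields the claim. An analogous argument handles $j_!$, using $j_!(C)\in K^b(\proj A)$. Next, $i^!$ restricts to $\cd^-$ via the adjunction identity $H^n(i^!(M))\cong\Hom_{\cd(\Mod A)}(i_*(B),M[n])$: since $i_*(B)$ has finite length $d$ as an object of $K^b(\Proj A)$ and $M\in\cd^-(\Mod A)$, a standard truncation argument shows the right-hand side vanishes for $n$ sufficiently large. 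Finally, for $j_*$ I would apply the canonical triangle with $M=j_!(N)$,
\[
i_*i^!(j_!(N))\to j_!(N)\to j_*j^*(j_!(N))=j_*(N)\to i_*i^!(j_!(N))[1],
\]
and observe that for $N\in\cd^-(\Mod C)$ we have $j_!(N)\in\cd^-(\Mod A)$, hence $i^!(j_!(N))\in\cd^-(\Mod B)$, hence $i_*i^!(j_!(N))\in\cd^-(\Mod A)$, forcing $j_*(N)\in\cd^-(\Mod A)$.

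For the implication (i)$\Rightarrow$(ii), the idea is to derive a contradiction to the restriction of $i^!$ from $i_*(B)\notin K^b(\Proj A)$. We already have $i_*(B)\in\cd^b(\Mod A)$ by Lemma~\ref{l:useful-info}(e), so the failure means $i_*(B)$ has infinite projective dimension. A syzygy argument then supplies modules $M_k\in\Mod A$ and integers $n_k\to\infty$ with $\Ext^{n_k}_A(i_*(B),M_k)\neq 0$. Setting $M=\prod_k M_k\in\Mod A$, the fact that $\Ext^i_A(i_*(B),?)$ commutes with direct products yields $\Ext^{n_k}_A(i_*(B),M)\neq 0$ for all $k$; via the adjunction $H^n(i^!(M))\cong\Ext^n_A(i_*(B),M)$ this means $i^!(M)$ has non-vanishing cohomology in arbitrarily high degrees, contradicting $i^!(M)\in\cd^-(\Mod B)$. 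I view this step as the main obstacle: the passage from \emph{pointwise} to \emph{uniform} unboundedness of $\Ext$ is essential, and relies crucially on products rather than sums (sums of shifted modules would leave $\cd^-$).

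For the equivalence (ii)$\Leftrightarrow$(iii) in the finite-dimensional case, Lemma~\ref{l:finite-sides}(b) supplies $i_*(B)\in\cd^b(\mod A)$, and a bounded complex of finitely generated modules over a finite-dimensional algebra that has finite projective dimension admits a minimal projective resolution whose components are finitely generated, hence already lies in $K^b(\proj A)$; the reverse inclusion $K^b(\proj A)\subseteq K^b(\Proj A)$ is trivial.
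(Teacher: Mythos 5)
Your proof is correct, but it takes a genuinely more self-contained route than the paper: the paper disposes of (i)$\Leftrightarrow$(ii) simply by citing \cite[Lemma 4.4]{AKL2}, and handles (ii)$\Leftrightarrow$(iii) by the argument of Corollary~\ref{c:restricting-is-equivalent-for-dbMod-and-dbmod-for-fd-alg} (via Lemma~\ref{l:finite-sides}(b), Lemma~\ref{l:existence-of-left-adjoint} and Lemma~\ref{l:restriction-of-right-adjoint}), whereas you reconstruct the content of the cited lemma from scratch. Your reconstruction is sound. For (ii)$\Rightarrow$(i) the only functors whose restriction is genuinely in doubt are $i^!$ and $j_*$: in fact $i_*$, like $i^*$, $j_!$ and $j^!$, restricts to $\cd^-(\Mod)$ unconditionally (e.g.\ because $H^n(i_*(M))\cong\Hom_{\cd(\Mod B)}(i^*(A),M[n])$ and $i^*(A)\in K^b(\proj B)$ by Lemma~\ref{l:useful-info}(e)), so your hypothesis is not needed there; your adjunction computation $H^n(i^!(M))\cong\Hom_{\cd(\Mod A)}(i_*(B),M[n])$ for $i^!$ and the canonical triangle applied to $j_!(N)$ for $j_*$ settle the two remaining functors correctly. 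For (i)$\Rightarrow$(ii) your syzygy-plus-product construction is exactly the standard argument behind the cited lemma; the only quibble is the parenthetical claim that products are ``essential'': the split inclusions $M_k\hookrightarrow\bigoplus_j M_j$ would detect the non-vanishing $\Ext$-groups equally well without any compactness of $i_*(B)$, and $\bigoplus_k M_k[n_k]$ with $n_k\to+\infty$ has cohomology in degrees $-n_k\to-\infty$ and hence stays in $\cd^-$ — but this is harmless, since the product of unshifted modules you actually use is a module and certainly lies in $\cd^-(\Mod A)$. Your minimal-resolution argument for (ii)$\Leftrightarrow$(iii) is a legitimate and slightly more direct substitute for the paper's detour through Lemma~\ref{l:existence-of-left-adjoint}.
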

\begin{proof} The equivalence between (i) and (ii) is \cite[Lemma
4.4]{AKL2}. The rest is as in the proof of
Corollary~\ref{c:restricting-is-equivalent-for-dbMod-and-dbmod-for-fd-alg}.
\end{proof}

\subsubsection{\sc Special case: $A$ has finite global dimension}

\begin{proposition}\label{p:restricting-for-algebra-of-finite-global-dimension}
If $A$ has finite global dimension, then the recollement
(\ref{eq:recollement-on-unbounded-in-section-restricting}) restricts
to $\cd^-(\Mod)$ and $\cd^b(\Mod)$. If in addition $k$ is a field
and $A$ is a finite-dimensional $k$-algebra, the recollement
(\ref{eq:recollement-on-unbounded-in-section-restricting}) restricts
to $\cd^b(\mod)$ as well.
\end{proposition}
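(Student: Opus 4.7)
The plan is to reduce everything to the key fact that $A$ having finite global dimension forces $i_*(B)$ to be perfect, together with the transferability of finiteness of global dimension through recollements established in Proposition~\ref{p:finiteness-of-global-dimension}.

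First, I would invoke Proposition~\ref{p:finiteness-of-global-dimension} to conclude that both $B$ and $C$ also have finite global dimension. This is the main input needed to handle the right-hand side of the recollement.

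Next, for the restriction to $\cd^-(\Mod)$, by Proposition~\ref{p:recollement-restricting-to-d-Mod} it suffices to show $i_*(B)\in K^b(\Proj A)$. But Lemma~\ref{l:useful-info}(f) tells us that $i_*(B)\in\cd^b(\Mod A)$, and the finiteness of $\gldim A$ gives the identification $\cd^b(\Mod A)=K^b(\Proj A)$ (using the \emph{Fact} from the proof of Proposition~\ref{fingldim}). Hence $i_*(B)\in K^b(\Proj A)$ and the restriction to $\cd^-(\Mod)$ follows.

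For the restriction to $\cd^b(\Mod)$, I would appeal to Corollary~\ref{c:restricting-to-dbMod-when-C-has-finite-global-dimension} directly, since we have just observed that $\gldim C<\infty$. (Equivalently, one can use Proposition~\ref{p:recollement-restricting-to-dbMod}: the condition $i_*(B)\in K^b(\Proj A)$ has already been verified, and $j_!$ restricts to $\cd^b(\Mod)$ because $j_!$ sends the compact generator $C$ into $K^b(\proj A)$ by Lemma~\ref{l:useful-info}(e), commutes with coproducts, and $\cd^b(\Mod C)=K^b(\Proj C)$.) Finally, in the finite-dimensional setting, the restriction to $\cd^b(\mod)$ is then immediate from Corollary~\ref{c:restricting-is-equivalent-for-dbMod-and-dbmod-for-fd-alg}, which identifies the $\cd^b(\Mod)$-restriction with the $\cd^b(\mod)$-restriction for finite-dimensional algebras (or alternatively from Corollary~\ref{cor:reco-restriction-C} applied to $C$). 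No step here requires new work; the proof is purely an assembly of the earlier results, with the only conceptual content being the equivalence of $\cd^b(\Mod A)$ with $K^b(\Proj A)$ under the finite global dimension hypothesis.
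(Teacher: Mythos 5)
Your proof is correct and follows essentially the same route as the paper: the paper's own proof cites the proof of Proposition~\ref{p:finiteness-of-global-dimension} (where $i_*(B)\in\cd^b(\Mod A)\cong K^b(\Proj A)$ is established exactly as you argue), Corollary~\ref{c:restricting-to-dbMod-when-C-has-finite-global-dimension} for the $\cd^b(\Mod)$ case, and Corollary~\ref{c:restricting-is-equivalent-for-dbMod-and-dbmod-for-fd-alg} for the $\cd^b(\mod)$ case. Your assembly of the earlier results, including the use of $\gldim C<\infty$ from Proposition~\ref{p:finiteness-of-global-dimension}, matches the intended argument.
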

\begin{proof} The first statement follows from the proof of
Proposition~\ref{p:finiteness-of-global-dimension}
and Corollary~\ref{c:restricting-to-dbMod-when-C-has-finite-global-dimension}.
The second statement follows from
Corollary~\ref{c:restricting-is-equivalent-for-dbMod-and-dbmod-for-fd-alg}.
\end{proof}

\subsection{An example}

The following example \ref{e:three-recollements} illustrates, in particular,
the lack of symmetry in Corollary \ref{cor:reco-restriction-C}.
There are three $\cd(\Mod)$-recollements. One of them cannot be
restricted to any of $\cd^-(\Mod)$, $\cd^b(\Mod)$, $\cd^b(\mod)$ or
$K^b(\proj)$. The other two restrict to $\cd^-(\Mod)$, with one
further restricting to $\cd^b(\Mod)$ and $\cd^b(\mod)$ and the other
one further restricting to $K^b(\proj)$.

\begin{ex}\label{e:three-recollements}
{\rm Let $k$ be a field and $A$ be the $k$-algebra given by quiver and relations
\[\xymatrix{1 & 2\ar[l]_{\alpha}\ar@(ur,dr)^{\beta}},~~\beta^2,~\alpha\beta.\]
 Denote by
$P_i=e_iA$ and $S_i$, respectively, the indecomposable projective
and simple module at the vertex $i=1,2$. So $P_1$ has composition
series $\begin{smallmatrix} 1\\2\end{smallmatrix}$ and $P_2$ has
composition series $\begin{smallmatrix} 2\\2\end{smallmatrix}$.

As explained in Section \ref{reminder}, any  recollement is generated by a compact exceptional object. So we start out by showing that  $P_1$, $P_2$ and
$\Cone(P_2\stackrel{\alpha}{\rightarrow}P_1)$ are the only indecomposable exceptional
  objects in $K^b(\proj A)$ up to shift.

Let $X$ be an indecomposable exceptional object in $K^b(\proj A)$. Its terms
are direct sums of copies of $P_1$  and of $P_2$. If the first and the last non-zero
term have a common summand, the identity map on this summand gives a
morphism, which is not homotopic to zero, from $X$ to a shifted copy of $X$.
There are no non-zero maps from $P_1$ to $P_2$, and non-zero
endomorphisms of $P_1$ are isomorphisms. Thus, if $X$ is chosen minimal and has
length at least two, it cannot start with copies of $P_1$.
Therefore, we may assume that $X$ is minimal of the form
{
\[\xymatrix{P_2^{\oplus n_a}\ar[r]&P_1^{\oplus m_{a+1}}\oplus P_2^{\oplus n_{a+1}}\ar[r]&\ldots\ar[r]&P_1^{\oplus m_{b}}\oplus P_2^{\oplus n_{b}}\ar[r] & P_1^{\oplus m_{b+1}}},\]
}
where $a$ and $b$ are integers indicating the degrees, and $n_a$ and
{$n_{b}$}
are positive. Let $f:P_2^{\oplus n_{b-1}}\rightarrow P_2^{\oplus n_a}$ be a nonzero morphism which, in matrix form, has entries from {$e_2Je_2$.}
Then
\[\xymatrix@C=1pc@R=1pc{P_2^{\oplus n_a}\ar[r]&P_1^{\oplus m_{a+1}}\oplus P_2^{\oplus n_{a+1}}\ar[r]&\ldots\ar[r]&P_1^{\oplus m_{b-1}}\oplus P_2^{\oplus n_{b-1}}\ar[d]^{(0,f)}\ar[r] & P_1^{\oplus m_b}\ar[d]^0\\
&&&P_2^{n_a}\ar[r]&P_1^{\oplus m_{a+1}}\oplus P_2^{\oplus n_{a+1}}\ar[r]&\ldots}\]
is a chain map which is not homotopic to zero. Therefore $\Hom(X,X[a-b])\neq 0$. Since $X$ is exceptional, this forces $a=b$. It follows that up to shift $X$ is isomorphic to one of the stated objects.

\smallskip

(a) Consider the recollement generated by $P_1=e_1A$, which has endomorphism algebra
$\End_A(P_1)=e_1Ae_1\cong k$. As a right $A$-module $A/Ae_1A$ is isomorphic to $P_2$. So by Lemma~\ref{l:criterion-stratifying-ideal}, the
canonical map $A\rightarrow A/Ae_1A\cong k[x]/x^2$ is a homological
epimorphism, and the recollement has the form
\[\xymatrix@C=6pc{\cd({\Mod}A/Ae_1A) \ar[r]|{i_*=i_!} &\cd(\Mod A) \ar@<+2.5ex>[l]|{i^!}
\ar@<-2.5ex>[l]|{i^*} \ar[r]|{j^!=j^*} &
\cd({\Mod}e_1Ae_1).\ar@<+2.5ex>[l]|{j_*} \ar@<-2.5ex>[l]|{j_!}}
\]
Then \begin{itemize}
\item[--] $i_*(A/Ae_1A)=P_2\in K^b(\proj A)$,
\item[--] the algebra on the right hand side $e_1Ae_1\cong k$ has global dimension $0$,
\item[--] $j_*(e_1Ae_1)=S_1\not\in K^b(\proj A)$. \end{itemize}
It follows that the recollement restricts to $\cd^-(\Mod)$
(Proposition~\ref{p:recollement-restricting-to-d-Mod}),
$\cd^b(\Mod)$
(Corollary~\ref{c:restricting-to-dbMod-when-C-has-finite-global-dimension})
and $\cd^b(\mod)$ (Corollary~\ref{cor:reco-restriction-C}), but not
to $K^b(\proj)$ (Theorem~\ref{t:recollement-restricting-to-kbproj}).

\smallskip

(b) Consider the recollement generated by
$M=\Cone(P_2\stackrel{\alpha}{\rightarrow}P_1)$. The endomorphism
ring $\End(M)$ of $M$ is $k[x]/x^2$, where $x$ is represented by the
endomorphism
 \[\xymatrix@R=1.5pc{P_2\ar[d]^{\beta}\ar[r]^{\alpha}&P_1\ar[d]^{0}\\
 P_2\ar[r]^{\alpha}&P_1.}\]
The object $M$ admits a real action (not up to homotopy, actually there are
no homotopies) by $\End(M)$ on the left.

In order to determine the recollement, we consider the following triangle:
\[
\xymatrix{
M[-1]\ar[r]& A=P_1\oplus P_2\ar[r]& P_1\oplus P_1\ar[r]& M.
}
\]
Since $M[-1]\in\Tria M$ and $P_1\oplus P_1\in(\Tria M)^\perp$, and canonical
triangles are unique, this must be the canonical triangle of $A$. It also
can be found by the method explained in \cite[Appendix]{AKL1}.

Clearly $P_1\oplus P_1=i^*i_*(A)$ is compact and exceptional with endomorphism
algebra $M_2(k)$, the algebra of $2\times 2$-matrices with entries
in $k$. It follows from \cite[Proposition 1.7]{AKL1} that the recollement generated by $M$ is induced by a
homological epimorphism $\lambda: A\to M_2(k)$ and has the form
\[\xymatrix@!=9pc{\cd(\Mod M_2(k)) \ar[r]|{i_*=i_!} &\cd(\Mod A) \ar@<+2.5ex>[l]|{i^!}
\ar@<-2.5ex>[l]|{i^*} \ar[r]|{j^!=j^*} & \cd(\Mod
\End(M)),\ar@<+2.5ex>[l]|{j_*} \ar@<-2.5ex>[l]|{j_!}}
\]
where the left column is induced from $\lambda$, cf.~Section \ref{sss:construction-of-recollements}. Moreover, since $M$ is a complex of $\End(M)$-$A$-bimodules,
$j_!\cong ? \lten_{\End(M)}M$. Then
\begin{itemize}
\item[--] it
follows from the canonical triangle that $M[-1]=j_!j^!(A)\in
K^b(\proj A)$;
\item[--]  $i_*(M_2(k))=P_1\oplus P_1\in K^b(\proj A)$, by Lemma \ref{l:useful-info}(c), using that  $M_2(K)$ is semisimple;
\item[--] ${}_{\End(M)}M\not\in
K^b(\End(M)\mbox{-}\proj)$, because $M \cong k\oplus k[1]$ over $\End(M)$,
where $k$ is the unique simple $\End(M)$-module;
\item[--] $i^!(A)={\mathbf R}\text{Hom}_A(M_2(k),A)\cong{\mathbf R}\text{Hom}_A(P_1\oplus P_1,A) \in K^b(\proj M_2(k))$.
\end{itemize}
It follows that the recollement restricts to $\cd^-(\Mod)$
(Proposition~\ref{p:recollement-restricting-to-d-Mod}) and
$K^b(\proj)$ (Theorem~\ref{t:recollement-restricting-to-kbproj}),
but not to $\cd^b(\Mod)$, nor to $\cd^b(\mod)$
(Theorem~\ref{t:restricting-recollements-to-dbmod} and
Corollary~\ref{c:restricting-is-equivalent-for-dbMod-and-dbmod-for-fd-alg}).
This is an example where $B$ has finite global dimension but the
recollement cannot be restricted to $\cd^b(\mod)$.

\smallskip

(c) Consider the recollement generated by $P_2=e_2A$, whose endomorphism algebra is
$\End_A(P_2)=e_2Ae_2\cong k[x]/x^2$.
As a right $A$-module $A/Ae_2A$ is isomorphic to $S_1$ and it has a projective resolution
\[\xymatrix{\ldots\ar[r]&P_2\ar[r]&\ldots\ar[r] & P_2\ar[r] & P_1\ar[r] & A/Ae_2A\ar[r] & 0}.\]
Therefore by Lemma~\ref{l:criterion-stratifying-ideal} the canonical projection $\mu:A\rightarrow A/Ae_2A \cong k$ is a
homological epimorphism. The recollement has the form
\[\xymatrix@!=9pc{\cd(\Mod k) \ar[r]|{i_*=i_!} &\cd(\Mod A) \ar@<+2.5ex>[l]|{i^!}
\ar@<-2.5ex>[l]|{i^*} \ar[r]|{j^!=j^*} & \cd(\Mod
k[x]/x^2)\ar@<+2.5ex>[l]|{j_*} \ar@<-2.5ex>[l]|{j_!}}
\] where the left column is induced by the projection $\mu$ and
$j_!=?\lten_k P_2$. Then
\begin{itemize}
\item[--] ${}_{e_2Ae_2}P_2\cong e_2Ae_2\in K^b(e_2Ae_2\mbox{-}\proj)$,
\item[--] $i_*(k)=S_1\not\in K^b(\proj
A)$, \item[--] $i^!(A)=k\in K^b(\proj k)$.\end{itemize} It follows
that this recollement cannot be restricted to $\cd^-(\Mod)$
(Proposition~\ref{p:recollement-restricting-to-d-Mod}),
$\cd^b(\Mod)$
(Proposition~\ref{p:recollement-restricting-to-dbMod}),
$\cd^b(\mod)$ (Theorem~\ref{t:restricting-recollements-to-dbmod}), nor to
$K^b(\proj)$ (Theorem~\ref{t:recollement-restricting-to-kbproj}).
This shows that $i^!(A)$ being compact or even the algebra on the left
side having finite global dimension is not enough for the recollement
to restrict to $K^b(\proj)$.

\smallskip

To summarise, up to equivalence there are only three non-trivial
recollements of $\cd(\Mod A)$ by derived module categories.
In fact, the three
recollements can be put into one (complete) ladder of height $3$
\vspace{5pt}
\[\xymatrix@C=5pc{\cd(\Mod k)
\ar@<+1.5ex>[r] \ar@<-1.5ex>[r] &\cd(\Mod A) \ar[l] \ar@<-3.0ex>[l]
\ar@<+3.0ex>[l] \ar@<+1.5ex>[r] \ar@<-1.5ex>[r] & \cd(\Mod
k[x]/x^2),\ar[l] \ar@<-3.0ex>[l] \ar@<+3.0ex>[l]}
\]
\vspace{5pt}
which corresponds to the TTF quintuple
\[(\Tria(M),\Tria(P_1),\Tria(P_2),\Tria(S_1),\Tria(S_1)^\perp).\]
}
\end{ex}

\bigskip

\subsection{A correction}\label{corrigendum}
We take the opportunity to correct a mistake  from \cite{AKL3}. In \cite[Section 1.3]{AKL3}, it is stated that a homological ring epimorphism $\varphi:A\to B$ between two finite-dimensional algebras always induces a recollement of $\cd^b(\mod A)$ by $\cd^b(\mod B)$ and a triangulated category $\mathcal X$. This is then used in \cite[Theorem 3.3]{AKL3} for showing that a finitely generated tilting module over a finite-dimensional algebra $A$ induces a  recollement of $\cd^b(\mod A)$ by the bounded derived categories $\cd^b(\mod B)$ and $\cd^b(\mod C)$ of two algebras $B,C$ constructed from $T$.

Unfortunately, the assumption that $B$ has finite projective dimension both as a right $A$-module and as a left $A$-module is missing in both statements.
In fact, without this assumption the statements fail: a counterexample is provided by  the recollement (b) in Example \ref{e:three-recollements}  (take the ring epimorphism $\lambda:A\to M_2(k)$ and the tilting module $T= A$ with the $T$-resolution $0\to A\to A \oplus e_2 A \to e_2A \to 0$).

We remark that the statements were later employed in the context when $A$ has finite global dimension. Then it holds naturally that $\pd(B_A)<\infty$ and $\pd(_AB)<\infty$, thus the remaining results of \cite{AKL3} are not affected by this mistake.

Let us now prove the statements under the additional assumption. Take a homological ring epimorphism $\varphi:A\to B$ between two finite-dimensional algebras. It induces a recollement at $\cd(\Mod)$-level
\[
\xy (-61,0)*{\cd(\Mod B)}; {\ar (-34,3)*{}; (-50,3)*{}_{i^{\ast}}}; {\ar
(-50,0)*{}; (-34,0)*{}|{ i_{\ast}=i_!}}; {\ar(-34,-3)*{};
(-50,-3)*{}^{i^!}}; (-22,0)*{\cd(\Mod A)}; {\ar (3,3)*{}; (-11,3)*{}};
{\ar (-11,0)*{}; (3,0)*{}}; {\ar(3,-3)*{}; (-11,-3)*{}};
(8,0)*{\mcx};
\endxy
\]
where $i_\ast$ restricts to $\cd^b(\mod)$.  Combining
Lemma~\ref{l:restriction-of-right-adjoint} and \ref{l:restriction-of-functors-to-compact-obj}, we see that $i^!$ restricts to $\cd^b(\mod)$ if and only if $i_\ast$ restricts to $K^b(\proj)$, which in turn means that $B_A$ is a compact object. Moreover, we infer from Lemma~\ref{l:existence-of-left-adjoint} that $i^\ast=?\lten_AB$ restricts to $\cd^b(\mod)$ if and only if $_AB$ is compact.   Hence under the assumption that both $B_A$ and $_A B$ have finite projective dimension, the left part of the above recollement restricts to $\cd^b(\mod)$.


We briefly recall the situation of \cite[Theorem 3.3]{AKL3}. Let $A$ be a finite-dimensional algebra over a field $k$. Fix a tilting module $T$ in $\mod A$ together with a $T$-resolution of $A$ (not necessarily minimal)
$$0 \ra A \ra T_0 \ra T_1 \ra 0 \quad \text{with}\quad T_0,T_1 \in \add(T).$$
Let $\varphi: A \ra B$ be the universal localisation of $A$ at $T_1$, that is, $B = \End_A(T_0/ \text{tr}_{T_1}(T_0))$ and $\varphi$ is a ring epimorphism. Moreover let $C = \End_A(T_1)$.

\begin{thm}(Correction of \cite[Theorem 3.3]{AKL3}) Let $A$, $B$ and $C$ be as above. Under the following conditions
\begin{itemize}
\item[i)] $\varphi: A \ra B$ is a homological epimorphism,

\item[ii)] $\pd(_C T_1) < \infty$, or equivalently $\pd(_A B) <\infty$,

\item[iii)] $\pd(B_A) < \infty$,

\end{itemize}
there is a recollement at $\cd^b(mod)$-level
$$\xymatrix@!=6pc{\cd^b(\mod B) \ar[r]|{i_*=i_!} &\cd^b(\mod A) \ar@<+2.5ex>[l]|{i^!}
\ar@<-2.5ex>[l]|{i^*} \ar[r]|{j^!=j^*} & \cd^b(\mod
C)\ar@<+2.5ex>[l]|{j_*} \ar@<-2.5ex>[l]|{j_!} }$$
where $i^*=-\lten_A B$, $i_*=
\varphi_*$, $i^!=\rhom_A(B,-)$, $j_!=-\lten_C T_1$, and
$j^!=\rhom_A(T_1,-)$.
\end{thm}

\begin{proof} We proceed as in \cite{AKL3} to obtain a recollement at $\cd(\Mod)$-level with the desired functors, using that $\varphi$ is a homological epimorphism. 
By Theorem \ref{t:restricting-recollements-to-dbmod} it restricts to $\cd^b(\mod)$-level if and only if $j_!$ restricts to $\cd^b(\mod)$ and $i_*(B) \in K^b(\proj A)$. The later condition means by Lemma \ref{l:restriction-of-functors-to-compact-obj} that $B_A \in K^b(\proj A)$, that is, $\pd (B_A) < \infty$.
Let us now turn to the condition on $j_!$. By the second part of Proposition \ref{p:ladder-extension} (b) the functor $j_!$ restricts to $\cd^b(\mod)$ if and only if so does $i^*$. By Lemma \ref{l:existence-of-left-adjoint} this is further equivalent to $\pd (_C T_1) < \infty$, and also to $\pd(_A B) < \infty$. Altogether, the conditions i)-iii) therefore ensure the existence of a recollement as stated.
\end{proof}

\bigskip

\section{Derived simplicity}\label{s:derived-simplicity}

In this section, the term ladder refers to a ladder of unbounded derived module
categories. A ladder is \emph{trivial} if one of its three terms is
trivial. We characterise
derived simplicity with respect to different choices of derived categories
in terms of heights of ladders (Theorems~\ref{t:dminus-simple-and-ladder},~\ref{t:kbproj-simplicity-and-ladder} and~\ref{t:dbmod-simplicity-and-ladder}). Using this characterisation, it
is shown by examples that the concept of derived simplicity depends
on the choice of derived categories.

Finally, in Section \ref{ss:comm-ring} we exhibit a large family of derived simple algebras: all indecomposable commutative rings.

\subsection{Restricting recollements along ladders}

Recall that in Example~\ref{e:three-recollements} there is a ladder
of height $3$. The upper recollement of the ladder restricts to
$K^b(\proj)$, the middle recollement restricts to $\cd^b(\Mod)$ and
$\cd^b(\mod)$, and both the upper and the middle recollements
restrict to $\cd^-(\Mod)$. This is a general phenomenon.

Let $k$ be a commutative ring and suppose $A$, $B$ and $C$ are $k$-algebras
forming a recollement
\begin{align*}\label{eq:recollement-unbounded-in-section-derived-simplicity}
\xymatrix@!=6pc{\cd(\Mod B) \ar[r]|{i_*=i_!} &\cd(\Mod A)
\ar@<+2.5ex>[l]|{i^!} \ar@<-2.5ex>[l]|{i^*} \ar[r]|{j^!=j^*} &
\cd(\Mod C).\ar@<+2.5ex>[l]|{j_*} \ar@<-2.5ex>[l]|{j_!}
}\tag{$R$}\end{align*} Recall that the two functors in the upper row always
restrict to $K^b(\proj)$. Following~\cite{Han11}, we say that this
recollement is \emph{perfect} if $i_*(B)\in K^b(\proj A)$ holds.
By Proposition~\ref{p:recollement-restricting-to-d-Mod},
a perfect recollement restricts to $\cd^-(\Mod)$.

A ladder of height $2$
\[\xymatrix@C=4pc{\cd(\Mod B)
\ar@<+.75ex>[r] \ar@<-2.25ex>[r] &\cd(\Mod A) \ar@<.75ex>[l]
\ar@<-2.25ex>[l]
 \ar@<+.75ex>[r] \ar@<-2.25ex>[r] & \cd(\Mod
C),\ar@<.75ex>[l] \ar@<-2.25ex>[l]}
\]
contains two recollements, which we call the upper and the lower
recollement in the obvious sense.

\begin{lemma}\label{l:ladder-of-height-2}
Let $\cl$ be a ladder of height $2$. Then the upper recollement is a
perfect recollement. In particular, it restricts to $\cd^-(\Mod)$.
\end{lemma}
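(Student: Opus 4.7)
The plan is direct: the statement ``$\mathcal L$ has height $2$'' is, by definition, the statement that the upper recollement admits a downward extension, and Proposition~\ref{p:ladder-extension}(a) translates this immediately into the perfectness condition.

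More precisely, I would first unpack the hypothesis: a ladder of height $2$ consists of four rows of functors in which rows $1,2,3$ form the upper recollement and rows $2,3,4$ form the lower one, so the existence of row $4$ is precisely the statement that the upper recollement can be extended one step downwards in the sense of Section~\ref{s:ladder}. Now apply Proposition~\ref{p:ladder-extension}(a): such a downward extension exists if and only if $i_*$ (equivalently $j^!$) of the upper recollement restricts to $K^b(\proj)$. By Lemma~\ref{l:restriction-of-functors-to-compact-obj} applied to $F=i_*$, this in turn is equivalent to $i_*(B)\in K^b(\proj A)$, which is exactly the definition of perfectness of the upper recollement.

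The ``in particular'' assertion then follows at once from Proposition~\ref{p:recollement-restricting-to-d-Mod}, where the condition $i_*(B)\in K^b(\Proj A)$ is shown to be equivalent to the recollement restricting to $\cd^-(\Mod)$. I expect no real obstacle here, since the entire argument is a one-step invocation of already established results; the only conceptual point to verify is that the notion of ``extending a recollement one step downwards'' used in Proposition~\ref{p:ladder-extension} coincides with the row structure of a ladder, and this is clear from the ladder diagram displayed at the beginning of Section~\ref{s:ladder}.
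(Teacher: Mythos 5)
Your proof is correct and follows essentially the same route as the paper's: the key point in both is that the middle row $(i_*, j^!)$ of the upper recollement is the top row of the lower recollement, which forces $i_*$ to restrict to $K^b(\proj)$ and hence $i_*(B)\in K^b(\proj A)$. The paper cites Lemma~\ref{l:useful-info}(e) directly for this, whereas you route it through Proposition~\ref{p:ladder-extension}(a) and Lemma~\ref{l:restriction-of-functors-to-compact-obj}; both are valid one-step invocations of previously established results, and the ``in particular'' clause is handled identically via Proposition~\ref{p:recollement-restricting-to-d-Mod}.
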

\begin{proof} The middle row of the upper
recollement is the upper row of the lower recollement, and hence
both functors in this row restrict to $K^b(\proj)$.
\end{proof}

A ladder of height $3$
\[\xymatrix@!=7pc{\cd(\Mod B)
\ar@<+1.5ex>[r] \ar@<-1.5ex>[r] &\cd(\Mod A) \ar[l] \ar@<-3.0ex>[l]
\ar@<+3.0ex>[l] \ar@<+1.5ex>[r] \ar@<-1.5ex>[r] & \cd(\Mod C),\ar[l]
\ar@<-3.0ex>[l] \ar@<+3.0ex>[l]}
\]
contains three recollements, which we call the upper, the middle and
the lower recollement in the obvious sense.

\begin{proposition}\label{p:ladder-of-height-3}
Let $\cl$ be a ladder of height $3$. Then the upper recollement of
$\cl$ restricts to $K^b(\proj)$ and the middle recollement of $\cl$
restricts to $\cd^b(\Mod)$ and $\cd_{fl}$.
\end{proposition}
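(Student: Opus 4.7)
The plan is to apply the restriction criteria of Section~\ref{s:lifting-and-restricting-recollements} separately to the upper and middle recollements, with the extra adjoints guaranteed by the ladder supplying the missing compactness data. Label the three recollements $R_1, R_2, R_3$ of $\cl$ (from top to bottom), with outer algebras $B_i, C_i$ for $R_i$, and let $(\cc_{-2}, \cc_{-1}, \cc_0, \cc_1, \cc_2)$ be the associated TTF tuple; in particular $\cc_0=\Img(j_*^{R_1})=\Img(i_*^{R_2})$ sits inside $\cd(\Mod A)$ as both the right perpendicular term of $R_1$ and the middle term of $R_2$.

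For part (a), by Theorem~\ref{t:recollement-restricting-to-kbproj} it is enough to show that $i_*^{R_1}(B_1)\in K^b(\proj A)$ and $i^{!R_1}(A)\in K^b(\proj B_1)$. Since the ladder extends both $R_1$ and $R_2$ one step downwards, Proposition~\ref{p:ladder-extension}(a) yields $i_*^{R_1}(B_1)\in K^b(\proj A)$ and $i_*^{R_2}(B_2)\in K^b(\proj A)$. By Lemma~\ref{l:restriction-to-compact-for-row}(b) it remains to prove $j_*^{R_1}(C_1)\in K^b(\proj A)$, and I would argue this by transporting compactness through the equivalence $j_*^{R_1}\colon\cd(\Mod C_1)\xrightarrow{\sim}\cc_0$: from $i_*^{R_1}(B_1)\in K^b(\proj A)$ and Lemma~\ref{l:restriction-to-compact-for-row}(a) the functor $j^{*R_1}$ preserves compactness, so its right adjoint $j_*^{R_1}$ commutes with coproducts, and hence $j_*^{R_1}(C_1)$ is compact in $\cc_0$. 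Since $i_*^{R_2}$ is a coproduct-preserving equivalence $\cd(\Mod B_2)\xrightarrow{\sim}\cc_0$, one has $\cc_0=\Tria(i_*^{R_2}(B_2))$, and its compact objects form $\thick(i_*^{R_2}(B_2))\subseteq K^b(\proj A)$. Therefore $j_*^{R_1}(C_1)\in K^b(\proj A)$, closing (a).

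For part (b), Proposition~\ref{p:recollement-restricting-to-dbMod} reduces the restriction of $R_2$ to $\cd^b(\Mod)$ to the two conditions $i_*^{R_2}(B_2)\in K^b(\Proj A)$ (already at hand) and $j_!^{R_2}$ restricting to $\cd^b(\Mod)$. Since the ladder extends $R_2$ upwards, Proposition~\ref{p:ladder-extension}(b) supplies a left adjoint $F$ of $j_!^{R_2}$. The key observation is that $j_!^{R_2}$ and $i_*^{R_1}$ are two fully faithful functors into $\cd(\Mod A)$ with the common essential image $\cc_{-1}$, so their reflections (and hence left adjoints) coincide up to the derived equivalence $\cd(\Mod C_2)\simeq\cd(\Mod B_1)$ identifying the two outer categories; this identifies $F$ with $i^{*R_1}$, which restricts to $K^b(\proj)$ by Lemma~\ref{l:useful-info}(e). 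By Lemma~\ref{l:restriction-of-right-adjoint}, $j_!^{R_2}$ then restricts to both $\cd^b(\Mod)$ and $\cd_{fl}$. For the restriction of $R_2$ to $\cd_{fl}$, I would verify each of the six functors individually: the middle row restricts by Lemma~\ref{l:useful-info}(e); the right adjoints $i^{!R_2}$ and $j_*^{R_2}$ restrict via Lemma~\ref{l:restriction-of-right-adjoint}, since $i_*^{R_2}$ restricts to $K^b(\proj)$ and so does $j^{*R_2}$ by Lemma~\ref{l:restriction-to-compact-for-row}(a); and $i^{*R_2}$ is handled by the same left-adjoint argument, its own left adjoint being identified with $j_!^{R_1}$.

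The main obstacle will be the ladder bookkeeping: namely, articulating cleanly that the abstract left adjoint of $j_!^{R_2}$ coincides, through the ``rotation'' of outer algebras $B_2\simeq C_1$ and $C_2\simeq B_1$, with the always-restricting functor $i^{*R_1}$ from the recollement just above, and performing the analogous identification for the left adjoint of $i^{*R_2}$ with $j_!^{R_1}$. Once these identifications are set down precisely, everything else reduces to a direct application of the restriction criteria established in Section~\ref{s:lifting-and-restricting-recollements}.
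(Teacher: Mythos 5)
Your proof is correct, but it takes a noticeably longer road than the paper's, which consists of just two observations. First, each of the three rows of the upper recollement $R_1$ is the \emph{top} row of one of the three recollements $R_1,R_2,R_3$ in the ladder, and top-row functors always restrict to $K^b(\proj)$ by Lemma~\ref{l:useful-info}(e); hence all six functors of $R_1$ restrict to $K^b(\proj)$ and part (a) is immediate, with no appeal to Theorem~\ref{t:recollement-restricting-to-kbproj}, Lemma~\ref{l:restriction-to-compact-for-row}, or your transport of compactness through $\cc_0$ (which is fine, but needs the standard facts that the compacts of $\cc_0\simeq\cd(\Mod B_2)$ are $\thick(i_*^{R_2}(B_2))$ and that the equivalence $\cd(\Mod C_1)\simeq\cd(\Mod B_2)$ preserves compacts). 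In particular, the shortcut you are missing in (a) is that $i^{!R_1}$ and $j_*^{R_1}$ form the top row of the \emph{lower} recollement $R_3$, so $i^{!R_1}(A)\in K^b(\proj B_1)$ and $j_*^{R_1}(C_1)\in K^b(\proj A)$ come for free. Second, the six functors of the middle recollement $R_2$ are exactly the right adjoints of the six functors of $R_1$, so a single application of Lemma~\ref{l:restriction-of-right-adjoint} gives restriction of all of them to $\cd^b(\Mod)$ and $\cd_{fl}$ at once. Your part (b) rediscovers this functor by functor: the identifications of the left adjoints of $j_!^{R_2}$ and $i^{*R_2}$ with $i^{*R_1}$ and $j_!^{R_1}$ (through the equivalences $\cd(\Mod C_2)\simeq\cd(\Mod B_1)$ and $\cd(\Mod B_2)\simeq\cd(\Mod C_1)$ coming from $\Img(j_!^{R_2})=\cc_{-1}=\Img(i_*^{R_1})$, etc.) are precisely the statement that consecutive recollements of a ladder share two rows, which the paper exploits directly. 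So the underlying mechanism is the same; your version is sound but trades the row-level bookkeeping for object-level criteria, at the cost of extra machinery.
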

\begin{proof}
The three rows of the upper recollement are respectively the upper
row of the upper, the middle and the lower recollement of $\cl$, and
hence all six functors in these three rows restrict to
$K^b(\proj)$. So, the upper recollement restricts to
$K^b(\proj)$. Being the right adjoints of these six functors, the
functors in the middle recollement restrict to $\cd^b(\Mod)$ and
$\cd_{fl}$, by Lemma~\ref{l:restriction-of-right-adjoint}. This
shows that the middle recollement of $\cl$ restricts to
$\cd^b(\Mod)$ and $\cd_{fl}$.
\end{proof}

\subsection{$\cd(\Mod)$-simplicity}
Being $\cd(\Mod)$-simple is the strongest property among all
versions of derived simplicity.

\begin{proposition}\label{p:unbounded-simplicity-is-strongest}
Let $A$ be $\cd(\Mod)$-simple. Then $A$ is derived simple with
respect to any of $\cd^-(\Mod)$, $\cd^b(\Mod)$, $\cd_{fl}$ or
$K^b(\proj)$.
\end{proposition}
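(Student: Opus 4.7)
The plan is to argue by contrapositive: assume that $A$ admits a non-trivial recollement at one of the levels $\cd^-(\Mod)$, $\cd^b(\Mod)$, $\cd_{fl}$ or $K^b(\proj)$, and produce from it a non-trivial recollement at $\cd(\Mod)$-level, contradicting $\cd(\Mod)$-simplicity of $A$.

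First, suppose we are given a non-trivial recollement of $\cd_A$ by $\cd_B$ and $\cd_C$, where $\cd$ denotes one of the four levels in question. By Proposition~\ref{p:lifting-recollement}(a), this recollement lifts to a recollement of $\cd(\Mod A)$ by $\cd(\Mod B)$ and $\cd(\Mod C)$. The crucial point is that the outer algebras $B$ and $C$ are the same on both levels, so the lifted recollement has non-trivial outer terms as soon as neither $B$ nor $C$ is the zero ring.

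Next, I have to verify that non-triviality is preserved by lifting. Non-triviality of the original recollement means that neither of the outer categories $\cd_B$, $\cd_C$ is the trivial (zero) category; since for any non-zero $k$-algebra $R$ all the categories $K^b(\proj R)$, $\cd_{fl}(R)$, $\cd^b(\Mod R)$, $\cd^-(\Mod R)$ and $\cd(\Mod R)$ are non-trivial (e.g. the free module $R_R$ is a non-zero object in each of them), this is equivalent to $B\neq 0\neq C$. Hence in the lifted recollement $\cd(\Mod B)$ and $\cd(\Mod C)$ are also non-trivial, so the lift is a non-trivial $\cd(\Mod)$-recollement of $\cd(\Mod A)$. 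This contradicts the hypothesis that $A$ is $\cd(\Mod)$-simple.

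No step here should present serious difficulty; the only thing to keep in mind is the mild subtlety above, namely that non-triviality of a recollement is equivalent to non-vanishing of the outer algebras, which is independent of the chosen level of derived category.
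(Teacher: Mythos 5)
Your proposal is correct and follows essentially the same route as the paper: argue by contrapositive and lift the non-trivial recollement to the $\cd(\Mod)$-level via Proposition~\ref{p:lifting-recollement}. The paper leaves the preservation of non-triviality implicit, whereas you spell out that non-triviality amounts to $B\neq 0\neq C$ independently of the level; this is a harmless (and accurate) addition.
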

\begin{proof}
Let $\cd=\cd^-(\Mod),\cd^b(\Mod),\cd_{fl}$ or $K^b(\proj)$. Suppose
that $A$ is not $\cd$-simple. Then there is a non-trivial
recollement of $A$ on the level $\cd$. By
Proposition~\ref{p:lifting-recollement}, there is a non-trivial
recollement of $A$ on the level of $\cd(\Mod)$, contradicting the
assumption.
\end{proof}

In general the concepts of derived simplicity with respect to $\cd(\Mod)$,
$\cd^-(\Mod)$, $\cd^b(\Mod)$, $\cd_{fl}$ and $K^b(\proj)$ are
different. However, for algebras of finite global dimension, some of
them coincide, as a consequence of
Proposition~\ref{p:restricting-for-algebra-of-finite-global-dimension}:

\begin{cor} Let $A$ be a $k$-algebra of finite global dimension. Then
$A$ is $\cd(\Mod)$-simple if and only if it is $\cd^-(\Mod)$-simple
if and only if it is $\cd^b(\Mod)$-simple. If in addition $k$ is a
field and $A$ is finite-dimensional over $k$, then the derived
simplicity of $A$ does not depend on the choice of the derived
category.
\end{cor}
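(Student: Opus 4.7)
The plan is to handle the two directions separately. One implication is already available from Proposition~\ref{p:unbounded-simplicity-is-strongest}, which shows that $\cd(\Mod)$-simplicity is the strongest notion of derived simplicity considered here (since any non-trivial recollement at a lower level lifts to a non-trivial $\cd(\Mod)$-recollement via Proposition~\ref{p:lifting-recollement}). So it suffices to establish the reverse implications under the finite global dimension hypothesis.

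For the converse, I would start with a non-trivial $\cd(\Mod)$-recollement of $\cd(\Mod A)$ by $\cd(\Mod B)$ and $\cd(\Mod C)$ (so $B$ and $C$ are non-zero $k$-algebras) and invoke Proposition~\ref{p:restricting-for-algebra-of-finite-global-dimension}: since $A$ has finite global dimension, this recollement restricts to $\cd^-(\Mod)$ and to $\cd^b(\Mod)$. The restricted recollements automatically remain non-trivial, because their outer terms $\cd^-(\Mod B), \cd^-(\Mod C)$ and $\cd^b(\Mod B), \cd^b(\Mod C)$ are all non-trivial as soon as $B$ and $C$ are non-zero. This yields that $A$ is neither $\cd^-(\Mod)$-simple nor $\cd^b(\Mod)$-simple, completing the first part of the statement.

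In the finite-dimensional case, Proposition~\ref{p:restricting-for-algebra-of-finite-global-dimension} further provides a restriction to $\cd^b(\mod)$, which coincides with $\cd_{fl}$ in this setting, and the triviality argument is the same. To handle $K^b(\proj)$-simplicity, I would combine two observations: by Proposition~\ref{p:finiteness-of-global-dimension} the outer algebras $B$ and $C$ also have finite global dimension, and for a finite-dimensional algebra of finite global dimension one has $K^b(\proj)=\cd^b(\mod)$. Hence the $\cd^b(\mod)$-recollement obtained above is at the same time a $K^b(\proj)$-recollement, which yields $K^b(\proj)$-non-simplicity of $A$.

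I do not anticipate any serious obstacle: the proof is essentially an application of the restriction theorem under finite global dimension together with the observation that non-triviality of the outer categories is preserved under restriction. The only point meriting brief verification is the identification $K^b(\proj)=\cd^b(\mod)$ for finite-dimensional algebras of finite global dimension, which is standard.
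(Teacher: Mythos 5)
Your proposal is correct and follows exactly the route the paper intends: the forward implications come from Proposition~\ref{p:unbounded-simplicity-is-strongest}, and the converses from Proposition~\ref{p:restricting-for-algebra-of-finite-global-dimension}, with the $K^b(\proj)$ case handled via Proposition~\ref{p:finiteness-of-global-dimension} and the identification $K^b(\proj)=\cd^b(\mod)$ for finite-dimensional algebras of finite global dimension. The paper states the corollary as an immediate consequence of these results, so your write-up simply supplies the (correct) details.
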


In Section~\ref{ss:comm-ring} we will provide new
examples of $\cd(\Mod)$-simple algebras.

\subsection{$\cd^-(\Mod)$-simplicity}  This means that all ladders have height at most one:

\begin{thm}\label{t:dminus-simple-and-ladder} Let $A$ be a
$k$-algebra. Consider the following conditions: \begin{itemize}
\item[(i)] $A$ is $\cd^-(\Mod)$-simple,
\item[(ii)] there are no non-trivial perfect recollements of the form~(\ref{eq:recollement-unbounded-in-section-derived-simplicity}),
\item[(iii)] every non-trivial ladder of $\cd(\Mod A)$ has height
$\leq 1$.
\end{itemize}
Then (i)$\Rightarrow$(ii)$\Leftrightarrow$(iii). If $k$ is a field
and $A$ is finite-dimensional over $k$, then all three conditions
are equivalent.
\end{thm}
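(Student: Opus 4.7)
The plan is to prove (i)$\Rightarrow$(ii), then (ii)$\Leftrightarrow$(iii), and finally the converse (ii)$\Rightarrow$(i) under the finite-dimensional hypothesis. The key tools are Proposition~\ref{p:recollement-restricting-to-d-Mod} (which characterises when a $\cd(\Mod)$-recollement restricts to $\cd^-(\Mod)$), Proposition~\ref{p:lifting-recollement} (lifting), Proposition~\ref{p:ladder-extension}(a) (downward extension of ladders), and Lemma~\ref{l:ladder-of-height-2} (perfectness of the upper recollement in a ladder of height $\geq 2$).

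For (i)$\Rightarrow$(ii), I would argue by contraposition. Given a non-trivial perfect recollement (\ref{eq:recollement-unbounded-in-section-derived-simplicity}), the perfectness assumption $i_*(B)\in K^b(\proj A)\subseteq K^b(\Proj A)$ together with Proposition~\ref{p:recollement-restricting-to-d-Mod} produces a restriction to a $\cd^-(\Mod)$-recollement with outer terms $\cd^-(\Mod B)$ and $\cd^-(\Mod C)$; since $B,C\neq 0$, this restriction is non-trivial, contradicting $\cd^-(\Mod)$-simplicity.

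For (ii)$\Leftrightarrow$(iii), both directions are short. Assuming (iii) fails, take a non-trivial ladder of height $\geq 2$; its upper recollement is non-trivial and, by Lemma~\ref{l:ladder-of-height-2}, is perfect, contradicting (ii). Conversely, if (ii) fails, a non-trivial perfect recollement satisfies $i_*(B)\in K^b(\proj A)$; by Lemma~\ref{l:restriction-of-functors-to-compact-obj}, $i_*$ restricts to $K^b(\proj)$, and then Proposition~\ref{p:ladder-extension}(a) extends the recollement one step downwards, producing a non-trivial ladder of height $2$, contradicting (iii).

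The main (and subtlest) step is (ii)$\Rightarrow$(i) under the finite-dimensional hypothesis, which I would also do by contraposition. A non-trivial recollement of $\cd^-(\Mod A)$ lifts, by Proposition~\ref{p:lifting-recollement}(a), to a non-trivial recollement of $\cd(\Mod A)$; by Proposition~\ref{p:lifting-recollement}(b) the lift restricts (up to equivalence) back to the given $\cd^-(\Mod)$-recollement. Invoking the equivalence (i)$\Leftrightarrow$(ii) of Proposition~\ref{p:recollement-restricting-to-d-Mod} gives $i_*(B)\in K^b(\Proj A)$, and here the assumption that $k$ is a field and $A$ is finite-dimensional is used precisely to upgrade this to $i_*(B)\in K^b(\proj A)$ via condition (iii) of that proposition. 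Thus the lifted recollement is perfect and non-trivial, contradicting (ii). The delicate point to watch is exactly this upgrade from $\Proj$ to $\proj$, which is where the finite-dimensionality hypothesis is essential and explains why the equivalence of all three conditions is restricted to that setting.
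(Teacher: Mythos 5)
Your proposal is correct and follows essentially the same route as the paper's own proof: contraposition for (i)$\Rightarrow$(ii) via Proposition~\ref{p:recollement-restricting-to-d-Mod}, Lemma~\ref{l:ladder-of-height-2} and Proposition~\ref{p:ladder-extension}(a) for (ii)$\Leftrightarrow$(iii), and lifting plus the restriction criterion for (ii)$\Rightarrow$(i). Your explicit remark that the finite-dimensionality is needed precisely to pass from $K^b(\Proj A)$ to $K^b(\proj A)$ is exactly the point the paper relies on.
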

\begin{proof}
(i)$\Rightarrow$(ii) Suppose that $A$ has a recollement of the form
(\ref{eq:recollement-unbounded-in-section-derived-simplicity}) with
$i_*(B)\in K^b(\proj A)$. By
Proposition~\ref{p:recollement-restricting-to-d-Mod}, this
recollement restricts to $\cd^-(\Mod)$, yielding a non-trivial
recollement on the $\cd^-(\Mod)$-level and implying that $A$ is not
$\cd^-(\Mod)$-simple.

(ii)$\Rightarrow$(iii) Suppose that $\cl$ is a non-trivial ladder of
$\cd(\Mod A)$ of height $2$. By Lemma~\ref{l:ladder-of-height-2},
the upper recollement of $\cl$ is a non-trivial perfect recollement.

(iii)$\Rightarrow$(ii) Suppose
there is a non-trivial recollement of the form
(\ref{eq:recollement-unbounded-in-section-derived-simplicity}) with
$i_*(B)\in K^b(\proj A)$. By Proposition~\ref{p:ladder-extension}(a), this recollement, viewed as a ladder of height $1$, can be
extended downwards by one step, yielding a non-trivial ladder of
height $2$.

(ii)$\Rightarrow$(i) Let $k$ be a field and $A$ be a
finite-dimensional $k$-algebra. Suppose that $A$ is not
$\cd^-(\Mod)$-simple, i.e., there is a non-trivial recollement of
$A$ on the $\cd^-(\Mod)$-level. By
Proposition~\ref{p:lifting-recollement}, there is a non-trivial
recollement of the
form~(\ref{eq:recollement-unbounded-in-section-derived-simplicity})
which restricts to $\cd^-(\Mod)$. By
Proposition~\ref{p:recollement-restricting-to-d-Mod}, $i_*(B)\in
K^b(\proj A)$, i.e. this recollement is a non-trivial perfect
recollement.
\end{proof}

For general algebras, it is not the case that any recollement of the
form (\ref{eq:recollement-unbounded-in-section-derived-simplicity})
which can be restricted to $\cd^-(\Mod)$ is part of a ladder of height
$2$, as shown in the next example.

\begin{ex}\emph{(Example \ref{ex:large-tilting} continued.) }\label{ex:large-tilting-continued}{\rm Since $j_!(k)=eA\not\in \cd_{fl}(A)$,
it follows from Lemma~\ref{l:restriction-of-right-adjoint} and Lemma~\ref{l:useful-info}(e) that
$j_!$ does not admit a left adjoint, and hence the recollement
cannot be extended upwards
(Proposition~\ref{p:ladder-extension}(b)). Moreover, $j^*(A)\not\in
K^b(\proj k)$, equivalently, $i_*(A/AeA)\not\in K^b(\proj A)$,
implying that the recollement cannot be extended downwards
(Proposition~\ref{p:ladder-extension}(a)). So the recollement is a
complete ladder of height $1$. However, $AeA$ is projective, so it
follows by the short exact sequence \[\xymatrix{0\ar[r]
&AeA\ar[r]&A\ar[r]&A/AeA\ar[r] &0}\] that $i_*(A/AeA)\in K^b(\Proj
A)$, which implies that the recollement restricts to $\cd^-(\Mod)$
(Proposition~\ref{p:recollement-restricting-to-d-Mod}). In fact, the recollement restricts further to $\cd^b(\Mod)$ (Corollary~\ref{c:restricting-to-dbMod-when-C-has-finite-global-dimension}).}\end{ex}

It is proved in~\cite{LY} that finite-dimensional symmetric algebras
over a field do not admit any non-trivial perfect recollement, see~\cite[Remark 4.3]{LY}. Hence:

\begin{thm}
Let $k$ be a field and $A$ be a connected (i.e. indecomposable as an algebra) finite-dimensional symmetric
$k$-algebra. Then $A$ is $\cd^-(\Mod)$-simple.
\end{thm}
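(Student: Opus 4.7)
The plan is to combine the characterisation of $\cd^-(\Mod)$-simplicity from Theorem~\ref{t:dminus-simple-and-ladder} with the external input from \cite{LY} that is cited in the paragraph immediately preceding the theorem. Since $k$ is a field and $A$ is finite-dimensional, the last part of Theorem~\ref{t:dminus-simple-and-ladder} applies, so the implication (ii)$\Rightarrow$(i) is available. Therefore it suffices to show that $\cd(\Mod A)$ admits no non-trivial perfect recollement of the form (\ref{eq:recollement-unbounded-in-section-derived-simplicity}), i.e.\ no such recollement with $i_*(B)\in K^b(\proj A)$.

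The non-existence of such a perfect recollement is exactly the content of \cite[Remark 4.3]{LY} for indecomposable finite-dimensional symmetric algebras. Thus the body of the proof reduces to a single line: quote \cite[Remark 4.3]{LY} to rule out condition (ii) of Theorem~\ref{t:dminus-simple-and-ladder}, and conclude via that theorem that $A$ is $\cd^-(\Mod)$-simple.

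It is worth flagging why the \emph{connected} hypothesis is indispensable. If $A$ decomposed non-trivially as a product $A\cong A_1\times A_2$, the corresponding block idempotents would produce a recollement of $\cd(\Mod A)$ by $\cd(\Mod A_1)$ and $\cd(\Mod A_2)$ with $i_*$ identifying $\cd(\Mod A_1)$ as a direct summand; in particular $i_*(A_1)$ would be a projective $A$-module and hence in $K^b(\proj A)$, so this recollement would be perfect. Via Lemma~\ref{l:commutating-direct-sum-decomposition-and-recollements} any perfect recollement of $\cd(\Mod A)$ in fact respects such a product decomposition. Indecomposability of $A$ is therefore exactly what is needed to make the cited result applicable.

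The conceptual obstacle, which is handled in \cite{LY} rather than here, is showing that for an indecomposable finite-dimensional symmetric algebra, any perfect recollement is trivial. The mechanism there is that for a symmetric algebra the Nakayama functor is isomorphic to the identity on $K^b(\proj A)$, which forces the compactness of $i_*(B)$ to be self-dual; combined with exceptionality of the generating objects $T=j_!(C)$ and $T'=i_*(B)$ in the associated TTF triple (Lemma~\ref{l:useful-info}) and the indecomposability of $A$, this leaves no room for a non-trivial perfect recollement. In the present paper, however, all of this is encapsulated in the citation and no further calculation is required.
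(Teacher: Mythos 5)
Your proof is correct and follows exactly the route the paper takes: the theorem is deduced by citing \cite[Remark 4.3]{LY} to rule out non-trivial perfect recollements and then invoking the implication (ii)$\Rightarrow$(i) of Theorem~\ref{t:dminus-simple-and-ladder}, which is valid since $A$ is finite-dimensional over a field. The additional remarks on the role of connectedness and on the internal mechanism of \cite{LY} are reasonable commentary but not needed for the argument.
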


Below we construct a $\cd^-(\Mod)$-simple finite-dimensional
algebra which is not $\cd(\Mod)$-simple.

\begin{ex}\label{ex:Dminussimple-neq-Dsimple}{\rm Let $k$ be a field and let
$A$ be the $k$-algebra given by the quiver with relations
\[\xymatrix{1\ar@(ul,dl)_{\alpha}\ar@<.7ex>[r]^{\gamma} & 2\ar@<.7ex>[l]^{\beta}\ar@(ur,dr)^{\delta}},~~\beta\gamma\beta,\alpha^2,\gamma\alpha,\delta^2,\delta\gamma.\]
This algebra is 14-dimensional and the composition series of the two indecomposable projectives $P_1$ and $P_2$ are depicted as follows
\[\begin{smallmatrix} &&& 1 &&&\\[3pt] & 1 &&&& 2 &\\[3pt] & 2 &&& 1 && 2\\[3pt] 1 & & 2 &&&&\end{smallmatrix},\qquad
\begin{smallmatrix} &&& 2 &&\\[3pt] & 1 &&&& 2\\[3pt] &2 &&&&\\[3pt] 1 & & 2&&&\end{smallmatrix}~~.\]
We claim that, up to shift and isomorphism, $P_1$ and $P_2$ are the only indecomposable compact exceptional objects in $\cd(\Mod A)$.
Indeed,
let $X$ be an indecomposable
exceptional object of $K^b(\proj A)$, minimal of the form
\[\xymatrix{P_1^{\oplus m_a}\oplus P_2^{\oplus n_a}\ar[r]& P_1^{\oplus m_{a+1}}\oplus P_2^{\oplus n_{a+1}}\ar[r] &\hdots\ar[r]&P_1^{\oplus m_b}\oplus P_2^{\oplus n_b}},\]
where one of $m_a$ and $n_a$ is nonzero and one of $m_b$ and $n_b$ is nonzero.
Assume that $m_a\neq 0$ and $n_b\neq 0$: the other cases can be treated similarly. Consider the morphism $f:P_2\stackrel{\alpha\beta\delta}{\longrightarrow}P_1$, which maps the top of $P_2$ to the last radical layer of $P_1$. The map  $g:P_2^{\oplus n_b}\rightarrow P_1^{\oplus m_a}$ which, in matrix form, has all entries $f$, induces a self-extension of $X$ in degree $b-a$:
\[\xymatrix{\hdots\ar[r]&P_1^{\oplus m_{b-1}}\oplus P_2^{\oplus n_{b-1}}\ar[r]&P_1^{\oplus m_b}\oplus P_2^{\oplus n_b}\ar[d]^{\left(\begin{smallmatrix} 0 & g\\ 0 & 0\end{smallmatrix}\right)}\\
&&P_1^{\oplus m_a}\oplus P_2^{\oplus n_a}\ar[r]& P_1^{\oplus m_{a+1}}\oplus P_2^{\oplus n_{a+1}}\ar[r] &\hdots}.\]
The object $X$ being exceptional implies that $a=b$, { and further, either $m_a=0$ and $n_a=1$, or vice versa.}

 Next we show that each of $P_1$ and $P_2$ generates a recollement.
Consider the case for $P_1=e_1A$. As a right $A$-module the quotient $A/Ae_1A$ admits the following projective resolution
\[\xymatrix@C=1.5pc{\ldots\ar[r] & P_1\ar[r]^\alpha&P_1\ar[r]^\alpha&P_1\ar[r]^\gamma&P_2\ar[r] & A/Ae_1A\ar[r] & 0}.\]
Thus by Lemma~\ref{l:criterion-stratifying-ideal}, $Ae_1A$ is a stratifying ideal, and hence $P_1$ generates a  recollement of $\cd(\Mod A)$ by $\cd(\Mod A/Ae_1A)\cong\cd(\Mod k[x]/(x^2))$ and
$\cd(\Mod e_1Ae_1)\cong \cd(\Mod k\langle x,y\rangle/(x^2,y^2,xy))$ ($e_1Ae_1=k\{e_1,\beta\gamma,\alpha,\alpha\beta\gamma\}$).

The case for $P_2$ is similar: it generates a  recollement of $\cd(\Mod A)$ by $\cd(\Mod A/Ae_2A)\cong\cd(\Mod k[x]/(x^2))$ and $\cd(\Mod e_2Ae_2)\cong
\cd(\Mod k\langle x,y\rangle/(x^2,y^2,xy))$ ($e_2Ae_2=k\{e_2,\gamma\beta,\delta,\gamma\beta\delta\}$).

In particular, $A$ is not $\cd(\Mod)$-simple.
However, the two recollements are not in the same ladder, and hence both recollements are already complete ladders.  So all non-trivial ladders of $\cd(\Mod A)$ have height $1$. By Theorem~\ref{t:dminus-simple-and-ladder}, $A$ is $\cd^-(\Mod)$-simple.
}\end{ex}

\subsection{$K^b(\proj)$-simplicity} This means that all ladders have height at most two:

\begin{thm}\label{t:kbproj-simplicity-and-ladder}
 Let $A$ be a $k$-algebra. The following are equivalent:
\begin{itemize}
 \item[(i)] $A$ is $K^b(\proj)$-simple,
 \item[(ii)] all non-trivial ladders of $\cd(\Mod A)$ have height $\leq 2$.
\end{itemize}
\end{thm}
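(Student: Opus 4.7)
The plan is to establish both implications by contraposition, linking the ability to restrict a recollement to $K^b(\proj)$ with the ability to extend it into a longer ladder.

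For (i) $\Rightarrow$ (ii), I would assume that $\cd(\Mod A)$ admits a non-trivial ladder of height at least $3$, whose outer terms (by definition of ladder of derived categories) have the form $\cd(\Mod B)$ and $\cd(\Mod C)$ for non-zero $k$-algebras $B$ and $C$. Choosing any height-$3$ subladder and applying Proposition \ref{p:ladder-of-height-3}, its upper recollement restricts to $K^b(\proj)$, yielding a non-trivial recollement of $K^b(\proj A)$ by $K^b(\proj B)$ and $K^b(\proj C)$. Hence $A$ is not $K^b(\proj)$-simple.

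For (ii) $\Rightarrow$ (i), I would assume that $A$ is not $K^b(\proj)$-simple and fix a non-trivial $K^b(\proj)$-recollement of $A$. By Proposition \ref{p:lifting-recollement}, this lifts to a non-trivial $\cd(\Mod)$-recollement $(R)$ of $\cd(\Mod A)$ by $\cd(\Mod B)$ and $\cd(\Mod C)$ which restricts back to the given $K^b(\proj)$-recollement. Theorem \ref{t:recollement-restricting-to-kbproj} then forces all six functors of $(R)$ to restrict to $K^b(\proj)$. Since the middle-row functors $i_*$ and $j^*$ of $(R)$ restrict to $K^b(\proj)$, Proposition \ref{p:ladder-extension}(a) extends $(R)$ one step downwards, producing a ladder of height $2$ whose new lower recollement $(R')$ has middle row given by the bottom row of $(R)$, namely $i^!$ and $j_*$. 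These likewise restrict to $K^b(\proj)$, so a second application of Proposition \ref{p:ladder-extension}(a) to $(R')$ extends the ladder downwards once more, producing a non-trivial ladder of height $3$, contradicting (ii).

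The main obstacle is the bookkeeping in the second implication: when $(R)$ is extended downwards into a height-$2$ ladder, the newly appearing recollement $(R')$ reverses the roles of its outer categories, and its middle-row embeddings are the \emph{right adjoints} $i^!$ and $j_*$ of the original recollement $(R)$. The crucial point is that the hypothesis supplied by Theorem \ref{t:recollement-restricting-to-kbproj}---namely, that all six functors of $(R)$ restrict to $K^b(\proj)$---is precisely what is needed to feed Proposition \ref{p:ladder-extension}(a) a second time and thereby extend the ladder from height $2$ to height $3$.
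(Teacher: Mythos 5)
Your proposal is correct and follows essentially the same route as the paper: (i)$\Rightarrow$(ii) via Proposition \ref{p:ladder-of-height-3} applied to a height-$3$ subladder, and (ii)$\Rightarrow$(i) by lifting the $K^b(\proj)$-recollement with Proposition \ref{p:lifting-recollement} and then extending twice downwards with Proposition \ref{p:ladder-extension}(a). The only difference is that you spell out the bookkeeping for the second downward extension (that the middle row of the new recollement consists of $i^!$ and $j_*$, which restrict to $K^b(\proj)$ by Theorem \ref{t:recollement-restricting-to-kbproj}), a step the paper leaves implicit; this is a correct and welcome elaboration, not a different argument.
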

\begin{proof}
 (i)$\Rightarrow$(ii) Let $\cl$ be a non-trivial ladder of $\cd(\Mod A)$ of height $3$.
Then by Proposition~\ref{p:ladder-of-height-3}, the upper recollement of $\cl$ restricts to a non-trivial
recollement on the level of $K^b(\proj)$. Thus $A$ is not $K^b(\proj)$-simple.

(ii)$\Rightarrow$(i) Suppose that $A$ is not $K^b(\proj)$-simple. Then there is a non-trivial recollement
of $A$ on the $K^b(\proj)$-level. By Proposition~\ref{p:lifting-recollement}, there is a non-trivial recollement
of the form (\ref{eq:recollement-unbounded-in-section-derived-simplicity}) which restricts to $K^b(\proj)$.
Extending twice downwards by Proposition~\ref{p:ladder-extension}(a), we obtain a non-trivial ladder of height $3$.
\end{proof}

 Next we are going to provide an example of a
finite-dimensional algebra which is $K^b(\proj)$-simple but not
$\cd^-(\Mod)$-simple. This will be done by showing that there is only one non-trivial ladder, which is of height $2$. In \cite{LiLiping13} Liping Li gives a class of $K^b(\proj)$-simple algebras, which contains our example.

\begin{ex}\label{e:three-recollements-and-derived-simple} {\rm
Let $k$ be a field and let $A$ be the radical square zero $k$-algebra whose quiver is
\[\xymatrix{1\ar@(ul,dl)_{\gamma} & 2\ar[l]_{\alpha}\ar@(ur,dr)^{\beta}}\]
with indecomposable projective modules $P_1=\begin{smallmatrix} & 1 &\\ 1
& & 2\end{smallmatrix}$ and $P_2=\begin{smallmatrix} 2 \\2 \end{smallmatrix}\ $.
As in Example~\ref{e:three-recollements}, it can be checked that $P_1$, $P_2$, and
$X:=\mathrm{Cone}(P_2\stackrel{\alpha}{\rightarrow}P_1)$ are the only
indecomposable exceptional compact objects, up to shift and up to
isomorphism. We will show that $X$ does not generate a recollement of derived
module categories, while $P_1$ and $P_2$ are in the same ladder.

\smallskip

Consider the recollement of $\cd(\Mod A)$ generated by
$X=\Cone(P_2\ra P_1)$. The endomorphism ring of $X$ is
$\End_A(X)=k[x,y]/(x^2,y^2,xy)=:C$, where
\[x:\xymatrix{P_2\ar[r]^{\alpha}\ar[d]^\beta & P_1\ar[d]^0\\ P_2\ar[r]^{\alpha} & P_1},\qquad y:\xymatrix{P_2\ar[r]^{\alpha}\ar[d]^0 & P_1\ar[d]^\gamma\\ P_2\ar[r]^{\alpha} & P_1.}\] As a complex of left
$C$-modules, $X$ is isomorphic to $k[x]/x^2\xrightarrow{\alpha} \underline{k
\oplus k[y]/y^2}$, where the underlined term is in degree $0$, $k[x]/x^2$ and $k[y]/y^2$ are identified as
quotients of $C$ by factoring out the ideal generated by $y$ and
$x$, respectively, and $\alpha$ is the projection onto the trivial
$C$-module $k$. This complex splits into the direct sum of $k[1]$ and
$k[y]/y^2$. It is straightforward to show that as a complex of right $C$-modules
$X^{\tr}:=\Hom_A(X,A)$ is isomorphic to $\underline{k[y]/y^2} \ra k\oplus
k[x]/x^2$, which splits into the direct sum of $k$ and
$(k[x]/x^2)[1]$. Hence the total cohomology of $X^{\tr}\lten_C X$ is
infinite-dimensional. In particular it does not belong to
$\cd^b(\mod A)$. By Lemma \ref{l:finite-sides}, the recollement
 \[\xymatrix@!=7pc{**[r]\Dcal' \ar[r]|{i_*=i_!} &\cd(\Mod A) \ar@<+2.5ex>[l]|{i^!}
\ar@<-2.5ex>[l]|{i^*} \ar[r]|{j^!=j^*} &
\cd(\Mod C)\ar@<+2.5ex>[l]|{j_*} \ar@<-2.5ex>[l]|{j_!}}
\] generated by $X$ cannot be a recollement of ordinary algebras. More
precisely the right perpendicular category $\Dcal'$ of $X$ is not a
derived category of any ordinary algebra.

Consider the recollement generated by
$P_1=e_1A$, whose endomorphism ring $\End_A(P_1)=e_1Ae_1$ is
isomorphic to $k[x]/x^2$. As a right $A$-module $A/Ae_1A$ is isomorphic to $P_2$.
Thus it follows from Lemma~\ref{l:criterion-stratifying-ideal} that $P_1$ generates
a recollement of $\cd(\Mod A)$ by $\cd(\Mod A/Ae_1A)\cong\cd(k[x]/x^2)$ and
$\cd(\Mod e_1Ae_1)\cong\cd(k[x]/x^2)$. The corresponding TTF triple is $(\Tria(P_1),\Tria(P_2),\Tria(P_2)^\perp)$.

Consider the recollement generated by
$P_2=e_2A$. As a right $A$-module $A/Ae_2A$ admits the following projective resolution
\[\xymatrix{\ldots\ar[r]&P_2\ar[r]^\beta &P_2\ar[r]^\beta&P_2\ar[r]^\alpha &P_1\ar[r] &A/Ae_2A\ar[r] & 0}.\]
Thus it follows from Lemma~\ref{l:criterion-stratifying-ideal} that $P_2$ generates a
recollement of $\cd(\Mod A)$ by $\cd(\Mod A/Ae_2A)\cong\cd(k[x]/x^2)$ and
$\cd(\Mod e_2Ae_2)\cong\cd(k[x]/x^2)$. The  TTF triple corresponding to this recollement is $(\Tria(P_2),\Tria(A/Ae_2A),\Tria(A/Ae_2A)^\perp)$.

Clearly the above two TTF triples together form one TTF quadruple
\[(\Tria(P_1),\Tria(P_2),\Tria(A/Ae_2A),\Tria(A/Ae_2A)^\perp),\]
which is complete. Since this is the unique non-trivial TTF tuple of $\cd(\Mod A)$, it follows
that $A$ is $K^b(\proj)$-simple but not $\cd^-(\Mod)$-simple.
}
\end{ex}

\subsection{$\cd^b(\Mod)$-simplicity and $\cd_{fl}$-simplicity}

The following proposition follows immediately from Proposition~\ref{p:ladder-of-height-3}.

\begin{proposition}\label{p:dbMod-simplicity-and-ladder}
 Let $A$ be a $k$-algebra. If $A$ is $\cd^b(\Mod)$-simple or $\cd_{fl}$-simple, then
all non-trivial ladders of $\cd(\Mod A)$ have height $\leq 2$.
\end{proposition}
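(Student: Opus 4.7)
The proof is an almost direct application of Proposition~\ref{p:ladder-of-height-3}, so the plan is to argue the contrapositive. Assume that some non-trivial ladder $\cl$ of $\cd(\Mod A)$ has height $\geq 3$. By selecting three consecutive rows of $\cl$, I obtain a non-trivial sub-ladder of height exactly $3$, say with outer terms $\cd(\Mod B)$ and $\cd(\Mod C)$ where $B,C\neq 0$ (non-triviality of the original ladder, which by Lemma~\ref{l:commutating-direct-sum-decomposition-and-recollements}-type considerations is inherited by each of its sub-ladders, ensures that the two outer algebras do not vanish).

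Next, I invoke Proposition~\ref{p:ladder-of-height-3}, which gives that the middle recollement of this height-$3$ sub-ladder restricts both to $\cd^b(\Mod)$ and to $\cd_{fl}$. It remains to verify that the restricted recollements are themselves non-trivial, that is, that none of $\cd^b(\Mod B)$, $\cd^b(\Mod C)$, $\cd_{fl}(B)$, $\cd_{fl}(C)$ is the zero category. This is clear since $B$ and $C$ are non-zero $k$-algebras: each contains $B_B$, respectively $C_C$, as non-zero stalk complexes belonging to $\cd^b(\Mod)$ and (trivially, by the definition of $\cd_{fl}$ over any base ring, or by working with a simple quotient) also to $\cd_{fl}$.

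Combining these steps, the existence of a non-trivial ladder of height $\geq 3$ produces non-trivial recollements of $\cd^b(\Mod A)$ and of $\cd_{fl}(A)$, contradicting both $\cd^b(\Mod)$-simplicity and $\cd_{fl}$-simplicity of $A$. Therefore, if $A$ is simple with respect to either of these derived categories, every non-trivial ladder of $\cd(\Mod A)$ has height $\leq 2$.

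The argument has essentially no obstacle: the content is entirely packaged in Proposition~\ref{p:ladder-of-height-3}, and the only minor point is the observation that the restricted recollements inherit non-triviality, which reduces to the trivial fact that a non-zero algebra has a non-zero bounded derived module category (and, in the $\cd_{fl}$ case, a non-zero finite-length module).
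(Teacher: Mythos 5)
Your proof is correct in substance and takes exactly the paper's route: the paper's entire proof is the remark that the statement follows from Proposition~\ref{p:ladder-of-height-3}, applied to a height-$3$ sub-ladder of any non-trivial ladder of height $\geq 3$, which is precisely the reduction you carry out. Two small inaccuracies in your write-up: a height-$3$ sub-ladder is cut out by \emph{five} consecutive rows (three consecutive recollements), not three rows; and your justification of non-triviality in the $\cd_{fl}$ case is wrong as stated, since for a general non-zero $k$-algebra $B$ neither $B_B$ nor a simple $B$-module need have finite length over $k$ (e.g.\ $B=k(x)$ has $\cd_{fl}(B)=0$) --- a point the paper silently ignores as well, and which is harmless in the finite-dimensional setting of Theorem~\ref{t:dbmod-simplicity-and-ladder}, where $\cd_{fl}$ coincides with $\cd^b(\mod)$ and the outer algebras are non-zero and finite-dimensional by Lemma~\ref{l:finite-sides}.
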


\begin{thm}\label{t:dbmod-simplicity-and-ladder}
Let $k$ be a field and $A$ be finite-dimensional over $k$. The following are equivalent:
\begin{itemize}
\item[(i)] $A$ is $\cd^b(\Mod)$-simple,
\item[(ii)] $A$ is $\cd^b(\mod)$-simple,
\item[(iii)] $A$ is $K^b(\proj)$-simple,
\item[(iv)] all non-trivial ladders of $\cd(\Mod A)$ have height $\leq 2$.
\end{itemize}
\end{thm}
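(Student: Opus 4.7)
My plan is to leverage the already-established equivalence (iii) $\Leftrightarrow$ (iv) from Theorem \ref{t:kbproj-simplicity-and-ladder}, together with the implications (i) $\Rightarrow$ (iv) and (ii) $\Rightarrow$ (iv) given by Proposition \ref{p:dbMod-simplicity-and-ladder} (which applies since $\cd_{fl} = \cd^b(\mod)$ for our finite-dimensional $A$). These observations reduce the theorem to proving the two statements (i) $\Leftrightarrow$ (ii) and (iii) $\Rightarrow$ (i), after which the cycle closes.

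For (i) $\Leftrightarrow$ (ii), I will lift any non-trivial $\cd^b(\Mod)$- (respectively $\cd^b(\mod)$-) recollement of $A$ to the unbounded level via Proposition \ref{p:lifting-recollement}, noting that the outer algebras $B$ and $C$ are preserved. The lift restricts to the original flavour by construction, and then also to the other flavour by Corollary \ref{c:restricting-is-equivalent-for-dbMod-and-dbmod-for-fd-alg}, producing a non-trivial recollement of the desired type with the same outer algebras.

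The heart of the proof is (iii) $\Rightarrow$ (i), which I will argue contrapositively. Starting from a non-trivial $\cd^b(\Mod)$-recollement, I lift it to a $\cd(\Mod)$-recollement $(R)$ that restricts to $\cd^b(\Mod)$. Proposition \ref{p:recollement-restricting-to-dbMod} then yields $i_*(B) \in K^b(\Proj A)$ and that $j_!$ restricts to $\cd^b(\Mod)$. Since $\cd^b(\Mod) \subseteq \cd^-(\Mod)$, Proposition \ref{p:recollement-restricting-to-d-Mod}(iii) upgrades the former to $i_*(B) \in K^b(\proj A)$, using finite-dimensionality of $A$, and Corollary \ref{c:restricting-is-equivalent-for-dbMod-and-dbmod-for-fd-alg} refines the latter to $j_!$ restricting to $\cd^b(\mod)$. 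These are exactly the hypotheses that Proposition \ref{p:ladder-extension}(a) and Proposition \ref{p:ladder-extension}(b) require in order to extend $(R)$ one row downward and one row upward respectively; the result is a non-trivial ladder of derived module categories of height $3$, whose outer categories remain $\cd(\Mod B)$ and $\cd(\Mod C)$. By Proposition \ref{p:ladder-of-height-3}, its upper recollement restricts to $K^b(\proj)$, yielding a non-trivial $K^b(\proj)$-recollement and contradicting (iii). The only delicate ingredient is the upgrade from $K^b(\Proj A)$ to $K^b(\proj A)$, which is precisely what finite-dimensionality buys through Proposition \ref{p:recollement-restricting-to-d-Mod}(iii); everything else is a clean concatenation of the restriction and extension criteria developed in Sections \ref{s:lifting-and-restricting-recollements} and \ref{s:ladder}.
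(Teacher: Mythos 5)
Your proposal is correct and follows essentially the same route as the paper: both reduce to the previously established equivalences and then show that a non-trivial bounded-level recollement lifts to a $\cd(\Mod)$-recollement satisfying both extension criteria of Proposition~\ref{p:ladder-extension}, hence sits in a non-trivial ladder of height $3$. The only cosmetic difference is that the paper closes the cycle by proving (iv)$\Rightarrow$(ii) directly (stopping at the height-$3$ ladder, which already contradicts (iv)), whereas you prove (iii)$\Rightarrow$(i) and take the extra step through Proposition~\ref{p:ladder-of-height-3} to land back in $K^b(\proj)$; both are valid and use the same key lemmas.
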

\begin{proof}
Recall that in this case $\cd_{fl}(A)=\cd^b(\mod A)$.

(i)$\Leftrightarrow$(ii) This follows from Proposition~\ref{p:lifting-recollement} and Corollary~\ref{c:restricting-is-equivalent-for-dbMod-and-dbmod-for-fd-alg}.

(iii)$\Leftrightarrow$(iv) This is Theorem~\ref{t:kbproj-simplicity-and-ladder}.

(i)$\Rightarrow$(iv) This follows from Proposition~\ref{p:dbMod-simplicity-and-ladder}.

(iv)$\Rightarrow$(ii) Suppose that $A$ is not $\cd^b(\mod)$-simple. Then there is a non-trivial recollement
of $A$ on  $\cd^b(\mod)$-level. By Proposition~\ref{p:lifting-recollement}, this lifts to a recollement
of the form (\ref{eq:recollement-unbounded-in-section-derived-simplicity}) where $j_!$ restricts to
$\cd^b(\mod)$ and $i_*(B)\in K^b(\proj A)$, see Theorem~\ref{t:restricting-recollements-to-dbmod}. But then this recollement
of $\cd(\Mod A)$ extends to a non-trivial ladder of height $3$ by Proposition~\ref{p:ladder-extension}(a) and (b).
\end{proof}

\subsection{Indecomposable commutative rings are derived simple}\label{ss:comm-ring}
Let $A$ be a commutative ring. Given $\mathfrak p\in \mathrm{Spec}A$
and a complex of $A$-modules $$X:\; \cdots\to
X^i\stackrel{d^i}{\to}X^{i+1}\to\cdots$$ we consider the complex
$$X_{\mathfrak p}:\; \cdots \to X^i\otimes_A A_{\mathfrak
p}\stackrel{d^i\otimes_A A_{\mathfrak p}}{\longrightarrow}
X^{i+1}\otimes_A A_{\mathfrak p}\to\cdots$$  Since $?\otimes_A
A_{\mathfrak p}$ is an exact functor, we have $H^i(X_{\mathfrak
p})=H^i(X)\otimes_A A_{\mathfrak p}$.

\bigskip

\begin{lemma}\label{formula}
For $X\in K^b(\proj A)$, $Y\in \cd({\Mod}A)$, $\mathfrak
p\in\mathrm{Spec}A$ and any integer $n$, there is an isomorphism
$$\Hom_{\cd({\Mod}A)}(X,Y[n])\otimes_A A_{\mathfrak
p}\cong\Hom_{\cd({\Mod}A_{\mathfrak p})}(X_{\mathfrak
p},Y_{\mathfrak p}[n])$$
\end{lemma}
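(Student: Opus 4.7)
The plan is to use d\'evissage with $Y$ fixed, letting $X$ vary, and reducing to the easy case $X=A$. More specifically, since $?\ten_A A_\mathfrak{p}$ is an exact functor $\Mod A \to \Mod A_\mathfrak{p}$, it induces a triangle functor $L=?\ten_A A_\mathfrak{p}:\cd(\Mod A)\to \cd(\Mod A_\mathfrak{p})$, and application of $L$ to morphisms gives a natural $A$-linear map
\[
\Phi_X:\Hom_{\cd(\Mod A)}(X,Y[n])\longrightarrow \Hom_{\cd(\Mod A_\mathfrak{p})}(X_\mathfrak{p},Y_\mathfrak{p}[n]).
\]
The target is an $A_\mathfrak{p}$-module, so $\Phi_X$ factors through $?\ten_A A_\mathfrak{p}$ to give the desired natural map
\[
\tilde{\Phi}_X:\Hom_{\cd(\Mod A)}(X,Y[n])\ten_A A_\mathfrak{p}\longrightarrow \Hom_{\cd(\Mod A_\mathfrak{p})}(X_\mathfrak{p},Y_\mathfrak{p}[n]).
\]

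First I would check the base case $X=A$. Here the left side is $H^n(Y)\ten_A A_\mathfrak{p}$, while the right side is $\Hom_{\cd(\Mod A_\mathfrak{p})}(A_\mathfrak{p},Y_\mathfrak{p}[n])=H^n(Y_\mathfrak{p})$. The observation recorded just before the lemma, that localization commutes with cohomology, yields $\tilde{\Phi}_A$ is an isomorphism.

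Next I would run the d\'evissage. Let $\ca$ be the full subcategory of $\cd(\Mod A)$ consisting of those objects $X$ for which $\tilde{\Phi}_{X}$ is an isomorphism for every $n\in\Z$. The category $\ca$ is closed under shifts since both sides shift covariantly in $X$; it is closed under direct summands since both functors $X\mapsto\Hom(X,Y[n])\ten_A A_\mathfrak{p}$ and $X\mapsto\Hom(X_\mathfrak{p},Y_\mathfrak{p}[n])$ send direct sums of objects to direct products (here direct sums) of abelian groups; and it is closed under forming cones, by applying the Five Lemma to the two long exact sequences of Hom-groups produced by a triangle in the first variable (using that $?\ten_A A_\mathfrak{p}$ is exact on $\Ab$, so that tensoring these long exact sequences with $A_\mathfrak{p}$ keeps them exact). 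Thus $\ca$ is a triangulated subcategory closed under direct summands, and contains $A$, hence contains $\tria(A)=K^b(\proj A)$, which is exactly the assertion.

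The only slightly delicate point will be the naturality of $\tilde{\Phi}_X$ under triangles in $X$, but this reduces to checking that $L$ sends the given triangle to a triangle (automatic, since $L$ is a triangle functor) and that the long exact Hom-sequence arising from $L(X)\to L(X')\to L(X'')\to L(X)[1]$ in $\cd(\Mod A_\mathfrak{p})$ is identified, via $\tilde{\Phi}$, with the localization of the long exact Hom-sequence coming from $X\to X'\to X''\to X[1]$ in $\cd(\Mod A)$. This is a routine diagram chase, so no step presents a real obstacle.
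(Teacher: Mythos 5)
Your proof is correct, but it takes a different route from the paper. You prove the statement by d\'evissage: you construct the natural comparison map $\tilde{\Phi}_X$ induced by the exact (hence triangle) functor $?\otimes_A A_{\mathfrak p}$, verify it is an isomorphism for $X=A$ (where it reduces to $H^n(Y)\otimes_A A_{\mathfrak p}\cong H^n(Y_{\mathfrak p})$), and then check that the full subcategory on which $\tilde{\Phi}_X$ is invertible is triangulated and closed under direct summands, so that it contains $\tria(A)=K^b(\proj A)$; flatness of $A_{\mathfrak p}$ enters when you tensor the long exact Hom-sequence and apply the Five Lemma. The paper instead argues directly at the level of complexes: since $X\in K^b(\proj A)$, the derived Hom-group is the $n$-th cohomology of the total complex $\mathcal{H}om_A(X,Y)$, and the termwise isomorphism $\Hom_A(X^i,Y^j)\otimes_A A_{\mathfrak p}\cong\Hom_{A_{\mathfrak p}}(X^i_{\mathfrak p},Y^j_{\mathfrak p})$ for finitely generated projective $X^i$ (the formula from Enochs--Jenda) identifies the localised Hom-complex with $\mathcal{H}om_{A_{\mathfrak p}}(X_{\mathfrak p},Y_{\mathfrak p})$, after which exactness of localisation finishes the argument. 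Your approach has the advantage of needing only the trivial base case $X=A$ rather than the general Hom-localisation formula for finitely presented modules, at the cost of the (routine but real) bookkeeping of naturality of $\tilde{\Phi}$ along triangles; it is also very much in the spirit of the d\'evissage arguments the paper itself uses elsewhere, e.g.\ in Lemma \ref{l:compact-and-finite-dimensional}.
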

\begin{proof}
$\Hom_{\cd({\Mod}A)}(X,Y[n])=\Hom_{K({\Mod}A)}(X,Y[n])$
is the $n$-th cohomology of the total complex ${\mathcal
Hom}_A(X,Y)$. The well-known formula \cite[3.2.4]{EJ}
$$\Hom_{A}(X^i,Y^j)\otimes_A A_{\mathfrak p}\cong\Hom_{A_{\mathfrak
p}}(X_{\mathfrak p}^i,Y_{\mathfrak p}^j)$$ implies that ${\mathcal
Hom}_A(X,Y)_{\mathfrak p}\cong{\mathcal Hom}_{A_{\mathfrak
p}}(X_{\mathfrak p},Y_{\mathfrak p})$. The claim now follows from
the fact that localisation preserves cohomologies  and $X_{\mathfrak
p}\in K^b(P_{A_{\mathfrak p}})$.
\end{proof}

\begin{prop} \label{indec-compact-except}
Let $X$ be an compact exceptional object in $\cd({\Mod}A)$. If $X$ is indecomposable, then $X$ is of the form $P[n]$ with $P$ finitely generated projective and $n\in\Z$.
\end{prop}
\begin{proof}
Assume on the contrary that $X$ has a $K^b(\proj A)$-representative of the
form $$\ldots \ra 0 \to P^{-n}\to\cdots\to P^{-1} \xrightarrow{d^{-1}} P^{0}\to 0\ra \ldots$$ with $n\geq 1$ minimal. Then $C=H^0(X)$ is nontrivial and moreover the projective dimension of $C$ must be at least one. In fact, if $C$ were projective, $\Img(d^{-1})$ would be a common direct summand of $P^0$ and $P^{-1}$, and $X$ would be the direct sum of $C$ and the complex $$\ldots \ra 0 \to P^{-n}\to\cdots\to P^{-2} \ra \Ker(d^{-1})\to 0 \ra \ldots$$ contradicting the assumption that $X$ is indecomposable. Now by Lemma \ref{formula} there is $\mathfrak{p}\in \text{Spec}A$ such that $C_{\mathfrak p}$ has the same projective dimension as $C$. Since $C_{\mathfrak p} = H^0(X)\otimes_A A_{\mathfrak p} \cong H^0(X_{\mathfrak p})$, the complex $X_{\mathfrak p}$ cannot be isomorphic to a shifted projective module. As in \cite[4.9]{AKL2} we conclude that $X_{\mathfrak p}$ is not exceptional. But then $X$ cannot be exceptional, by again Lemma \ref{formula}.
\end{proof}

 The Proposition shows that every compact exceptional object in $\cd(\mathrm{Mod}A)$ is a direct sum of shifted projective modules. In particular:

\begin{thm} \label{thm:derivedsimplicity-commutativering} Every indecomposable commutative ring
$A$ is derived simple with respect to $\cd(\Mod)$.
\end{thm}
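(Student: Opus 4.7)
The plan is to leverage the preceding proposition, which implies that every compact exceptional object in $\cd(\Mod A)$ is a shift of a finitely generated projective $A$-module, and then to translate $\cd(\Mod)$-simplicity into a statement about faithful projectives over an indecomposable commutative ring.

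First, I would invoke the recollement construction recalled in Subsection \ref{sss:construction-of-recollements}: any recollement
\[
\xymatrix{\cd(\Mod B)\ar[r] & \cd(\Mod A) \ar[r] & \cd(\Mod C)}
\]
is generated by the compact exceptional object $T=j_!(C)\in\cd(\Mod A)$. The preceding proposition rules out compact objects of length $>1$ among exceptional ones, so $T\cong P[k]$ for some finitely generated projective $A$-module $P$ and some $k\in\Z$. Next, I would observe that the recollement is trivial iff one of the outer terms is trivial, i.e.\ iff $T=0$ (so $P=0$) or $T$ is a compact generator of $\cd(\Mod A)$ (so $j_!$ is an equivalence). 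Thus it suffices to prove: \emph{every nonzero finitely generated projective $P$ over an indecomposable commutative ring $A$ is a compact generator of $\cd(\Mod A)$.}

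To establish this, I would use the standard fact that $A$ being indecomposable as a ring is equivalent to $\mathrm{Spec}\,A$ being connected. Since a finitely generated projective module has locally constant rank, this rank is constant on $\mathrm{Spec}\,A$, and since $P\neq 0$ it is strictly positive. Hence $P_{\mathfrak p}\neq 0$ for every $\mathfrak p\in\mathrm{Spec}\,A$. The trace ideal $\tau(P):=\sum_{f\in\Hom_A(P,A)} f(P)$ localizes as $\tau(P)_{\mathfrak p}=\tau(P_{\mathfrak p})$, and it equals $A_{\mathfrak p}$ whenever $P_{\mathfrak p}$ is a nonzero free module over the local ring $A_{\mathfrak p}$. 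Therefore $\tau(P)=A$, so we can write $1=\sum_{i=1}^m f_i(p_i)$ for some $f_i\in\Hom_A(P,A)$ and $p_i\in P$, yielding a surjection $P^m\twoheadrightarrow A$. This surjection splits because $A$ is projective over itself, so $A$ is a direct summand of $P^m$.

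Consequently $A\in\add(P)\subseteq\tria(P)$, which forces $\Tria(P)=\Tria(A)=\cd(\Mod A)$, i.e.\ $T\cong P[k]$ is a compact generator and the recollement is trivial. The only potentially delicate step is the passage from ``constant positive rank'' to ``$\tau(P)=A$'', but this is a standard property of faithfully projective modules and can be checked by direct localization, so I expect no serious obstacle. The whole argument hinges on the structural input from the preceding proposition; without the restriction of compact exceptional objects to shifted projectives, no amount of commutative-algebra input would suffice.
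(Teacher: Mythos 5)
Your proof is correct and follows essentially the same route as the paper: both arguments reduce, via the preceding proposition, to the fact that the generating compact exceptional object must be a shift of a finitely generated projective $P$, and then use the trace ideal of $P$ to conclude that a nonzero such $P$ over an indecomposable commutative ring is a generator, making the recollement trivial. The only difference is in how the trace-ideal step is handled: the paper cites \cite[2.44]{Lam} for the fact that $\tau_P(A)$ is a direct summand of $A$ (hence $0$ or $A$ by indecomposability), whereas you prove $\tau(P)=A$ directly by localising, using that the rank of $P$ is constant and positive on the connected space $\mathrm{Spec}\,A$ --- a standard and equally valid verification.
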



\begin{proof} Let $A$ be indecomposable and $P$ a finitely generated projective $A$-module. By \cite[2.44]{Lam}, the trace $\tau_P(A)$ of $P$ in $A$ is  a direct summand of $A$, thus either zero or equal to $A$. This implies that $P$ is zero or a projective generator of $\Mod A$.
As recalled in \ref{ss:recollement-of-der-cat}, every recollement of the form
(\ref{eq:recollement-unbounded-der}) is generated by a compact exceptional object $X$ and the right hand side in (\ref{eq:recollement-unbounded-der}) equals $\Tria X$. We claim that $X$ must be a shifted projective module. If not, by Proposition \ref{indec-compact-except} it has at least two direct summands, say $P[n]$ and $Q[m]$ with $n\neq m$. Since $P$ and $Q$ are both projective generators of $\Mod A$ we have $\Hom_A(P,Q)\neq 0$ and thus $\Hom_{\Dcal(\Mod A)}(P[n],Q[m][n-m])\cong \Hom_{\Dcal(\Mod A)}(P[n],Q[n])\neq 0$. This shows that $P[n]\oplus Q[m]$ cannot be exceptional. Now $X$ has the form $P[n]$ with $P$ finitely generated projective. Then $\Tria X = \Tria P[n] = \Tria P = \cd(\Mod A)$. The claim is proven.
\end{proof}

Moreover, we recover a result from \cite{CM,PT} as a special case.

\begin{cor} Every finitely generated tilting module over a commutative ring is projective.\end{cor}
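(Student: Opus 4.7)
The approach is to apply the structural observation used in the proof of Theorem~\ref{thm:derivedsimplicity-commutativering}, namely that every compact exceptional object in $\cd(\Mod R)$ over a commutative ring $R$ is, up to shift, a finitely generated projective $R$-module. Crucially, this statement (as the paper notes directly before Theorem~\ref{thm:derivedsimplicity-commutativering}) requires neither indecomposability of $R$ nor anything beyond the Proposition that precedes it, so it is available for arbitrary commutative rings.

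First I would verify that a finitely generated tilting module $T_R$ is a compact exceptional object of $\cd(\Mod R)$. Compactness follows because $T$ has finite projective dimension and is finitely presented: it admits a bounded projective resolution by finitely generated projectives and therefore lies in $K^b(\proj R)$. Exceptionality follows from the defining properties of a classical tilting module: $\Ext^i_R(T,T)=0$ for $i>0$ holds by definition, while for $i<0$ the vanishing is automatic since $T$ is concentrated in cohomological degree $0$.

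Having established these two properties, the observation immediately yields an isomorphism $T\cong P[n]$ in $\cd(\Mod R)$ for some finitely generated projective $R$-module $P$ and some integer $n$. Since $T$ is an honest module, its cohomology is concentrated in degree $0$, forcing $n=0$. Hence $T\cong P$ as $R$-modules, so $T$ is projective.

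There is no genuine obstacle: all the work is done by the Proposition preceding Theorem~\ref{thm:derivedsimplicity-commutativering}, whose content is a length--self-extension bound that localises well over commutative rings via Lemma~\ref{formula} and reduces to the local commutative case treated in \cite[4.9]{AKL2}. The corollary is thus a direct specialisation of that structural fact to the tilting setting, and in contrast to Theorem~\ref{thm:derivedsimplicity-commutativering} itself, the trace argument and indecomposability hypothesis are not needed here.
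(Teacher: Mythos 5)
Your overall route is exactly the paper's: show that $T$ is a compact exceptional object of $\cd(\Mod R)$ and then invoke the Proposition preceding Theorem~\ref{thm:derivedsimplicity-commutativering} to conclude that $T\cong P[n]$, hence $T\cong P$ since $T$ is a module. The exceptionality step and the final identification of the shift are fine.

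The gap is in your compactness argument. You assert that $T$ is finitely presented and that finite projective dimension plus finite generation yields a bounded resolution by finitely generated projectives, ``therefore $T\in K^b(\proj R)$''. Over an arbitrary (in particular non-noetherian, non-coherent) commutative ring this is false as a general implication: a finitely generated module of finite projective dimension need not be perfect, because the syzygies in a projective resolution need not be finitely generated at each stage (one needs $T$ to be of type $FP_\infty$, not merely finitely generated). This is precisely the one non-formal point of the corollary, and the paper does not treat it as routine: it deduces that a finitely generated tilting module admits a projective resolution by finitely generated projectives by combining the theorem that every tilting class is definable (equivalently, of finite type) \cite{BS} with the Mittag--Leffler criterion \cite[9.13 (5)]{relml}. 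Without an argument of this kind (or a noetherian hypothesis, which the statement does not impose), your proof of compactness does not go through; once compactness is supplied, the rest of your argument is correct and coincides with the paper's.
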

\begin{proof}
It is well known that every finitely generated tilting  module $T$
has a projective resolution with finitely generated projective
modules (e.g.~by combining the fact that every tilting class is definable
\cite{BS} with  \cite[9.13 (5)]{relml}).  Then $T$ is a compact
exceptional object, so it is projective.
\end{proof}

Here is another consequence of the Proposition above.

\begin{cor}
A commutative ring $A$ is derived equivalent to a ring $B$ if and only if $A$ and $B$ are Morita equivalent.
\end{cor}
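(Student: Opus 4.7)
The plan is to handle the two directions separately. The ``if'' direction is standard: a Morita equivalence between $A$ and $B$ comes from a finitely generated projective generator $P_A$ with $B\cong\End_A(P)$, and such an equivalence of module categories automatically induces an equivalence of unbounded derived categories.

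For the ``only if'' direction, I would invoke Rickard's derived Morita theorem to upgrade the given derived equivalence $\cd(\Mod A)\simeq\cd(\Mod B)$ to a tilting complex $T\in K^b(\proj A)$ with $B\cong\End_{\cd(\Mod A)}(T)$. Such a $T$ is a compact exceptional object which moreover generates $\cd(\Mod A)$. Applying the Proposition immediately preceding Theorem \ref{thm:derivedsimplicity-commutativering}, this $T$ is projective up to shift, so $T\cong P[n]$ for some finitely generated projective $A$-module $P$ and some integer $n$. Since $\End_A(P[n])=\End_A(P)$, this yields $B\cong\End_A(P)$.

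To conclude, I need $P$ to be a progenerator of $\Mod A$. The generation property of $T$ gives $\Tria(P)=\Tria(T)=\cd(\Mod A)$, so in particular $P\neq 0$. Arguing exactly as in the proof of Theorem \ref{thm:derivedsimplicity-commutativering}, \cite[2.44]{Lam} implies that the trace ideal $\tau_P(A)$ is a direct summand of the commutative ring $A$, hence either zero or all of $A$; since $P\neq 0$ it must equal $A$, which means $A\in\add P$. The only non-routine input is Rickard's theorem, used to produce the tilting complex; everything else reduces at once to the preceding Proposition and the trace-ideal argument already employed in Theorem \ref{thm:derivedsimplicity-commutativering}, so I anticipate no real obstacle.
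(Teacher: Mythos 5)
Your argument is correct and coincides with the paper's own proof: Rickard's theorem produces a tilting complex, the Proposition preceding Theorem~\ref{thm:derivedsimplicity-commutativering} forces it to be a shifted finitely generated projective $P[n]$, and the trace-ideal argument identifies $B\cong\End_A(P)$ with $P$ a progenitor of $\Mod A$. The one caveat --- shared with the paper, which simply writes ``as shown above'' --- is that the dichotomy ``$\tau_P(A)$ is zero or all of $A$'' uses indecomposability of $A$, which is not assumed in this corollary; for a possibly decomposable ring you should instead deduce $\tau_P(A)=A$ from the generation property $\Tria(P)=\cd(\Mod A)$ rather than from $P\neq 0$ alone, since a nonzero idempotent $e$ with $\tau_P(A)= eA\neq A$ would place $(1-e)A$ in the right perpendicular of $P$ and contradict generation.
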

\begin{proof}
By a well known result due to Rickard \cite{Rickard89},    $A$ and $B$ are derived equivalent if and only if $B$ is the endomorphism ring of a tilting complex $T$ over $A$. But, as  shown above,  $T$ is of the form $P[n]$ where $P$ is a finitely generated projective generator of $\Mod A$  and $n\in\Z$. Hence $B\cong \End_A(P)$ is Morita equivalent to $A$.
\end{proof}

As a consequence, the derived Picard group is the direct product of the infinite cyclic group generated by the shift in the derived category and the classical Picard group, which describes the Morita equivalences.

\smallskip

Finally we briefly mention a result on co-t-structures and leave the details to the interested readers. Let $A$ be an indecomposable commutative ring. As in the proof of Theorem~\ref{thm:derivedsimplicity-commutativering}, one can show that any presilting object of $K^b(\proj A)$ is either $0$ or of the form $P[n]$ for some integer $n$ and some finitely generated projective generator $P$ of $\Mod A$. This provides an alternative approach and generalises \cite[5.7]{StovicekPospisil12}. Moreover, since
any object of the co-heart of a co-$t$-structure on $K^b(\proj A)$ is presilting, it follows that if the co-heart is non-trivial, then it is $(\proj A)[n]$ for some integer $n$, in particular, the co-$t$-structure is bounded. Consequently,
by \cite[5.9]{MendozaSaenzSantiagoSouto10}, any  co-$t$-structure on $K^b(\proj A)$ with non-trivial co-heart is a shift of the standard one, compare \cite[5.6]{StovicekPospisil12}.

\section{Algebraic K-theory}\label{s:k-theory}

This section is devoted to the study of K-groups. { We restrict to finite-dimensional algebras.  In this case} Schlichting's $(-1)$-st K-group vanishes. Using this result and a result on silting objects we show that if a $\cd(\Mod)$-recollement is given, the Grothendieck group of the middle algebra is the direct sum of those of the two outer algebras. For a $\cd^-(\Mod)$-recollement, we use a method of Chen and Xi to show that the $i$-th K-group is the direct sum of the $i$-th K-groups of the two outer algebras for any integer $i$.

\subsection{Frobenius pairs and K-groups}
We follow~\cite{Schlichting06}. A \emph{Frobenius pair} is a pair $(\ca,\ca_0)$, where $\ca$ is a small Frobenius category and $\ca_0$ is a full Frobenius subcategory, { i.e. an extension closed full subcategory of $\ca$ which inherits the structure of a Frobenius category}. To a Frobenius pair $(\ca,\ca_0)$ and $i\in\mathbb{Z}$ we associate the $i$-th K-group $\mathbb{K}_i(\ca,\ca_0)$ of $(\ca,\ca_0)$, {see~\cite[Section 12]{Schlichting06}}. For $i=0$, the group $\mathbb{K}_0(\ca,\ca_0)$ is the Grothendieck group of the idempotent completion {(\cite[Definition 1.2]{BalmerSchlichting01})} of the associated triangulated category $\cd(\ca,\ca_0)=\underline{\ca}/\underline{\ca_0}$.

{ For example, for a finite-dimensional algebra $A$ over a field $k$, let $\cc^b(\mod A)$ be the category of bounded complexes of $A$-modules from $\mod A$ and $\acyc^b(\mod A)$ be its full subcategory of acyclic complexes. Then $\mathbb{K}_0(\cc^b(\mod A),\acyc^b(\mod A))=\mathbb{K}_0(\cd^b(\mod A))$, which is isomorphic to the usual Grothendieck group $\mathbb{K}_0(A)$ of the algebra $A$, cf.\cite[III.1]{Happel}.
}

A functor of Frobenius pairs $(\ca,\ca_0)\rightarrow (\cb,\cb_0)$ is a functor $\ca\rightarrow\cb$ of Frobenius categories which restricts to a functor $\ca_0\rightarrow\cb_0$. A sequence $(\ca,\ca_0)\rightarrow(\cb,\cb_0)\rightarrow (\cc,\cc_0)$ of Frobenius pairs is a \emph{short exact sequence} if the induced sequence of triangulated categories $\cd(\ca,\ca_0)\stackrel{i}{\rightarrow}\cd(\cb,\cb_0)\stackrel{p}{\rightarrow} \cd(\cc,\cc_0)$ is short exact, i.e. $i$ is fully faithful and $p$ induces an equivalence $\cd(\cb,\cb_0)/\im(i)\stackrel{\sim}{\rightarrow}\cd(\cc,\cc_0)$ up to direct summands.

\begin{thm}\emph{(\cite[Theorem 9]{Schlichting06})}\label{t:long-exact-seq-of-k-groups} Let $(\ca,\ca_0)\rightarrow(\cb,\cb_0)\rightarrow (\cc,\cc_0)$ be a short exact sequence of Frobenius pairs. Then there is a long exact sequence of K-groups
\[\xymatrix@R=0.5pc@C=0.9pc{\ldots\ar[r] & \mathbb{K}_i(\ca,\ca_0)\ar[r] &\mathbb{K}_i(\cb,\cb_0)\ar[r] & \mathbb{K}_i(\cc,\cc_0)
\ar[r] &\mathbb{K}_{i-1}(\ca,\ca_0)\ar[r] &\mathbb{K}_{i-1}(\cb,\cb_0)\ar[r] & \mathbb{K}_{i-1}(\cc,\cc_0)\ar[r] &\ldots}\]
\end{thm}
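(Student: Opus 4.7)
The plan is to invoke this statement as a black box: it is precisely Theorem~9 of~\cite{Schlichting06}, so the only task would be to check that the hypotheses (a short exact sequence of Frobenius pairs in the sense just defined) match Schlichting's setup, which is immediate from the definitions recalled just above. Nevertheless, let me sketch the strategy underlying Schlichting's argument, since it is what we are implicitly relying on.

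First, to any Frobenius pair $(\ca,\ca_0)$ one associates a K-theory \emph{spectrum} $\mathbb{K}(\ca,\ca_0)$ whose homotopy groups are the K-groups in the statement. The connective (non-negative) part comes from Waldhausen's $S$-construction applied to the associated triangulated category $\cd(\ca,\ca_0)=\underline{\ca}/\underline{\ca_0}$, passed to an idempotent completion so that $\mathbb{K}_0$ agrees with the Grothendieck group defined above. The negative K-groups require more care and are built via Schlichting's ``cone construction'', a Frobenius-pair analogue of Bass's delooping: one constructs a cone Frobenius pair whose K-theory vanishes, and whose quotient is a suspension of the original pair, thereby producing a functorial delooping of the connective K-theory spectrum.

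Next, one verifies that a short exact sequence of Frobenius pairs induces a homotopy fibration sequence of K-theory spectra
\[
\mathbb{K}(\ca,\ca_0)\longrightarrow \mathbb{K}(\cb,\cb_0)\longrightarrow \mathbb{K}(\cc,\cc_0).
\]
For the connective part this is essentially Waldhausen's localisation theorem applied to the induced short exact sequence of triangulated categories (using that the functor $\cd(\ca,\ca_0)\to\cd(\cb,\cb_0)$ is fully faithful and that the other induced functor becomes an equivalence after idempotent completion of the quotient). The desired long exact sequence is then the long exact sequence of homotopy groups of this fibration.

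The main obstacle --- and the original contribution of~\cite{Schlichting06} beyond the classical localisation theorem --- is showing that the cone construction is compatible with short exact sequences of Frobenius pairs, so that the connective fibration sequence deloops to an honest fibration of spectra and the long exact sequence extends into arbitrarily negative degrees. For our purposes here we only need the theorem as stated and used as an input to the subsequent K-theoretic computations, so invoking~\cite[Theorem~9]{Schlichting06} directly is the cleanest route.
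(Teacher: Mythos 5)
Your proposal is correct and coincides with what the paper does: the theorem is stated as an imported result and the paper gives no proof of its own, simply citing \cite[Theorem 9]{Schlichting06}, exactly as you propose. Your sketch of Schlichting's underlying argument (delooping via the cone construction plus Waldhausen localisation for the connective part) is a reasonable summary but is not needed here, since the hypotheses match Schlichting's setup verbatim.
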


{\it From now on, let $k$ be a field and let $A$ be a finite-dimensional $k$-algebra.} Recall that $\proj A$ denotes the category of finitely generated projective $A$-modules. By abuse of notation, we will also denote by $\proj A$ its skeleton, which by definition consists of one representative from each isomorphism class of objects. Let $\cc^b(\proj A)$ be the category of bounded complexes of finitely generated projective $A$-modules. It has a natural structure of a Frobenius category: the conflations are the componentwise split short exact sequence of complexes. Let $\cc^b_0(\proj A)$ be the full subcategory of $\cc^b(\proj A)$ consisting of null-homotopic complexes. Then $(\cc^b(\proj A),\cc^b_0(\proj A))$ is a Frobenius pair and {we call it a \emph{Frobenius model} of $K^b(\proj A)$ since the associated triangulated category $\underline{\cc^b(\proj A)}/\underline{\cc^b_0(\proj A)}$ is $K^b(\proj A)$}. We denote $\mathbb{K}_i(A)=\mathbb{K}_i(\cc^b(\proj A),\cc^b_0(\proj A))$.

\subsection{Vanishing of $\mathbb{K}_{-1}$}  Recall that $\mod A$ denotes the category of finitely generated $A$-modules. By abuse of notation, we will also denote by $\mod A$ its skeleton. Define the singularity category $\cd_{sg}(A)$ as the triangle quotient $\cd^b(\mod A)/K^b(\proj A)$.

Let $\cc^b_0(\mod A)$ be the Frobenius subcategory of $\cc^b(\mod A)$ corresponding to the essential image of the embedding $K^b(\proj A)\hookrightarrow \cd^b(\mod A)$. Then $(\cc^b(\mod A),\acyc^b(\mod A))$ and $(\cc^b(\mod A),\cc^b_0(\mod A))$ are respectively Frobenius models of $\cd^b(\mod A)$ and $\cd_{sg}(A)$. Therefore we have a short exact sequence of Frobenius pairs
\[\xymatrix{(\cc^b(\proj A),\cc^b_0(\proj A))\ar[r] & (\cc^b(\mod A),\acyc^b(\mod A))\ar[r] &(\cc^b(\mod A),\cc^b_0(\mod A))},\]
which induces a long exact sequence
\[\xymatrix@R=0.4pc@C=0.9pc{\ldots\ar[r] & \mathbb{K}_0(A)\ar[r] &\mathbb{K}_0(\cc^b(\mod A),\acyc^b(\mod A))\ar[r] & \mathbb{K}_0(\cc^b(\mod A),\cc^b_0(\mod A))\\
\ar[r] &\mathbb{K}_{-1}(A)\ar[r] &\mathbb{K}_{-1}(\cc^b(\mod A),\acyc^b(\mod A))\ar[r] & \mathbb{K}_{-1}(\cc^b(\mod A),\cc^b_0(\mod A))\ar[r] &\ldots}\]
By~\cite[Theorem 6]{Schlichting06}, $\mathbb{K}_{-1}(\cc^b(\mod A),\acyc^b(\mod A))$ vanishes. As a consequence, $\mathbb{K}_{-1}(A)$ is exactly the obstruction of the idempotent completeness of $\cd_{sg}(A)$.

\begin{proposition}\label{p:Kminus1-as-obstruction}
 $\mathbb{K}_{-1}(A)$ vanishes if and only if $\cd_{sg}(A)$ is idempotent complete.
\end{proposition}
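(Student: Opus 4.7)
The plan is to read off the statement from the long exact K\nobreakdash-theory sequence of Theorem~\ref{t:long-exact-seq-of-k-groups}, combined with the standard description of the obstruction to idempotent completeness of a triangulated category in terms of Grothendieck groups.

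First I would write down the relevant piece of the long exact sequence attached to the short exact sequence
\[
(\cc^b(\proj A),\cc^b_0(\proj A))\rightarrow(\cc^b(\mod A),\acyc^b(\mod A))\rightarrow(\cc^b(\mod A),\cc^b_0(\mod A)),
\]
namely
\[
\mathbb{K}_0(\cc^b(\mod A),\acyc^b(\mod A))\rightarrow\mathbb{K}_0(\cc^b(\mod A),\cc^b_0(\mod A))\rightarrow\mathbb{K}_{-1}(A)\rightarrow 0,
\]
where the $0$ on the right is Schlichting's vanishing \cite[Theorem~6]{Schlichting06} already quoted above. Since $A$ is a finite\nobreakdash-dimensional $k$\nobreakdash-algebra, $\mod A$ is Krull--Schmidt, hence so is $\cd^b(\mod A)$, and in particular $\cd^b(\mod A)$ is idempotent complete. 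Therefore $\mathbb{K}_0(\cc^b(\mod A),\acyc^b(\mod A))=\mathbb{K}_0(\cd^b(\mod A))$, while $\mathbb{K}_0(\cc^b(\mod A),\cc^b_0(\mod A))=\mathbb{K}_0(\cd_{sg}(A)^{\natural})$, the Grothendieck group of the idempotent completion of the singularity category.

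Next I would identify the image of the left\nobreakdash-hand map. It factors as
\[
\mathbb{K}_0(\cd^b(\mod A))\,\twoheadrightarrow\,\mathbb{K}_0(\cd_{sg}(A))\,\hookrightarrow\,\mathbb{K}_0(\cd_{sg}(A)^{\natural}),
\]
where the first map is surjective because $\cd_{sg}(A)$ is by definition the Verdier quotient $\cd^b(\mod A)/K^b(\proj A)$ (classes of objects generate the $\mathbb{K}_0$ of a Verdier quotient), and the second map is injective by the standard fact that for any essentially small triangulated category $\mathcal T$ the natural map $\mathbb{K}_0(\mathcal T)\to\mathbb{K}_0(\mathcal T^{\natural})$ is injective with cokernel measuring precisely the failure of $\mathcal T$ to be idempotent complete (see e.g.~Thomason). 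Hence the image equals the subgroup $\mathbb{K}_0(\cd_{sg}(A))\subseteq\mathbb{K}_0(\cd_{sg}(A)^{\natural})$.

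Combining these two observations, the exact sequence yields the isomorphism
\[
\mathbb{K}_{-1}(A)\;\cong\;\mathbb{K}_0(\cd_{sg}(A)^{\natural})\big/\mathbb{K}_0(\cd_{sg}(A)).
\]
The right\nobreakdash-hand side vanishes if and only if the inclusion $\mathbb{K}_0(\cd_{sg}(A))\hookrightarrow\mathbb{K}_0(\cd_{sg}(A)^{\natural})$ is surjective, if and only if $\cd_{sg}(A)$ is idempotent complete, which gives the claim. The only non\nobreakdash-formal ingredients are (a) idempotent completeness of $\cd^b(\mod A)$, which follows from the Krull--Schmidt property of $\mod A$, and (b) the identification of the cokernel of $\mathbb{K}_0(\mathcal T)\to\mathbb{K}_0(\mathcal T^{\natural})$ with the obstruction to idempotent completeness; I expect pinning down a clean citation for (b) to be the main (and essentially only) point requiring care.
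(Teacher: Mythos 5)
Your argument is correct and follows essentially the same route as the paper: the paper writes down the same exact sequence $\mathbb{K}_0(\cc^b(\mod A),\acyc^b(\mod A))\xrightarrow{p}\mathbb{K}_0(\cc^b(\mod A),\cc^b_0(\mod A))\rightarrow\mathbb{K}_{-1}(A)\rightarrow 0$ and observes that $\mathbb{K}_{-1}(A)=0$ iff $p$ is surjective, citing Schlichting's Remark~1 for the equivalence of surjectivity with idempotent completeness of $\cd_{sg}(A)$. Your proposal merely unpacks that citation (via Thomason's description of the cokernel of $\mathbb{K}_0(\mathcal T)\to\mathbb{K}_0(\mathcal T^{\natural})$), which is a legitimate and complete way to supply the same step.
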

\begin{proof}
We obtain from above an exact sequence
\[\xymatrix{\mathbb{K}_0(\cc^b(\mod A),\acyc^b(\mod A))\ar[r]^(0.52)p & \mathbb{K}_0(\cc^b(\mod A),\cc^b_0(\mod A))
\ar[r] &\mathbb{K}_{-1}(A)\ar[r] & 0}.\]
{Therefore  $\mathbb{K}_{-1}(A)=0$ holds  if and only if $p$ is surjective. The latter condition is satisfied if and only if $\cd_{sg}(A)$ is idempotent complete, see for example~\cite[Remark 1]{Schlichting06}. }
\end{proof}

\begin{corollary}\label{c:vanishing-of-Kminus1}  $\mathbb{K}_{-1}(A)=0$.
\end{corollary}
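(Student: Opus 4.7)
The plan is to invoke Proposition~\ref{p:Kminus1-as-obstruction}, which reduces the vanishing $\mathbb{K}_{-1}(A)=0$ to the assertion that the singularity category $\cd_{sg}(A)=\cd^b(\mod A)/K^b(\proj A)$ is idempotent complete. Thus the problem becomes purely category-theoretic: one must split every idempotent in the Verdier quotient.

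The key observation is that $\cd^b(\mod A)$ is a Krull--Schmidt triangulated $k$-category. Since $k$ is a field and $A$ is finite-dimensional, for any $X,Y\in\cd^b(\mod A)$ the Hom-space $\Hom_{\cd^b(\mod A)}(X,Y)$ is finite-dimensional over $k$ (it is computed, for instance, as a cohomology of $\RHom_A(X,Y)$ using a minimal projective resolution of $X$, whose components in each degree are finitely generated); in particular every endomorphism ring is a finite-dimensional, hence semiperfect, $k$-algebra, so every object admits an essentially unique finite direct sum decomposition into indecomposables with local endomorphism rings, and idempotents split. The thick subcategory $K^b(\proj A)$ inherits the same property.

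To descend idempotent completeness to $\cd_{sg}(A)$, given an idempotent $e\in\mathrm{End}_{\cd_{sg}(A)}(X)$ I would represent $e$ by a roof $X\xleftarrow{s} Y\xrightarrow{f} X$ with $\mathrm{Cone}(s)\in K^b(\proj A)$, replace $X$ by $X\oplus Y$, and produce a genuine endomorphism $g$ of $X$ in $\cd^b(\mod A)$ satisfying $g^2-g\in I$, where $I\subseteq\mathrm{End}_{\cd^b(\mod A)}(X)$ is the two-sided ideal of endomorphisms factoring through some object of $K^b(\proj A)$. Using the Krull--Schmidt decomposition $X=\bigoplus X_i$ and the local nature of each $\mathrm{End}(X_i)$, a block-by-block analysis of $g$ should carve out a direct summand of $X$ in $\cd^b(\mod A)$ whose image in $\cd_{sg}(A)$ realises the splitting of $e$.

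The main obstacle is this last step: the ideal $I$ is not in general contained in the Jacobson radical of $\mathrm{End}_{\cd^b(\mod A)}(X)$, so one cannot simply invoke idempotent lifting in a finite-dimensional $k$-algebra. The argument must therefore use the Krull--Schmidt structure globally --- exploiting uniqueness of indecomposable decompositions rather than a ring-theoretic lifting lemma --- to produce the required splitting in the quotient.
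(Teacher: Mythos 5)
Your first step --- reducing $\mathbb{K}_{-1}(A)=0$ to the idempotent completeness of $\cd_{sg}(A)$ via Proposition~\ref{p:Kminus1-as-obstruction} --- is exactly what the paper does. But the paper then simply cites \cite[Corollary 2.4]{ChenXW11} for the fact that the singularity category of a finite-dimensional algebra is idempotent complete, whereas you attempt to prove this fact directly, and your argument has a genuine gap that you yourself flag: you do not actually split the idempotent in the quotient. The strategy ``$\cd^b(\mod A)$ is Krull--Schmidt, hence its Verdier quotient by $K^b(\proj A)$ is idempotent complete'' cannot work in the generality in which you deploy it. A Verdier quotient of a Krull--Schmidt (even idempotent-complete) triangulated category by a thick subcategory need not be idempotent complete --- by Schlichting's theory, the failure of idempotent completeness of exactly such a quotient is what $\mathbb{K}_{-1}$ measures (this is the content of Proposition~\ref{p:Kminus1-as-obstruction} itself, and of \cite[Remark 1]{Schlichting06}). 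So an argument that only uses the Krull--Schmidt property of the ambient category and of the subcategory would prove that \emph{every} such quotient is idempotent complete, which is false; the ideal $I$ of maps factoring through $K^b(\proj A)$ is, as you note, not contained in the radical of $\End_{\cd^b(\mod A)}(X)$, and no amount of ``global'' uniqueness of decompositions upstairs repairs this, since the obstruction lives entirely in the quotient.

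What a correct direct proof needs is specific structure of the singularity category: every object of $\cd_{sg}(A)$ is isomorphic to $M[n]$ for a module $M$ and, replacing $M$ by a high syzygy, morphism spaces are computed as a filtered colimit of stable Hom-spaces $\underline{\Hom}_A(\Omega^iM,\Omega^iN)$. An idempotent of $\End_{\cd_{sg}(A)}(M)$ therefore lifts, after passing to a sufficiently high syzygy, to an honest idempotent in the stable module category $\underline{\mod}\,A$, where it splits because $\underline{\mod}\,A$ is a quotient of the Krull--Schmidt category $\mod A$ by an ideal and hence has semiperfect endomorphism rings; the splitting then descends to $\cd_{sg}(A)$. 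This is essentially the argument behind \cite[Corollary 2.4]{ChenXW11}. Either supply this syzygy argument or, as the paper does, cite that result; as written, your proposal does not establish the corollary.
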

\begin{proof} By~\cite[Corollary 2.4]{ChenXW11}, $\cd_{sg}(A)$ is idempotent complete. The desired result follows immediately from Proposition~\ref{p:Kminus1-as-obstruction}.
\end{proof}

\subsection{The long exact sequence}\label{ss:long-exact-sequence}
{ For our purpose it will be useful to employ} another Frobenius model for $K^b(\proj A)$.
Let $\cc^{-,c}(\proj A)$ denote the category of right bounded complexes which are homotopic equivalent to complexes in $\cc^b(\proj A)$ and let $\cc^{-,c}_0(\proj A)$ denote its full subcategory consisting of null-homotopic complexes. Let $K^{-,c}(\proj A)$ denote the stable category of $\cc^{-,c}(\proj A)$. Then the canonical embedding $(\cc^b(\proj A),\cc^b_0(\proj A))\rightarrow(\cc^{-,c}(\proj A),\cc^{-,c}_0(\proj A))$ induces a triangle equivalence $K^b(\proj A)\rightarrow K^{-,c}(\proj A)$, and by~\cite[Theorem 9]{Schlichting06}, we have a canonical isomorphism for $i\in\mathbb{Z}$
$$\mathbb{K}_i(A)=\mathbb{K}_i(\cc^b(\proj A),\cc^b_0(\proj A))\cong \mathbb{K}_i(\cc^{-,c}(\proj A),\cc^{-,c}_0(\proj A))$$

{
Assume that there is a recollement of the form (\ref{eq:recollement-unbounded-der}). Recall from Lemma~\ref{l:finite-sides} and Lemma~\ref{l:useful-info} that $B$ and $C$ are necessarily finite-dimensional over $k$,    the functors $i^*$ and $j_!$ restrict to $K^b(\proj)$, and moreover,
 there is a right bounded complex of finitely generated projective $C$-$A$-bimodules $X$ such that $j_!=?\lten_C X=?\ten_C X$. As a complex of $A$-modules, $X$ belongs to $\cc^{-,c}(\proj A)$ and it follows that $?\ten_C X: \cc^{-,c}(\proj C)\rightarrow \cc^{-,c}(\proj A)$ is a well-defined functor
 of Frobenius categories, which induces a functor $(\cc^{-,c}(\proj C),\cc^{-,c}_0(\proj C))\rightarrow (\cc^{-,c}(\proj A),\cc^{-,c}_0(\proj A))$ of Frobenius pairs. Similarly, there is a right bounded complex of finitely generated $A$-$B$-bimodules $Y$ such that $i^*=?\ten_A Y$ induces a functor $(\cc^{-,c}(\proj A),\cc^{-,c}_0(\proj A))\rightarrow (\cc^{-,c}(\proj B),\cc^{-,c}_0(\proj B))$ of Frobenius pairs.
So, we obtain a sequence of Frobenius pairs
\[\xymatrix{(\cc^{-,c}(\proj C),\cc^{-,c}_0(\proj C))\ar[r]& (\cc^{-,c}(\proj A),\cc^{-,c}_0(\proj A))\ar[r] & (\cc^{-,c}(\proj B),\cc^{-,c}_0(\proj B))}.\]
We claim that this is a  short exact  sequence.
In fact, by~\cite[Theorem 2.1]{Neeman92a}, there is an equivalence of triangulated categories up to direct summands
\begin{eqnarray}K^b(\proj A)/\tria(j_!(C))\stackrel{\simeq}{\longrightarrow}K^b(\proj B)\label{eq:first-row-compact}\end{eqnarray}
which is even an equivalence, because $\mathbb{K}_{-1}(C)=0$ and $K^b(\proj A)/\tria(j_!(C))$ is thus idempotent complete by~\cite[Remark 1]{Schlichting06}.
}

It follows from~\cite[Theorem 9]{Schlichting06} that there is a long exact sequence for $i\in\mathbb{Z}$
\begin{eqnarray*}\xymatrix@R=0.5pc@C=1pc{\ldots\ar[r] & \mathbb{K}_i(C)\ar[r] &\mathbb{K}_i(A)\ar[r] & \mathbb{K}_i(B)
\ar[r] &\mathbb{K}_{i-1}(C)\ar[r] &\mathbb{K}_{i-1}(A)\ar[r] & \mathbb{K}_{i-1}(B)\ar[r] &\ldots}\end{eqnarray*}
By Corollary~\ref{c:vanishing-of-Kminus1}, the groups $\mathbb{K}_{-1}(A)$, $\mathbb{K}_{-1}(B)$ and $\mathbb{K}_{-1}(C)$ are trivial. Thus we have the following corollary.
\begin{corollary} Let $A$, $B$ and $C$ be finite-dimensional $k$-algebras admiting a recollement of the form (\ref{eq:recollement-unbounded-der}). Then there are  long exact sequences of K-groups
\[\xymatrix@R=0.5pc@C=1pc{\cdots\ar[r]&\mathbb{K}_i(C)\ar[r] &\mathbb{K}_i(A)\ar[r]&\mathbb{K}_i(B)\ar[r]&
\cdots\ar[r] & \mathbb{K}_0(C)\ar[r] & \mathbb{K}_0(A)\ar[r] & \mathbb{K}_0(B)\ar[r] & 0,}~~i\geq 0,\]
\[\xymatrix@R=0.5pc@C=1pc{0\ar[r] & \mathbb{K}_{-2}(C)\ar[r]&\mathbb{K}_{-2}(A)\ar[r] &\mathbb{K}_{-2}(B)\ar[r] &\ldots\ar[r] &\mathbb{K}_i(C)\ar[r] &\mathbb{K}_i(A)\ar[r]&\mathbb{K}_i(B)\ar[r]&
 \ldots,}~~i\leq -2.\]
\end{corollary}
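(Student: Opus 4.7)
The plan is direct: the two displayed sequences are immediate consequences of two ingredients that have already been assembled in the preceding paragraphs. First, the short exact sequence of Frobenius pairs
\[(\cc^{-,c}(\proj C),\cc^{-,c}_0(\proj C))\rightarrow(\cc^{-,c}(\proj A),\cc^{-,c}_0(\proj A))\rightarrow(\cc^{-,c}(\proj B),\cc^{-,c}_0(\proj B))\]
constructed in Subsection~\ref{ss:long-exact-sequence} produces, via Theorem~\ref{t:long-exact-seq-of-k-groups}, the two-sided long exact sequence
\[\cdots\rightarrow\mathbb{K}_i(C)\rightarrow\mathbb{K}_i(A)\rightarrow\mathbb{K}_i(B)\rightarrow\mathbb{K}_{i-1}(C)\rightarrow\cdots\]
indexed by all $i\in\mathbb{Z}$. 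Second, Corollary~\ref{c:vanishing-of-Kminus1} supplies the vanishings $\mathbb{K}_{-1}(A)=\mathbb{K}_{-1}(B)=\mathbb{K}_{-1}(C)=0$.

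What I would then do is splice off the degree $-1$ terms. Inserting $\mathbb{K}_{-1}(C)=0$ makes the connecting map from $\mathbb{K}_0(B)$ land in a trivial group, so the portion of the long exact sequence for $i\geq 0$ reads as in the first displayed formula, terminating with $\mathbb{K}_0(B)\rightarrow 0$. Dually, inserting $\mathbb{K}_{-1}(B)=0$ kills the source of the connecting morphism into $\mathbb{K}_{-2}(C)$, so the portion for $i\leq -2$ opens with $0\rightarrow\mathbb{K}_{-2}(C)$, as in the second displayed formula. Exactness at every remaining position is inherited verbatim from the original long exact sequence.

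There is no genuine obstacle remaining at this stage; the substantive work has been packaged earlier. One ingredient is the verification that the sequence of Frobenius pairs is honestly short exact, which required upgrading the Verdier quotient equivalence $K^b(\proj A)/\tria(j_!(C))\simeq K^b(\proj B)$ from an equivalence up to direct summands to a genuine equivalence by invoking idempotent completeness, itself a consequence of $\mathbb{K}_{-1}(C)=0$. The other is Corollary~\ref{c:vanishing-of-Kminus1} proper, which rested on idempotent completeness of the singularity category. The corollary under consideration is the bookkeeping statement that records how these two ingredients assemble into the two stated long exact sequences.
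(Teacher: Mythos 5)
Your proposal is correct and follows exactly the paper's argument: the corollary is stated there as an immediate consequence of the long exact sequence induced by the short exact sequence of Frobenius pairs in Subsection~\ref{ss:long-exact-sequence} together with the vanishing of $\mathbb{K}_{-1}$ from Corollary~\ref{c:vanishing-of-Kminus1}, spliced at degree $-1$ precisely as you describe. You also correctly identify the one substantive point hidden in the setup, namely that the equivalence $K^b(\proj A)/\tria(j_!(C))\simeq K^b(\proj B)$ up to direct summands is upgraded to a genuine equivalence via idempotent completeness, which follows from $\mathbb{K}_{-1}(C)=0$.
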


\subsection{The Grothendieck group}\label{s:the-rank} Let $k$ be a field and let $A$, $B$ and $C$ be finite-dimensional $k$-algebras admitting a recollement of the form~(\ref{eq:recollement-unbounded-der}).  { We can assume that  $A$, $B$ and $C$ are basic.}

Denote by $r(A)$ the number of isomorphism classes of simple $A$-modules, which equals the rank of the Grothendieck group $\mathbb{K}_0(A)$.

\begin{proposition}\label{p:rank-and-recollement}
Let $A$, $B$ and $C$ be
basic
finite-dimensional $k$-algebras admiting a recollement of the form (\ref{eq:recollement-unbounded-der}). Then $r(A)=r(B)+r(C)$. In particular, there is a short exact sequence
\[\xymatrix{0\ar[r] & \mathbb{K}_0(C)\ar[r] & \mathbb{K}_0(A)\ar[r] & \mathbb{K}_0(B)\ar[r] & 0.}\]
\end{proposition}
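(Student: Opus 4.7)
The plan is to combine the K-theoretic long exact sequence from Section~\ref{ss:long-exact-sequence} with an Euler pairing argument on $\mathbb{K}_0(A)$.

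First, the long exact sequence yields the right-exact sequence
\[
\mathbb{K}_0(C) \xrightarrow{j_!} \mathbb{K}_0(A) \xrightarrow{i^*} \mathbb{K}_0(B) \to 0.
\]
Since $\mathbb{K}_0(B) \cong \mathbb{Z}^{r(B)}$ is free abelian, the surjection $i^*$ admits an abelian-group section, so $\mathbb{K}_0(A) \cong \ker(i^*) \oplus \mathbb{Z}^{r(B)}$, and $\ker(i^*) = \mathrm{image}(j_!)$ by exactness. This yields $r(A) = \mathrm{rank}(\mathrm{image}(j_!)) + r(B) \le r(B) + r(C)$.

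For the reverse inequality, I would show that $j_! : \mathbb{K}_0(C) \to \mathbb{K}_0(A)$ is rationally injective. Since $C$ is basic, the indecomposable projectives $\{e_i C\}_{i=1}^{r(C)}$ form a $\mathbb{Z}$-basis of $\mathbb{K}_0(C)$, and the corresponding summands $T_i = j_!(e_i C)$ of the compact exceptional object $T = j_!(C) \in K^b(\proj A)$ are pairwise non-isomorphic since $j_!\colon K^b(\proj C) \to K^b(\proj A)$ is fully faithful by $j^! j_! \cong \mathrm{id}$. The key computation, coming from adjunction combined with this identity, is
\[
\chi_A(j_!(e_i C), j_!(e_j C)) = \chi_C(e_i C, j^! j_!(e_j C)) = \chi_C(e_i C, e_j C) = \dim_k e_j C e_i.
\]
Combining this with the orthogonality $i^* j_! = 0$ (giving $\chi_A(j_!(e_i C), i_*(e_\alpha B)) = 0$) and the perfect pairing between $\mathbb{K}_0(K^b(\proj A))$ and $\mathbb{K}_0(\cd^b(\mod A))$, I would conclude the linear independence of $\{[T_i]\}_i$ in $\mathbb{K}_0(A) \otimes \mathbb{Q}$.

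The claimed short exact sequence $0 \to \mathbb{K}_0(C) \to \mathbb{K}_0(A) \to \mathbb{K}_0(B) \to 0$ then follows from the rank equality $r(A) = r(B) + r(C)$ together with the freeness of all three Grothendieck groups. The hard part will be the injectivity step, since the Gram matrix in the pairing computation records the Cartan matrix of $C$, which need not be invertible in general; the resolution must exploit the exceptional compact structure of $T$, together with the basicness of $A$, $B$, $C$, to promote the a priori Cartan-controlled pairing data into genuine linear independence in $\mathbb{K}_0(A)$ rather than merely in the image of the Cartan map of $C$.
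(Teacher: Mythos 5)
Your first half is fine: the long exact sequence of Section~\ref{ss:long-exact-sequence} together with the vanishing $\mathbb{K}_{-1}(B)=0$ (Corollary~\ref{c:vanishing-of-Kminus1}) does give the right-exact sequence $\mathbb{K}_0(C)\rightarrow\mathbb{K}_0(A)\rightarrow\mathbb{K}_0(B)\rightarrow 0$ and hence $r(A)\leq r(B)+r(C)$. But the entire content of the proposition is the reverse inequality, i.e.\ the (rational) injectivity of $\mathbb{K}_0(j_!)$, and there your argument has a genuine gap which you yourself flag but do not close. The Gram matrix $\chi_A(j_!(e_iC),j_!(e_jC))=\dim_k e_jCe_i$ is the Cartan matrix of $C$, and this is singular for perfectly ordinary algebras (e.g.\ the radical-square-zero algebra of the quiver with two vertices and one arrow in each direction has Cartan matrix $\left(\begin{smallmatrix}1&1\\1&1\end{smallmatrix}\right)$), so pairing the classes $[j_!(e_iC)]$ against themselves cannot establish their linear independence. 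The natural repair --- pairing against $j_*$ of the simple $C$-modules to get an identity-like matrix via $j^!j_*\cong\mathrm{id}$ --- also does not go through in this generality, because $j_*$ need not send $\cd^b(\mod C)$ into $\cd^b(\mod A)$ (it does so only when $j^*(A)$ is compact over $C$, cf.\ Lemma~\ref{l:restriction-to-compact-for-row}), so $\chi_A(-,j_*(S_j))$ is not a well-defined functional on $\mathbb{K}_0(K^b(\proj A))$. The exactness of $0\to\mathbb{K}_0(C)\to\mathbb{K}_0(A)$ is likewise not free from the K-theoretic long exact sequence: the kernel of $\mathbb{K}_0(j_!)$ is the image of the boundary map $\mathbb{K}_1(B)\to\mathbb{K}_0(C)$, and for a general $\cd(\Mod)$-recollement there is no a priori reason for this to vanish (the paper only proves the splitting of the whole sequence under the stronger hypothesis of a $\cd^-(\Mod)$-recollement, in Theorem~\ref{t:splitting-K-groups}).

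The paper's own proof sidesteps K-theory entirely for this count: it observes that $j_!(C)$ is a basic presilting (indeed partial tilting) object of $K^b(\proj A)$, invokes silting reduction (Proposition~\ref{p:silting-reduction}) to complete it to a silting object $T$ with $\pi(T)=B$ in $K^b(\proj A)/\tria(j_!(C))\simeq K^b(\proj B)$, and then uses the Aihara--Iyama theorem that every silting object of $K^b(\proj A)$ has exactly $r(A)$ indecomposable summands. This is the mechanism that produces the exact count $r(A)=r(B)+r(C)$, after which the short exact sequence on $\mathbb{K}_0$ follows from the right-exact sequence and freeness, as you indicate. To salvage your approach you would need to supply a genuinely new argument for the injectivity of $\mathbb{K}_0(j_!)$; as written, the proposal proves only one of the two inequalities.
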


{We need some preparation.}
An object $X$ of $K^b(\proj A)$ is called a \emph{presilting object} if $\Hom(X,\Sigma^iX)=0$ for any $i>0$ and a \emph{silting object} if in addition $K^b(\proj A)=\tria(X)$.
The first statement of the following proposition is a special case of \cite[Theorem 4.8]{IyamaYang12}. The second statement follows from \cite[Theorem 4.1]{IyamaYang12}.

\begin{proposition}\label{p:silting-reduction}
Let $X$ be a basic presilting object of $K^b(\proj A)$. Then the quotient functor $\pi:K^b(\proj A)\rightarrow K^b(\proj A)/\tria(X)$ induces a one-to-one correspondence between the set of isomorphism classes of basic silting objects in $K^b(\proj A)$ containing $X$ as a direct summand and the set of isomorphism classes of basic silting objects in $K^b(\proj A)/\tria(X)$. Let $T$ be an element of the former set; then $\End_{K^b(\proj A)/\tria(X)}(\pi(T))$ is isomorphic to the quotient of $\End_{K^b(\proj A)}(T)$ by the ideal generated by the idempotent corresponding to $X$.
\end{proposition}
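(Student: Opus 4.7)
The plan is to deduce this from silting reduction in the sense of Iyama--Yang, by identifying the Verdier quotient $K^b(\proj A)/\tria(X)$ with a perpendicular subcategory of $K^b(\proj A)$ modulo $\add X$. Set
\[
\cz = \{Y \in K^b(\proj A) \mid \Hom(X,Y[i]) = 0 = \Hom(Y,X[i]) \text{ for all } i>0\}.
\]
The first step is to show that the composition $\cz \hookrightarrow K^b(\proj A) \xrightarrow{\pi} K^b(\proj A)/\tria(X)$ induces an equivalence $\cz/[\add X] \simeq K^b(\proj A)/\tria(X)$ (up to direct summands). Essential surjectivity rests on a Bongartz-style approximation: for any $Z \in K^b(\proj A)$, produce successively triangles with outer terms in $\tria(X)$ modifying $Z$ into an object of $\cz$ without changing its image under $\pi$. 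Fully faithfulness on $\cz$ reduces to a calculus-of-fractions argument, where the presilting property of $X$ is used to kill roofs whose third term lies in $\tria(X)$.

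Granting this equivalence, the bijection of the proposition proceeds as follows. For a basic silting object $T \in K^b(\proj A)$ having $X$ as a summand, write $T = X \oplus T''$; the presilting property of $T$ forces $T'' \in \cz$, and basicness prevents $T''$ from having any summand in $\add X$. Its class in $\cz/[\add X]$ corresponds, under the equivalence above, to $\pi(T) \cong \pi(T'')$, which is a basic silting object of the quotient: generation is inherited from $\tria(T)=K^b(\proj A)$, while the presilting vanishing transfers across the equivalence. Conversely, any basic silting $S$ of the quotient lifts to some $T'' \in \cz$ with no summand in $\add X$, and one checks that $T := X \oplus T''$ is presilting and generates $K^b(\proj A)$ (using that every object of $K^b(\proj A)$ is an extension of something in $\tria(X)$ by a preimage of an object in $\tria(S)$). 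Injectivity of the assignment $T \mapsto \pi(T)$ is immediate from faithfulness of the equivalence.

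For the endomorphism-ring claim, the equivalence identifies $\End_{K^b(\proj A)/\tria(X)}(\pi(T))$ with $\End_{K^b(\proj A)}(T'')$ modulo morphisms factoring through $\add X$. A direct block-matrix computation for $T = X \oplus T''$, using that $\Hom(X,T'')$ and $\Hom(T'',X)$ are concentrated in degree zero thanks to $T'' \in \cz$, then identifies this with $\End_{K^b(\proj A)}(T)/\bigl(\End(T)\, e_X \, \End(T)\bigr)$. The hard part will be the perpendicular-quotient identification itself, and in particular the essential-surjectivity step: iteratively orthogonalising an arbitrary object against $X$ on both sides requires simultaneous left and right approximations by $\add X$, and their interaction is the technical heart of silting reduction.
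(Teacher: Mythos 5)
The paper does not actually prove this proposition: it is stated as a special case of the silting reduction theorem of Iyama and Yang \cite{IyamaYang12}, which is cited as work in preparation. Your outline reconstructs precisely that argument --- the subcategory $\cz$ of objects with no positive-degree morphisms to or from $X$, the equivalence $\cz/[\add X]\simeq K^b(\proj A)/\tria(X)$, and the induced bijection on basic silting objects together with the block-matrix identification of endomorphism rings --- so you are following exactly the route the paper delegates to its reference, and in the present setting ($K^b(\proj A)$ for a finite-dimensional algebra is Hom-finite and Krull--Schmidt, and $X$ is basic, so $\add X$ is functorially finite wherever needed) the hypotheses of that theorem do hold.

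As a self-contained proof, however, the proposal is incomplete at exactly the points you flag, and these are not routine verifications but the substance of the cited theorem. First, the equivalence $\cz/[\add X]\simeq K^b(\proj A)/\tria(X)$ is asserted rather than established: essential surjectivity needs the finite orthogonalisation procedure (which terminates because $\Hom(X,Z[i])$ and $\Hom(Z,X[i])$ vanish for all but finitely many $i$ when $Z\in K^b(\proj A)$), and full faithfulness needs the nontrivial fact that a morphism between objects of $\cz$ factoring through $\tria(X)$ already factors through $\add X$. Second, the claim that ``the presilting vanishing transfers across the equivalence'' hides a genuine issue: the suspension on $\cz/[\add X]$ inherited from the Verdier quotient is not the restriction of $[1]$ but a modified shift $Z\mapsto Z\langle 1\rangle$ defined via a left $\add X$-approximation triangle, so relating $\Hom(\pi(T''),\pi(T'')\langle i\rangle)$ to $\Hom(T'',T''[i])$ requires an argument; this comparison is precisely what makes the converse direction (lifting a silting object of the quotient to a presilting $X\oplus T''$ upstairs) work. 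None of these steps fails, but until they are carried out the proposal is a reduction to \cite{IyamaYang12} rather than an independent proof --- which is, to be fair, all the paper itself offers.
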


\begin{proof}[Proof of Proposition~\ref{p:rank-and-recollement}]
Observe that $j_!(C)$ is a basic presilting object in $K^b(\proj A)$ (it is actually a partial tilting object). Let $\pi$ denote the composition of the quotient functor $\pi:K^b(\proj A)\rightarrow K^b(\proj A)/\tria(j_!(C))$ and the triangle equivalence (\ref{eq:first-row-compact}). By Proposition~\ref{p:silting-reduction}, there is a silting object $T$ in $K^b(\proj A)$ which contains $j_!(C)$ as a direct summand such that $\pi(T)=B$. Then $B=\End_{K^b(\proj B)}(B)=\End_{K^b(\proj A)}(T)/(e)$, where $e$ is the idempotent corresponding to the direct summand $X$ of $T$, and $(e)$ is the ideal generated by $e$. Hence $$r(B)=r(\End_{K^b(\proj A)}(T))-r(e\End_{K^b(\proj A)}(T)e)=r(\End_{K^b(\proj A)}(T))-r(C).$$ Since $r(\End_{K^b(\proj A)}(T))$ equals the number of indeomposable direct summands of $T$, which equals $r(A)$ by~\cite[Theorem 2.26]{AiharaIyama10}, it follows that
$r(A)=r(B)+r(C)$,
as desired.\end{proof}

\subsection{Decomposing K-groups along recollements}
The following theorem has been motivated and inspired by the results in \cite{ChenXi12a}, which it complements and strengthens in the case of finite-dimensional algebras. We are indebted to Changchang Xi for informing us about these results.

\begin{thm} \label{t:splitting-K-groups}
Let $A$, $B$ and $C$ be finite-dimensional $k$-algebras admitting a $\mathcal D^-({\rm Mod})$-recollement of the form (\ref{eq:recollement-bounded-above}). {Then there are isomorphisms $\mathbb{K}_i(A)\cong \mathbb{K}_i(B)\oplus \mathbb{K}_i(C)$ for $i\in\mathbb{Z}$}.
\end{thm}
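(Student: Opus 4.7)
The strategy is to split the long exact sequence of K-groups associated with the short exact sequence of Frobenius pairs
\[
(\cc^{-,c}(\proj C),\cc^{-,c}_0(\proj C))\longrightarrow (\cc^{-,c}(\proj A),\cc^{-,c}_0(\proj A))\longrightarrow (\cc^{-,c}(\proj B),\cc^{-,c}_0(\proj B))
\]
constructed in Subsection~\ref{ss:long-exact-sequence}. Concretely, I will produce a K-theoretic section of the induced map $\mathbb{K}_i(A)\to\mathbb{K}_i(B)$ for every $i\in\mathbb{Z}$; this forces all connecting homomorphisms in the associated long exact sequence of Theorem~\ref{t:long-exact-seq-of-k-groups} to vanish, so the sequence collapses into split short exact sequences
\[
0\longrightarrow\mathbb{K}_i(C)\longrightarrow\mathbb{K}_i(A)\longrightarrow\mathbb{K}_i(B)\longrightarrow 0,
\]
which yields the claimed direct sum decomposition.

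The natural candidate for the section is induced by $i_*$. By Proposition~\ref{p:recollement-restricting-to-d-Mod}, the hypothesis of a $\cd^-(\Mod)$-recollement is equivalent to $i_*(B)\in K^b(\proj A)$, so $i_*$ restricts to a triangle functor $K^b(\proj B)\to K^b(\proj A)$. The first, and crucial, step will be to promote this restricted functor to a functor of Frobenius pairs
\[
F\colon (\cc^{-,c}(\proj B),\cc^{-,c}_0(\proj B))\longrightarrow (\cc^{-,c}(\proj A),\cc^{-,c}_0(\proj A))
\]
of the form $?\otimes_B Z$, where $Z$ is a right bounded complex of $B$-$A$-bimodules whose underlying right $A$-complex lies in $\cc^b(\proj A)$ and is quasi-isomorphic to $i_*(B)=Y^{\tr_B}$. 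Such a $Z$ would be obtained by applying a bar-type replacement to $Y^{\tr_B}$ in the category of $B$-$A$-bimodule complexes, using Lemma~\ref{l:finite-sides} to supply a bimodule model with the required finiteness and projectivity properties, and then truncating.

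The second step is to verify that the composition $G\circ F$, where $G=?\otimes_A Y$ is the Frobenius-pair functor modelling $i^*$ in Subsection~\ref{ss:long-exact-sequence}, is naturally isomorphic to the identity at the level of the associated triangulated categories; this is a direct translation of the recollement isomorphism $i^*\circ i_*\cong\mathrm{id}_{\cd(\Mod B)}$. It follows that $\mathbb{K}_i(G)\circ\mathbb{K}_i(F)=\mathrm{id}_{\mathbb{K}_i(B)}$, which is precisely the desired section. The principal obstacle I anticipate is the chain-level construction of $Z$: the $B$-action on any representative of $i_*(B)$ chosen in $\cc^b(\proj A)$ exists \emph{a priori} only up to homotopy, and rigidifying it into an honest chain-level bimodule structure, while preserving projectivity of the right $A$-components, requires some care, for instance via a bar construction or a DG-enhancement argument. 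Once $F$ has been constructed, the rest of the proof is formal.
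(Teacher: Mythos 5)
Your proposal is correct and sits inside the same framework as the paper's proof: the short exact sequence of Frobenius pairs from Subsection~\ref{ss:long-exact-sequence}, Schlichting's localisation theorem, and a splitting of the resulting long exact sequence obtained by modelling one of the recollement functors at the level of Frobenius pairs. The one genuine difference is where you split. The paper constructs a retraction of $\mathbb{K}_i(C)\to\mathbb{K}_i(A)$ by modelling $j^*$ as $?\otimes_A X'$ for a right bounded complex $X'$ of finitely generated projective $A$-$C$-bimodules quasi-isomorphic to $X^{\tr_A}$ (the required compactness $X^{\tr_A}\in K^b(\proj C)$ being deduced from $i_*(B)\in K^b(\proj A)$ via Lemma~\ref{l:restriction-to-compact-for-row}) and then uses $j^*j_!\cong\mathrm{id}$; you instead construct a section of $\mathbb{K}_i(A)\to\mathbb{K}_i(B)$ by modelling $i_*$ and using $i^*i_*\cong\mathrm{id}$. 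Both splittings are available under the hypothesis, and either one kills all connecting maps and splits the sequence, so your route is equally valid and no simpler or harder. Two remarks on your execution. First, the obstacle you anticipate (rigidifying a homotopy $B$-action on a compact right $A$-complex) does not actually arise: $i_*(B)\cong Y^{\tr_B}$ is already an honest complex of $B$-$A$-bimodules by Lemma~\ref{l:finite-sides}(a), its total cohomology is bounded and finite-dimensional by Lemma~\ref{l:finite-sides}(b), and hence it admits a right bounded resolution $Z$ by finitely generated projective $B$-$A$-bimodules --- precisely the device the paper applies to $X^{\tr_A}$ to produce $X'$; no bar construction or DG-enhancement is needed. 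Second, you should not insist that the underlying right $A$-complex of $Z$ lie in $\cc^b(\proj A)$; truncating a bimodule resolution would destroy either projectivity or the quasi-isomorphism type. It suffices, and is automatic from compactness of $i_*(B)$, that it lie in $\cc^{-,c}(\proj A)$, which is all the target Frobenius pair requires.
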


\begin{proof}
By Proposition~\ref{p:lifting-recollement}, we may view the given recollement as a recollement restricted from a $\cd(\Mod)$-recollement.  As in Section~\ref{ss:long-exact-sequence}, consider the short exact sequence of Frobenius pairs
\[\xymatrix{(\cc^{-,c}(\proj C),\cc^{-,c}_0(\proj C))\ar[r]^\iota& (\cc^{-,c}(\proj A),\cc^{-,c}_0(\proj A))\ar[r]^\pi & (\cc^{-,c}(\proj B),\cc^{-,c}_0(\proj B))},\]
which induces a short exact sequence of triangulated categories (recall that the canonical embedding $K^b(\proj)\rightarrow K^{-,c}(\proj)$ is a triangle equivalence)
\[\xymatrix{K^{-,c}(\proj C)\ar[r]^{j_!} &K^{-,c}(\proj A)\ar[r]^{i^*}&K^{-,c}(\proj B)}\]
and a long exact sequence of K-groups
\begin{eqnarray}\label{eq:long-exact-sequence}\xymatrix@R=0.5pc@C=1.5pc{\ldots\ar[r] &\mathbb{K}_{i+1}(B)\ar[r]^{\delta_{i+1}}& \mathbb{K}_i(C)\ar[r]^{\mathbb{K}_i(\iota)} &\mathbb{K}_i(A)\ar[r]^{\mathbb{K}_i(\pi)} & \mathbb{K}_i(B)
\ar[r]^{\delta_i} &\mathbb{K}_{i-1}(C)\ar[r]^{\mathbb{K}_{i-1}(\iota)} &\mathbb{K}_{i-1}(A)\ar[r]&\ldots}\end{eqnarray}

Recall from Section~\ref{ss:long-exact-sequence} that there is a right bounded complex of finitely generated $C$-$A$-bimocules $X$ such that $j_!=?\lten_C X$. It follows that  $j^*=?\lten_A X^{\tr}$ (see Lemma~\ref{l:finite-sides}). By Proposition~\ref{p:recollement-restricting-to-d-Mod}, $i_*(B)\in K^b(\proj A)$ holds. It follows from Lemma~\ref{l:restriction-to-compact-for-row} that $j^*(A)=X^{\tr}\in K^b(\proj C)$. So there is a right bounded complex of finitely generated projective $A$-$C$-bimodules $X'$ which is quasi-isomorphic to $X^{\tr}$ as a complex of bimodules; so $j^*\simeq ?\lten_A X'=?\ten_A X'$. Moreover, as a complex of $C$-modules $X'$ belongs to $\cc^{-,c}(\proj C)$. Therefore $?\ten_A X'$ defines a functor of Frobenius pairs $\kappa:(\cc^{-,c}(\proj A),\cc^{-,c}_0(\proj A))\rightarrow(\cc^{-,c}(\proj C),\cc^{-,c}_0(\proj C))$. The composition $\kappa\iota:(\cc^{-,c}(\proj C),\cc^{-,c}_0)(\proj C)\rightarrow(\cc^{-,c}(\proj C),\cc^{-,c}_0)(\proj C)$ induces the triangle functor $j^*j_!:K^{-,c}(\proj C)\rightarrow K^{-,c}(\proj C)$, which is equivalent to the identity. Thus by~\cite[Theorem 9]{Schlichting06}, $\mathbb{K}_i(\kappa)\circ\mathbb{K}_i(\iota)=\mathbb{K}_i(\kappa\iota):\mathbb{K}_i(C)\rightarrow\mathbb{K}_i(C)$ is an isomorphism. In particular, {$\mathbb{K}_i(\iota)$ is a split momomorphism for any $i\in\mathbb{Z}$}. We are done.
\end{proof}

The following corollary is a direct consequence of Theorem~\ref{t:splitting-K-groups}.
\begin{corollary} Let $k$ be a field and $A$ be a finite-dimensional $k$-algebra admitting a $\cd^-(\Mod)$-stratification with simple factors $A_1,\ldots,A_s$. Then there are isomorphisms of K-groups $\mathbb{K}_i(A)\cong\bigoplus_{j=1}^s\mathbb{K}_i(A_j)$ for $i\in\mathbb{Z}$.
\end{corollary}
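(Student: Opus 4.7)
The proof will proceed by straightforward induction on the number of internal nodes (equivalently, on the number of leaves) of the $\cd^-(\Mod)$-stratification tree of $A$, with Theorem~\ref{t:splitting-K-groups} providing the inductive step.

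For the base case, the stratification consists of a single node, namely $\cd^-(\Mod A)$ itself. This means $A$ is $\cd^-(\Mod)$-simple, so $s=1$ and $A_1 = A$; the isomorphism $\mathbb{K}_i(A)\cong \mathbb{K}_i(A_1)$ is a tautology.

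For the inductive step, suppose a $\cd^-(\Mod)$-stratification of $A$ has more than one leaf. Then its root corresponds to a non-trivial $\cd^-(\Mod)$-recollement
\[
\xymatrix@!=4pc{\cd^-(\Mod B) \ar[r] &\cd^-(\Mod A) \ar@<+1.5ex>[l]
\ar@<-1.5ex>[l] \ar[r] & \cd^-(\Mod C) \ar@<+1.5ex>[l]
\ar@<-1.5ex>[l] }
\]
whose two child subtrees are $\cd^-(\Mod)$-stratifications of $B$ and $C$, respectively, with strictly fewer leaves. By Theorem~\ref{t:splitting-K-groups} applied to this recollement, we obtain isomorphisms $\mathbb{K}_i(A)\cong \mathbb{K}_i(B)\oplus\mathbb{K}_i(C)$ for all $i\in\mathbb{Z}$. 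By the induction hypothesis applied to the two subtrees, $\mathbb{K}_i(B)$ (respectively $\mathbb{K}_i(C)$) decomposes as the direct sum of the $\mathbb{K}_i$ of the simple factors appearing as leaves below $B$ (respectively below $C$). Since the disjoint union of the leaves of the two subtrees is precisely the set $\{A_1,\ldots,A_s\}$ of simple factors of the original stratification, combining these decompositions yields the desired isomorphism $\mathbb{K}_i(A)\cong \bigoplus_{j=1}^s \mathbb{K}_i(A_j)$.

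There is no real obstacle here: the combinatorial structure of the binary tree immediately turns the single-step splitting of Theorem~\ref{t:splitting-K-groups} into the multi-step decomposition. The only thing worth noting is the finiteness of $s$, which is built into the definition of a stratification in Section~\ref{strat} (a full rooted binary tree with leaves indexing the simple factors is, by convention, finite), so the induction terminates after finitely many steps.
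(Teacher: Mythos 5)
Your proof is correct and follows essentially the same route as the paper, which simply declares the corollary a direct consequence of Theorem~\ref{t:splitting-K-groups}; your induction on the stratification tree is exactly the intended iteration of that theorem. The only detail you leave implicit is that the outer algebras $B$ and $C$ at each node are again finite-dimensional (needed to reapply Theorem~\ref{t:splitting-K-groups}), which follows from Proposition~\ref{p:lifting-recollement} together with Lemma~\ref{l:finite-sides}(b).
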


Prominent finite-dimensional algebras whose derived categories admit stratifications are quasi-hereditary algebras. Here, the factors in the stratification are derived categories of vector spaces over the endomorphism rings of the simple modules, i.e. over division rings. In particular, the K-theory of Schur algebras of algebraic groups and of blocks of the Bernstein--Gelfand--Gelfand category $\mathcal O$ of a semisimple complex Lie algebra
decomposes into a direct sum of as many copies of the K-theory of the ground field as there are simple modules. In a similar way, the K-theory of hereditary algebras and of
algebras of global dimension two can be decomposed.

\section{The derived Jordan--H\"older theorem}\label{JH}   When recollements of derived module
categories were studied first, around 1990, the question came up
whether a derived
Jordan--H\"older theorem holds true, that is, whether finite
stratifications exist and are unique in the sense that the simple
factors of any two stratifications (multiplicities counted) are the
same.

In this section, we  study only $\cd(\Mod)$-stratifications. Recently it has been shown that such a Jordan--H\"older theorem
holds true for hereditary artin algebras~\cite{AKL2}, for
piecewise hereditary algebras~\cite{AKL3}, and for symmetric
algebras satisfying certain homological condition~\cite{LY}.
It turns out, however, to be false in general, see~\cite{CX1}
and~\cite{CX2} for infinite-dimensional counterexamples.

\smallskip

We first show that, for algebras with a block decomposition, the validity of the
Jordan--H\"older  theorem reduces  to the \emph{blocks}, that is, to
the indecomposable ring direct summands. The number of blocks is a
derived invariant.  For a stratification $\cs$, we denote by
$\mathrm{SF}(\cs)$ the sequence of simple factors
of $\cs$. For two stratifications $\cs$ and $\cs'$, we say that
$\mathrm{SF}(\cs)$ and $\mathrm{SF}(\cs')$ are \emph{equivalent} and
denote by $\mathrm{SF}(\cs)\sim\mathrm{SF}(\cs')$ if the two sequences
$\mathrm{SF}(\cs)$ and $\mathrm{SF}(\cs')$ are the same up to
triangle equivalence and reordering of their elements.

\begin{lemma}\label{l:JH-for-derived-semisimple-alg}
Let $A$ be a $k$-algebra with a block decomposition
$A=A_1\oplus\ldots\oplus A_s$. Then the Jordan--H\"older theorem
holds true $A$ if and only if it holds true for each $A_i$ (for any
choice of derived category). Moreover, if $\cs,\cs_1,\ldots,\cs_s$
are stratifications of $\cd(\Mod A),\cd(\Mod
A_1),\ldots,\cd(\Mod A_s)$, respectively, then $\mathrm{SF}(\cs)$ is equivalent to
the sequence $(\mathrm{SF}(\cs_1),\ldots,\mathrm{SF}(\cs_s))$.
\end{lemma}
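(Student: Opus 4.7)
The plan is to relate stratifications of $\cd(\Mod A)$ to those of its block components via Lemma~\ref{l:commutating-direct-sum-decomposition-and-recollements}, using two preliminary observations. First, any derived simple $k$-algebra is indecomposable, since a nontrivial direct product decomposition of a ring gives a nontrivial recollement (Subsection~\ref{dersimple}). Composing the full embeddings arising from the recollements along any root-to-leaf path in a stratification embeds the corresponding simple factor fully faithfully into $\cd(\Mod A) = \bigoplus_i \cd(\Mod A_i)$, and indecomposability forces the image to lie in exactly one block. Secondly, the direct sum decomposition $\cd(\Mod A) = \bigoplus_{i=1}^{s}\cd(\Mod A_i)$ is itself realised by a nested sequence of $s-1$ recollements, induced by the stratifying idempotents corresponding to the blocks.

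For the forward direction, together with the \emph{moreover} statement, let $\cs$ be a stratification of $\cd(\Mod A)$, and induct on the number of leaves. The base case consists of a single leaf, which forces $A$ to be derived simple, hence indecomposable, so $s=1$. For the inductive step, Lemma~\ref{l:commutating-direct-sum-decomposition-and-recollements} applied to the root recollement $\cd(\Mod B)\to\cd(\Mod A)\to\cd(\Mod C)$ yields, for each $i$, a (possibly trivial) restricted recollement of $\cc_i=\cd(\Mod A_i)$ whose outer terms are derived module categories of rings $B'_i$ and $C'_i$ (sums of those blocks of $B$ and $C$ that fall inside $\cc_i$). Applying the induction hypothesis to the sub-stratifications of $\cd(\Mod B)$ and $\cd(\Mod C)$ produces stratifications of each block of $B$ and of $C$; grouping these according to the block of $A$ they inhabit, gluing via block-separating recollements inside $\cd(\Mod B'_i)$ and $\cd(\Mod C'_i)$, and attaching the results beneath the restricted root recollement (while collapsing any recollement with a zero outer term) gives a stratification $\cs_i$ of $\cc_i$. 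By indecomposability of the simple factors, each leaf of $\cs$ appears in precisely one $\cs_i$ with its correct multiplicity, whence $\mathrm{SF}(\cs)\sim(\mathrm{SF}(\cs_1),\ldots,\mathrm{SF}(\cs_s))$.

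Conversely, given stratifications $\cs_i$ of each $\cd(\Mod A_i)$, I would build a stratification of $\cd(\Mod A)$ by prepending the $s-1$ nested block-separating recollements and grafting $\cs_i$ at the $i$-th leaf; its multiset of simple factors is by construction the union of those of the $\cs_i$. Combining both directions gives the stated equivalence of the Jordan--H\"older property. The main technical obstacle will be the careful bookkeeping in the inductive step: the outer terms of a restricted root recollement are not individual blocks of $B$ and $C$ but sums of several such, so the block stratifications produced by the induction hypothesis have to be regrouped and threaded through extra block-separating recollements, and degenerate nodes pruned; a routine induction on tree depth confirms that this assembly yields a well-defined stratification preserving the multiset of simple factors.
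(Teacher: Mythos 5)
Your proposal is correct and follows essentially the same route as the paper: both directions rest on Lemma~\ref{l:commutating-direct-sum-decomposition-and-recollements} to decompose a stratification of $\cd(\Mod A)$ into block stratifications, and on gluing block stratifications via the block-separating recollements. You merely make explicit the inductive bookkeeping (regrouping the outer terms $B_i'$, $C_i'$ into blocks and pruning trivial nodes) that the paper's proof compresses into ``thus it follows that a stratification $\cs$ of $\cd(\Mod A)$ can be glued from stratifications $\cs_i$ of $\cd(\Mod A_i)$''.
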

\begin{proof} We only prove the lemma for $\cd(\Mod)$; similar
arguments work for other derived categories.

 For each
$i=1,\ldots,s$, let $\cs_i$ be a stratification of $\cd(\Mod A_i)$.
Then we can `glue' these $\cs_i$'s to obtain a stratification $\cs$
of $\cd(\Mod A)$, which satisfies $\mathrm{SF}(\cs)=(\mathrm{SF}(\cs_1),\ldots,\mathrm{SF}(\cs_s))$.
Now fix any $i$ and let $\cs'_i$ be another
stratification of $\cd(\Mod A_i)$, then glueing
$\cs_1,\ldots,\cs'_i,\ldots,\cs_s$ we obtain a stratification $\cs'$
of $\cd(\Mod A)$ with $\mathrm{SF}(\cs')=(\mathrm{SF}(\cs_1),\ldots,\mathrm{SF}(\cs_{i-1}),\mathrm{SF}(\cs'_i),
\mathrm{SF}(\cs_{i+1}),\ldots,\mathrm{SF}(\cs_s))$. If the
Jordan--H\"older theorem holds for $A$, then
$\mathrm{SF}(\cs)\sim\mathrm{SF}(\cs')$. It follows that
$\mathrm{SF}(\cs_i)\sim\mathrm{SF}(\cs'_i)$, namely, the
Jordan--H\"older theorem holds for $A_i$.

Conversely, by
Lemma~\ref{l:commutating-direct-sum-decomposition-and-recollements},
a recollement
$$\xymatrix@!=6pc{\cd(\Mod B) \ar[r]|{i_*=i_!} &\cd(\Mod A) \ar@<+2.5ex>[l]|{i^!}
\ar@<-2.5ex>[l]|{i^*} \ar[r]|{j^!=j^*} & \cd(\Mod
C)\ar@<+2.5ex>[l]|{j_*} \ar@<-2.5ex>[l]|{j_!} }$$ restricts to
recollements
$$\xymatrix@!=6pc{\cd(\Mod B_i) \ar[r]|{i_*=i_!} &\cd(\Mod A_i) \ar@<+2.5ex>[l]|{i^!}
\ar@<-2.5ex>[l]|{i^*} \ar[r]|{j^!=j^*} & \cd(\Mod
C_i)\ar@<+2.5ex>[l]|{j_*} \ar@<-2.5ex>[l]|{j_!} }$$ such that
$B=\bigoplus_i B_i$ and $C=\bigoplus_i C_i$. Thus it follows that a
stratification $\cs$ of $\cd(\Mod A)$ can be glued from
stratifications $\cs_i$ of $\cd(\Mod A_i)$. In particular,
$\mathrm{SF}(\cs)=(\mathrm{SF}(\cs_1),\ldots,\mathrm{SF}(\cs_s))$. Let $\cs'$ be
another stratification for $\cd(\Mod A)$, then there are
stratifications $\cs'_i$ of $\cd(\Mod A_i)$ such that
$\mathrm{SF}(\cs')=(\mathrm{SF}(\cs'_1),\ldots,\mathrm{SF}(\cs'_s))$. If the
Jordan--H\"older theorem holds true for all $A_i$, then
$\mathrm{SF}(\cs_i)\sim\mathrm{SF}(\cs'_i)$ holds for all $i$. It
follows that $\mathrm{SF}(\cs)\sim\mathrm{SF}(\cs')$, namely, the
Jordan--H\"older theorem holds true for $A$.
\end{proof}

Since the derived Jordan--H\"older theorem holds true for derived simple algebras, we have
\begin{corollary}
Let $A$ be a $k$-algebra with a block decomposition
$A=A_1\oplus\ldots\oplus A_s$. If $A_1,\ldots,A_s$ are derived simple, then the derived Jordan--H\"older theorem holds true for $A$.
\end{corollary}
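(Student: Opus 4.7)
The plan is to apply Lemma~\ref{l:JH-for-derived-semisimple-alg} directly. First I would observe that each derived simple block $A_i$ admits a unique $\cd(\Mod)$-stratification, namely the trivial one consisting of the single node $\cd(\Mod A_i)$ which is simultaneously root and leaf. Indeed, by the very definition of derived simplicity, there is no non-trivial $\cd(\Mod)$-recollement of $\cd(\Mod A_i)$, so the root node cannot have two children; hence the only rooted binary tree satisfying the conditions of a stratification is the trivial one, and its sequence of simple factors is just $(\cd(\Mod A_i))$. In particular, any two $\cd(\Mod)$-stratifications $\cs_i$ and $\cs'_i$ of $\cd(\Mod A_i)$ satisfy $\mathrm{SF}(\cs_i)\sim\mathrm{SF}(\cs'_i)$ trivially, so the derived Jordan--H\"older theorem holds for each $A_i$.

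Then, applying the converse direction of Lemma~\ref{l:JH-for-derived-semisimple-alg}, the derived Jordan--H\"older theorem passes from the blocks $A_1,\ldots,A_s$ to the whole algebra $A$, which is exactly what we want. There is no real obstacle here: the corollary is essentially a formal consequence of the lemma together with the tautology that derived simplicity forces uniqueness of the stratification.
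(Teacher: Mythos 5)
Your argument is correct and is exactly the paper's: the paper derives this corollary by noting that the derived Jordan--H\"older theorem holds trivially for derived simple algebras (the only stratification being the one-node tree) and then invoking the block-reduction Lemma~\ref{l:JH-for-derived-semisimple-alg}. Nothing to add.
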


Since noetherian rings always admit a block decomposition,
Theorem~\ref{thm:derivedsimplicity-commutativering} yields the following.

\begin{cor} The derived Jordan--H\"older theorem holds true for commutative noetherian rings.
\end{cor}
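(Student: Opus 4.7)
The plan is to deduce this corollary directly from Theorem~\ref{thm:derivedsimplicity-commutativering} combined with the preceding corollary on block decompositions. So the task reduces to verifying that every commutative noetherian ring admits a finite block decomposition into indecomposable rings, each of which is itself commutative and noetherian.

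First I would establish the block decomposition. Given a commutative noetherian ring $R$, I need a complete set $e_1,\ldots,e_s$ of primitive orthogonal idempotents summing to $1$. The standard argument is this: suppose there were an infinite family of pairwise orthogonal nonzero idempotents $\{f_i\}_{i\ge 1}$; then the ideals $I_n = (f_1)+\cdots+(f_n)$ would form a strictly ascending chain, contradicting the noetherian hypothesis. Hence there are only finitely many pairwise orthogonal idempotents, and by induction on any refinement one obtains the desired finite primitive decomposition $1 = e_1+\cdots+e_s$. (Equivalently, $\mathrm{Spec}(R)$ has only finitely many connected components.) This gives $R \cong e_1R \times \cdots \times e_sR$, where each factor $R_i = e_iR$ is again commutative and noetherian, and indecomposable as a ring by primitivity of $e_i$.

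With the decomposition in hand, Theorem~\ref{thm:derivedsimplicity-commutativering} applies to each block $R_i$, yielding that every $R_i$ is $\cd(\Mod)$-simple. The preceding corollary then delivers the derived Jordan--H\"older theorem for $R$: any two $\cd(\Mod)$-stratifications of $R$ can be assembled from stratifications of the blocks via Lemma~\ref{l:JH-for-derived-semisimple-alg}, and since each $R_i$ is itself derived simple, the only stratification of $\cd(\Mod R_i)$ is the trivial one. Hence any stratification of $\cd(\Mod R)$ has simple factors equivalent (up to reordering) to $\cd(\Mod R_1),\ldots,\cd(\Mod R_s)$.

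There is no real obstacle: the proof is a one-line combination of two previously established facts. The only conceptual point worth noting is that the noetherian hypothesis enters solely through the finiteness of the block decomposition; the derived simplicity of each block comes \emph{for free} from Theorem~\ref{thm:derivedsimplicity-commutativering}, which did not require any noetherianness assumption.
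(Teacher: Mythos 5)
Your proposal is correct and follows exactly the paper's route: the paper derives this corollary in one line from the existence of a finite block decomposition for noetherian rings together with Theorem~\ref{thm:derivedsimplicity-commutativering} and the preceding corollary on block decompositions. You merely supply the (standard and correct) details of why the block decomposition is finite, so there is nothing to add or fix.
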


Let $k$ be a field. Next we restrict our attention to finite-dimensional algebras over $k$. For these algebras the finiteness of any stratification is an easy corollary of  Propostion~\ref{p:rank-and-recollement}. However, as we will show later, the uniqueness property in general fails for finite-dimensional algebras.

\begin{corollary} Let $k$ be a field and
let $A$ be a finite-dimensional $k$-algebra. Then any stratification of $\cd(\Mod A)$ is finite.
\end{corollary}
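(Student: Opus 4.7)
The plan is to show that the stratification tree has bounded depth, and hence, being a binary tree, is finite. The key tool is the rank function $r(R) = \mathrm{rk}\,\mathbb{K}_0(R)$ for a finite-dimensional algebra $R$, together with the additivity result in Proposition~\ref{p:rank-and-recollement}.

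First I would verify that every algebra occurring at a node of a $\cd(\Mod)$-stratification of $\cd(\Mod A)$ is finite-dimensional over $k$. This is proved by induction down the tree: the root is $A$, which is finite-dimensional by hypothesis; if the algebra $R$ at some internal node is finite-dimensional and its two children correspond to algebras $B$ and $C$ arising from a $\cd(\Mod)$-recollement of $\cd(\Mod R)$ by $\cd(\Mod B)$ and $\cd(\Mod C)$, then Lemma~\ref{l:finite-sides}(b) ensures that $B$ and $C$ are also finite-dimensional.

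With finite-dimensionality established at every node, Proposition~\ref{p:rank-and-recollement} applies at each internal node, yielding $r(R) = r(B) + r(C)$. Since the recollements in a stratification are non-trivial (otherwise one child is equivalent to the parent and no genuine decomposition takes place), both $B$ and $C$ are non-zero algebras, so $r(B) \geq 1$ and $r(C) \geq 1$. Consequently $r(B) < r(R)$ and $r(C) < r(R)$, that is, the rank strictly decreases from any node to each of its children.

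It follows that along any root-to-leaf path in the tree the ranks form a strictly decreasing sequence of positive integers, starting from $r(A)$. Hence every such path has length at most $r(A)$, so the tree has depth at most $r(A)$. A binary tree of depth at most $r(A)$ has at most $2^{r(A)+1}-1$ nodes, so in particular it is finite. There is no real obstacle beyond assembling these two ingredients; the only point that deserves attention is the inductive use of Lemma~\ref{l:finite-sides}(b) to keep Proposition~\ref{p:rank-and-recollement} applicable at every internal node of the tree.
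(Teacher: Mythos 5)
Your proposal is correct and follows exactly the route the paper intends: the paper states the corollary as "an easy corollary of Proposition~\ref{p:rank-and-recollement}" without writing out details, and your argument (finite-dimensionality propagating down the tree via Lemma~\ref{l:finite-sides}(b), strict decrease of the rank of $\mathbb{K}_0$ at each non-trivial recollement, hence bounded depth of the binary tree) is the natural completion of that remark. Nothing further is needed.
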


We give some more examples for which the derived Jordan--H\"older theorem is valid. All these algebras have
two non-isomorphic simple modules, so it follows from Proposition~\ref{p:rank-and-recollement} that the outer algebras of any non-trivial recollement are local and hence are derived simple. Therefore any non-trivial recollement is already a stratification.

\begin{ex}{\rm For the algebra in Example~\ref{e:three-recollements} or Example~\ref{e:three-recollements-and-derived-simple}, there is a unique ladder. This
 in particular verifies the derived
Jordan--H\"older theorem for these two algebras.

For the algebra in Example~\ref{ex:Dminussimple-neq-Dsimple}, up to equivalence there are precisely two non-trivial recollements, which have the same factors. In particular, the derived Jordan--H\"older theorem
holds true for this algebra.}
\end{ex}

The validity of the derived Jordan--H\"older theorem was proposed as an open question by Hong\-xing Chen and Changchang Xi. They informed us that \cite[Theorem 1.1]{ChenXi12} yields further examples of finite-dimensional algebras for which the derived Jordan--H\"older theorem fails. Here we present an example of a different flavour, which is generalised in \cite{LiuYang16}.

\begin{ex}\label{p:counterexampleJH}
{\rm
Let $A$ be the $k$-algebra given by the following quiver with relations
\[\xymatrix{1\ar@(ul,dl)_{\alpha}\ar@<.7ex>[r]^{\gamma} & 2\ar@<.7ex>[l]^{\beta}}, \qquad\beta\gamma\beta,\alpha^2,\gamma\alpha.\]
 We are going to show that the derived Jordan--H\"older theorem does not hold for $A$.

The composition series of the two indecomposable right projective
$A$-modules $P_1=e_1A$ and $P_2=e_2A$ are depicted as follows
\[P_1=\begin{smallmatrix} & 1 & \\[3pt] 1 & & 2\\[3pt] 2 && 1\\[3pt]1&& \end{smallmatrix},
\qquad P_2=\begin{smallmatrix} 2\\[3pt] 1\\[3pt] 2 \\[3pt] 1\end{smallmatrix} . \]

We list, without proof, some properties of the algebra $A$. It has infinite
global dimension. Its finitistic dimension is finite; this follows from the
recollements below, since local algebras have finite (in fact, zero)
finitistic dimension, and therefore the middle term has so, too, thanks to \cite[Theorem 2]{Happel93}. $A$ is a
monomial algebra of wild representation type (see table W in \cite{Han02}).
Its opposite algebra is a standardly stratified algebra (in the sense of
\cite{ADL}; that is, its regular
representation has a filtration by standard modules, using
the order $1<2$).

Consider $e=e_1$. We have isomorphisms $A/Ae_1A=k\{e_2\}\cong k$ and
$e_1Ae_1=k\{e_1,\alpha,\beta\gamma,\alpha\beta\gamma\}\cong k\langle
x,y\rangle/(x^2,y^2,xy)$ of algebras. The algebra $e_1Ae_1$ is local and its regular module has a simple socle; thus the algebra is self-injective.
As a right $A$-module $A/Ae_1A$ is
isomorphic to the simple module at the vertex $2$ and admits the
following projective resolution
\[\xymatrix@C=1.2pc{\ldots\ar[r]&P_1\ar[r]^{\alpha}&P_1\ar[r]^{\alpha}&P_1\ar[r]^{\alpha}&P_1\ar[r]^{\gamma}&P_2\ar[r]&A/Ae_1A\ar[r]&0}.\]
It follows from Lemma~\ref{l:criterion-stratifying-ideal} that $Ae_1A$ is a stratifying ideal. Consequently,  $\cd(\Mod A)$ admits a recollement by
$\cd(\Mod A/Ae_1A)$ and $\cd(\Mod e_1Ae_1)$, i.e. by $\cd(\Mod k)$ and $\cd(\Mod k\langle
x,y\rangle/(x^2,y^2,xy))$.

The case for $e=e_2$ is similar. We have algebra isomorphisms
$A/Ae_2A=k\{e_1,\alpha\}\cong k[x]/(x^2)$ and $e_2Ae_2=k
\{e_2,\gamma\beta\}\cong k[x]/(x^2)$. As a right $A$-module
$A/Ae_2A$ has composition series
$\begin{smallmatrix} 1\\ 1\end{smallmatrix}$ and admits the
following projective resolution
\[\xymatrix{\ldots\ar[r]&P_2\oplus P_2\ar[r]^{\left(\begin{smallmatrix}\gamma\beta & \\ & \gamma\beta\end{smallmatrix}\right)}&P_2\oplus P_2\ar[r]^{\left(\begin{smallmatrix}\gamma\beta & \\ & \gamma\beta\end{smallmatrix}\right)}
&P_2\oplus
P_2\ar[r]^(0.6){(\beta,\alpha\beta)}&P_1\ar[r]&A/Ae_2A\ar[r]&0}.\]
It follows from Lemma~\ref{l:criterion-stratifying-ideal} that  $Ae_2A$ is a stratifying ideal. Consequently, $\cd(\Mod A)$ admits a recollement by
$\cd(\Mod A/Ae_2A)$ and $\cd(\Mod e_2Ae_2)$, i.e. by $\cd(\Mod k[x]/(x^2))$ and
$\cd(\Mod k[x]/(x^2))$.

{\em To summarise: \\
The category $\cd(\Mod A)$ admits two recollements: one  has  factors  $k$
and $k\langle x,y\rangle/(x^2,y^2,xy)$, while the other one has  factors $k[x]/(x^2)$ and $k[x]/(x^2)$. All these factors  are
local algebras, hence derived simple, and clearly pairwise not Morita (and thus also not derived) equivalent. This shows that the derived Jordan--H\"older theorem fails for $A$.}

In both recollements, the functor $i_{\ast}$ sends the respective algebra $B$ on the
left hand side to an $A$-module of infinite global dimension. Hence all criteria
given in section four fail and neither recollement
restricts to $K^b(\proj)$, $D^b(\mod)$ or $D^-(\Mod)$. In the second recollement the functor $j_!= ? \lten_{e_2Ae_2} e_2A$ does restrict to $D^b(\mod)$, since $e_2A$ is free of rank two over its endomorphism ring $e_2Ae_2$. Therefore, by Proposition
\ref{p:ladder-extension}(b), the second recollement can be
extended upwards and thus it is part of a non-trivial ladder of height $\geq 2$.
The first recollement, associated with $e_1$, cannot be extended upwards.
Neither the first nor the second recollement can be extended downwards, since
the criterion in Proposition~\ref{p:ladder-extension}(a) fails in both cases.

}
\end{ex}

\end{document}